\documentclass[12pt]{amsart}
\usepackage{pifont}
\usepackage{txfonts}
\usepackage{}
\usepackage{amsfonts}
\usepackage{graphicx}
\usepackage{epstopdf}
\usepackage{amsmath}
\usepackage{amsthm}
\usepackage{latexsym,bm}
\usepackage{amssymb}
\usepackage{esint}
\usepackage{enumitem}
\usepackage{indentfirst}
\usepackage{amscd}
\oddsidemargin0cm \evensidemargin0cm \textwidth16cm \textheight22cm
\topmargin -0.4in \makeatletter

\pagestyle{plain}\pagenumbering{arabic}

\vfuzz2pt 
\hfuzz2pt 
\newtheorem{thm}{Theorem}[section]
\newtheorem{cor}[thm]{Corollary}
\newtheorem{lem}[thm]{Lemma}
\newtheorem{prop}[thm]{Proposition}
\newtheorem{defn}[thm]{Definition}

\newtheorem{Remark}{Remark}
\numberwithin{equation}{section}
\numberwithin{Remark}{section}

\begin{document}

\title{A compactness theorem on Branson's $Q$-curvature equation}

\author{Gang Li$^\dag$}

\address{Gang Li, Beijing International Center for Mathematical Research, Peking University, Beijing, China}
\email{runxing3@gmail.com}

\begin{abstract}
Let $(M, g)$ be a closed Riemannian manifold of dimension $5$. Assume that $(M, g)$ is not conformally equivalent to the round sphere. If the scalar curvature $R_g\geq 0$ and the $Q$-curvature $Q_g\geq 0$ on $M$ with $Q_g(p)>0$ for some point $p\in M$, we prove that the set of metrics in the conformal class of $g$ with prescribed constant positive $Q$-curvature is compact in $C^{4, \alpha}$ for any $0 <\alpha < 1$. We also give some estimates for dimension $6$ and $7$.
\end{abstract}


\thanks{$^\dag$ Research supported by China Postdoctoral Science Foundation Grant 2014M550540.}
\maketitle

\section{Introduction}

On a manifold $(M^n, g)$ of dimension $n\geq 5$, the $Q$-curvature of Branson \cite{Branson} is defined by
\begin{align*}
Q_g=-\frac{2}{(n-2)^2}|Ric_g|^2+\frac{n^3-4n^2+16n-16}{8(n-1)^2(n-2)^2}R_g^2-\,\frac{1}{2(n-1)}\Delta_gR_g
\end{align*}
where $Ric_g$ is the Ricci curvature of $g$, $R_g$ is the scalar curvature of $g$ and $\Delta_g$ is the Laplacian operator with negative eigenvalues. The Paneitz operator \cite{Paneitz}, which is the linear operator in the conformal transformation formula of the $Q$-curvature, is defined as
\begin{align}\label{operatorP}
P_g=\Delta_g^2-\text{div}_g(a_nR_g g -b_nRic_g)\nabla_g + \frac{n-4}{2}Q_g,
\end{align}
with $a_n=\frac{(n-2)^2+4}{2(n-1)(n-2)}$ and $b_n=\frac{4}{n-2}$. In fact, under the conformal change $\tilde{g}=u^{\frac{4}{n-4}}g$ the transformation formula of the $Q$-curvature is given by
\begin{align*}
P_gu\,=\,\frac{n-4}{2}Q_{\tilde{g}}u^{\frac{n+4}{n-4}}.
\end{align*}
In comparison, the change of scalar curvature under the conformal change $\tilde{g}=u^{\frac{4}{n-2}}g$ satisfies
\begin{align*}
R_{\tilde{g}}=u^{-\frac{n+2}{n-2}}(-\frac{4(n-1)}{(n-2)})\Delta_g u\,+\,R_g u).
\end{align*}
\vskip0.2cm

Let $(M^n, g)$ be a closed Riemannian manifold of dimension $n\geq 5$. Assume that $R_g\geq 0$ and $Q_g\geq 0$ on $M$ with $Q_g$ not identically zero. For existence of solutions $u$ to the prescribed constant positive $Q$-curvature equation
\begin{align}\label{equation1}
P_gu\,=\,\frac{n-4}{2}\bar{Q}u^{\frac{n+4}{n-4}},
\end{align}
with $\bar{Q}=\frac{1}{8}n(n^2-4)$, one may refer to Qing-Raske \cite{Qing-Raske1}, Hebey-Robert \cite{Hebey-Robert}, Gursky-Malchiodi \cite{Gursky-Malchiodi}, Hang-Yang \cite{Hang-Yang}, Gursky-Hang-Lin \cite{Gursky-Hang-Lin}. Recently, based on a nice maximum principle, Gursky and Malchiodi proved that
\begin{thm}\label{thma}
(Gursky-Malchiodi \cite{Gursky-Malchiodi}) For a closed Riemannian manifold $(M^n, g)$ of dimension $n\geq 5$, if $R_g\geq 0$ and $Q_g\geq 0$ on $M$ with $Q_g$ not identically zero, then there is a conformal metric $h=u^{\frac{4}{n-4}}g$ with positive scalar curvature and constant $Q$-curvature $Q_h=\bar{Q}$.
\end{thm}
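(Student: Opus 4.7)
My plan is to establish existence through a subcritical perturbation combined with a dedicated maximum principle for the Paneitz operator under the sign hypotheses on $R_g$ and $Q_g$. For $1 < p < \frac{n+4}{n-4}$ I would consider the constrained minimization
\begin{equation*}
\lambda_p := \inf\Bigl\{ \int_M u P_g u \, dV_g \; : \; u \in H^2(M), \; \int_M |u|^{p+1}\, dV_g = 1 \Bigr\},
\end{equation*}
which by the subcritical Sobolev embedding produces a minimizer $u_p$ solving $P_g u_p = \lambda_p |u_p|^{p-1}u_p$. Before doing this, coercivity of the quadratic form $\int u P_g u$ on $H^2(M)$ must be checked; under $R_g\geq 0$, $Q_g\geq 0$ with $Q_g \not\equiv 0$, I expect this to follow from an integration-by-parts identity that reorganizes the cross term involving $a_n R_g g - b_n \mathrm{Ric}_g$ into manifestly nonnegative contributions plus a coercive $\frac{n-4}{2}Q_g$-piece.

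The technical heart is proving that $u_p > 0$. Because $P_g$ is fourth order, $|u_p|$ is not admissible, so a substitute for the strong maximum principle is needed. My approach would be to show that under $R_g\geq 0$ and $Q_g\geq 0$ (not identically zero), if $P_g v \geq 0$ with $v$ not identically zero, then $v > 0$. The natural route is to couple $P_g$ with the conformal Laplacian $L_g = -\frac{4(n-1)}{n-2}\Delta_g + R_g$, rewriting $P_g v$ so as to expose an auxiliary second-order quantity to which the classical maximum principle applies, with $R_g\geq 0$ controlling the sign and $Q_g \not\equiv 0$ upgrading nonnegativity to strict positivity. Applied to the Euler--Lagrange equation this gives $u_p > 0$.

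I would then pass to the critical limit $p \to \frac{n+4}{n-4}$ by blow-up analysis. Either $u_p$ converges smoothly to a positive critical solution or it concentrates, with the rescaled profiles converging to an Aubin--Talenti-type entire solution of $\Delta^2 U = c\, U^{(n+4)/(n-4)}$ on $\Real^n$. To exclude concentration, I would compute $\lambda_p$ against a test function built from a bubble inserted at a point $p_0$ with $Q_g(p_0)>0$; the local positivity of $Q_g$ produces a correction that keeps $\lambda_p$ strictly below the $Q$-Yamabe constant of the round sphere, thereby precluding bubbling. Strong convergence and rescaling then yield a smooth positive $u$ with $P_g u = \frac{n-4}{2}\bar{Q}\, u^{(n+4)/(n-4)}$. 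Finally, positivity of $R_h$ for $h = u^{4/(n-4)}g$ follows from the conformal transformation law of $R$ together with a further application of the maximum principle, using $R_g\geq 0$, $u>0$, and the structure of the Paneitz equation.

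The main obstacle is the fourth-order nature of $P_g$: neither the positivity of minimizers nor the strict energy inequality that rules out bubbling follow from standard Yamabe-type machinery, and both rest on exploiting the combined information $R_g\geq 0$ and $Q_g\geq 0$. Producing the maximum principle in the precise form needed to extract positivity of $u_p$ is, in my view, the single step that decides whether the whole strategy succeeds.
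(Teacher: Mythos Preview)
The paper does not contain a proof of this theorem: it is stated as a result of Gursky--Malchiodi and cited without argument. There is therefore nothing in the present paper to compare your proposal against. For context, the original Gursky--Malchiodi proof does not proceed by subcritical approximation and blow-up analysis; after establishing the strong maximum principle (which you correctly identify as the crux), they run a non-local flow built from the Green's function of $P_g$ and show it converges to a constant $Q$-curvature metric. Your variational outline is a different route.

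On the substance of your proposal, the weakest step is the test-function computation you invoke to rule out concentration. The claim that ``local positivity of $Q_g$ produces a correction that keeps $\lambda_p$ strictly below the $Q$-Yamabe constant of the round sphere'' is not right as stated. When one expands the Paneitz energy on a concentrating bubble in conformal normal coordinates, the first correction to the sphere constant is governed by the Weyl tensor in high dimensions and by the constant term (``mass'') in the Green's function expansion in low dimensions $n=5,6,7$; the pointwise value of $Q_g$ at the center does not by itself furnish the needed strict inequality. Indeed, on the round sphere $Q_g$ is a positive constant everywhere, yet equality holds. So the mechanism that excludes bubbling must come from a Weyl or mass-type quantity, and to access the latter you would need exactly the positive-mass statement that Gursky--Malchiodi prove (and which this paper records as part of Theorem~\ref{thm1}). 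Without that input your compactness step has a genuine gap. The maximum-principle portion of your plan, by contrast, is essentially the Gursky--Malchiodi insight and is the right idea.
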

Moreover, they showed positivity of the Green's function of the Paneitz operator. Also, for $n=5,\,6,\,7$, they proved a version of positive mass theorem( see Theorem \ref{thm1}), which implies possibility to show compactness of the set of positive solutions to the prescribed constant $Q$-curvature problem in $C^{4, \alpha}(M)$ with $0< \alpha <1$.

For compactness results of solutions to the prescribed constant $Q$-curvature equation with different conditions, one would like to see Djadli-Hebey-Ledoux \cite{Djadli-Hebey-Ledoux}, Hebey-Robert \cite{Hebey-Robert}, Humbert-Raulot \cite{Humbert-Raulot}, Qing-Raske \cite{Qing-Raske}. In Djadli-Hebey-Ledoux \cite{Djadli-Hebey-Ledoux}, the authors studied the optimal Sobolev constant in the embedding $W^{2,2}\hookrightarrow L^{\frac{2n}{n-4}}$ where $P_g$ has constant coefficients. With some additional assumptions, they studied compactness of solutions to the related equations under $W^{2,2}$ bound and obtained existence of positive solutions for the corresponding equations. Under the assumption that the Paneitz operator and positive Green's function, Hebey-Robert \cite{Hebey-Robert} considers compactness of positive solutions with $W^{2,2}$ bound in locally conformally flat manifolds with positive scalar curvature. They showed when the Green's function satisfies a positive mass theorem, the conclusion holds. Later, Humbert-Raulot \cite{Humbert-Raulot} showed that the positive mass theorem holds automatically under the assumption in Hebey-Robert \cite{Hebey-Robert}.  In Qing-Raske \cite{Qing-Raske}, with the use of the developing map and moving plane method, they showed $L^{\infty}$ bound of solutions to the prescribed constant $Q$-curvature equation, for locally conformally flat manifolds with positive scalar curvature without additional assumptions. Combining Qing-Raske's result with positivity of Green's function, one can also easily get the full compactness result, see Theorem \ref{thmconformally_flat}.

In this notes we want to study compactness of solutions to the prescribed constant $Q$-curvature equation in Theorem \ref{thma}, following Schoen's outline of proof of compactness of solutions to the prescribed scalar curvature problem. For compactness results of solutions to the prescribed scalar curvature problem, following Schoen's original outline, one can see Schoen (\cite{Schoen}, \cite{Schoen1}, \cite{Schoen-Zhang}), Li-Zhu \cite{Li-Zhu}, Druet \cite{Druet}, Chen-Lin \cite{Chen-Lin}, Li-Zhang (\cite{Li-Zhang}, \cite{Li-Zhang1}), Marques \cite{Marques}, Khuri-Marques-Schoen \cite{Khuri-Marques-Schoen}. For non-compactness results, see Brendle \cite{Brendle}, Brendle-Marques \cite{Brendle-Marques} and Wei-Zhao \cite{Wei-Zhao}. For compactness argument for the Nirenberg problem for a more general type conformal equation on the round sphere, see Jin-Li-Xiong \cite{Jin-Li-Xiong}. More precisely, we will follow the approach in Li-Zhu \cite{Li-Zhu} for solutions to constant $Q$-curvature problem in dimension $n=5$ under Gursky-Malchiodi's setting. For $n=6, \,7$, we give some estimates along this direction.

Our main theorem is

\begin{thm}\label{thm27}
Let $(M^n, g)$ be a closed Riemannian manifold of dimension $n=5$ with $R_g\geq 0$, and also $Q_g\geq 0$ with $Q_g(p)>0$ for some point $p\in M$. Assume that $(M, g)$ is not conformal equivalent to the round sphere. Then there exists $C>0$ depending on $M$ and $g$ such that for any positive solution to $(\ref{equation1})$,
\begin{align*}
C^{-1}\leq u \leq C,
\end{align*}
and for any $0<\alpha <1$, there exists $C'>0$ depending on $M$, $g$, $\alpha$ and $n$ such that
\begin{align*}
\|u\|_{C^{4, \alpha}}\leq C'.
\end{align*}
\end{thm}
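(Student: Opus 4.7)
The strategy is a blow-up analysis in the spirit of Schoen and Li-Zhu for the scalar curvature problem, adapted to the fourth-order Paneitz operator $P_g$, and culminating in a contradiction with the Paneitz positive mass theorem of Gursky-Malchiodi. Suppose, for contradiction, that there is a sequence of positive solutions $u_i$ of \eqref{equation1} with $M_i := \max_M u_i \to \infty$, attained at $x_i \to x_\infty$. Rescale by
\begin{align*}
v_i(y) \;=\; M_i^{-1}\, u_i\bigl(\exp_{x_i}(M_i^{-2/(n-4)} y)\bigr),
\end{align*}
so that $v_i(0)=1$, $v_i\le 1$, and the pulled-back equation converges in $C^{4,\alpha}_{loc}$ to the flat biharmonic Yamabe equation $\Delta^2 v = \frac{n-4}{2}\bar{Q}\, v^{(n+4)/(n-4)}$ on $\mathbb{R}^n$. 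By the classification of positive entire solutions of this equation, the limit $v$ is a standard bubble.

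The next and most delicate step is to isolate a maximal collection of blow-up points and prove that each is \emph{isolated} (only one bubble forms at the given scale) and \emph{simple} (the spherical average $\bar u_i(r)$ has a unique critical point on the scale from the bubble to a fixed radius $r_0$). Where the Li-Zhu argument used the maximum principle for $-\Delta$, here one substitutes the strict maximum principle for $P_g$ established in \cite{Gursky-Malchiodi}, which is precisely where the hypotheses $R_g\ge 0$, $Q_g\ge 0$, $Q_g\not\equiv 0$ enter. Once isolated simple blow-up is in hand, the sharp pointwise estimate $u_i(x) \le C M_i^{-1} d(x,x_i)^{-(n-4)}$ on $B_{r_0}(x_i)$ follows, and a Harnack-type argument away from the bubble yields
\begin{align*}
M_i u_i \;\longrightarrow\; A\, G_g(\cdot,x_\infty) + b(\cdot) \qquad \text{in } C^4_{loc}(M\setminus\{x_\infty\}),
\end{align*}
with $G_g>0$ the Green's function of $P_g$ (positive by \cite{Gursky-Malchiodi}) and $b$ its regular part. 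Global uniqueness of the blow-up point is forced at this stage by the strict positivity of $G_g$.

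The contradiction is then produced through a Pohozaev-type identity for $P_g$ on small geodesic balls $B_r(x_i)$. Letting $i\to\infty$ and then $r\to 0$, in dimension $n=5$ the interior contribution vanishes thanks to the decay of the bubble, while the boundary integral collapses to a universal positive multiple of the mass $b(x_\infty)$. The positive mass theorem of Gursky-Malchiodi forces $b(x_\infty)>0$ unless $(M,g)$ is conformally round, contradicting the sign of the Pohozaev boundary term; hence $u\le C$. The reverse inequality $u\ge C^{-1}$ is obtained from a Harnack-type inequality for $P_g$ together with the lower bound on $\int_M u^{2n/(n-4)}\,dV_g$ imposed by the constant $Q$-curvature normalization, and $L^p$ plus Schauder estimates for $P_g$ then upgrade the $L^\infty$ control to $\|u\|_{C^{4,\alpha}}\le C'$.

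The main obstacle is the isolated-simple blow-up step: since $P_g$ does not admit a direct pointwise comparison principle, the monotonicity of $\bar u_i(r)$ and the sharp decay estimate must be extracted from integral identities coupled with the Gursky-Malchiodi maximum principle, rather than from a classical Hopf-type argument. This is also the reason the present theorem is restricted to $n=5$: in $n=6,7$ the interior contribution to the Pohozaev identity no longer vanishes cleanly in the limit, and only partial estimates along these lines are obtained, as announced in the introduction.
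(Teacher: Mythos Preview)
Your outline follows the paper's architecture (blow-up $\to$ isolated simple $\to$ sharp decay $\to$ Pohozaev vs.\ positive mass), but several steps are misstated in ways that matter.

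First, ``global uniqueness of the blow-up point is forced by the strict positivity of $G_g$'' is not correct, and the paper does not prove this. One allows finitely many isolated simple blow-up points $p_1,\dots,p_N$ (after ruling out bubble accumulation via a separate selection/distance argument, Proposition~\ref{propdistanceofsingularities}, which you omit). The limit is $M_ju_j\to \sum_{k=1}^N a_k G_g(p_k,\cdot)+b(\cdot)$, where $b$ solves $P_gb=0$ on all of $M$; the Gursky--Malchiodi strong maximum principle on the \emph{closed} manifold then forces $b\equiv 0$. The ``mass'' does not come from $b$: it appears only after passing to \emph{conformal normal coordinates} at $p_1$, where the Green's function has the expansion $G_h(p_1,\cdot)=c_n d_h^{4-n}+A+o(1)$ with $A>0$ (Theorem~\ref{thm1}). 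The constant term in the local expansion of the limit is then $B=a_1A+\sum_{k\ge 2}a_k\phi(p_1)^2G_g(p_k,p_1)$, and positivity of $G_g$ enters here to make the contributions from the \emph{other} blow-up points nonnegative, not to eliminate them. The Pohozaev identity (Proposition~\ref{propinequality}) forces the constant term to be exactly $0$, contradicting $B>0$.

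Second, the maximum principle you invoke for the isolated-simple step is not the closed-manifold principle of \cite{Gursky-Malchiodi} but a version for domains with boundary (Lemma~\ref{lemmaximumprinciple}), which requires checking positivity of the scalar curvature of the conformal metric determined by the barrier on $\partial\Omega$; this is where much of the technical work in Lemmas~\ref{lemupperboundestimates1} and Theorem~\ref{thmlowerbound} lies. Third, the lower bound $u\ge C^{-1}$ is not obtained from a Harnack inequality for $P_g$ but from the Green's representation $u(x)=\frac{n-4}{2}\bar Q\int_M G(x,y)u^{(n+4)/(n-4)}\,dV_g$ and a H\"older argument (Lemma~\ref{lemlowerbound1}), which shows directly that an upper bound on $u$ implies a lower bound. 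Finally, the passage from isolated to isolated simple (Proposition~\ref{propisolatedsingularpoints}) is itself a Pohozaev argument applied at an intermediate scale, and you should flag that it uses Proposition~\ref{propinequality} before the final contradiction.
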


We will perform a contradiction argument between local information from a Pohozaev type identity relating to the constant $Q$-curvature equation and a global discussion provided by the positive mass theorem in Gursky-Malchiodi \cite{Gursky-Malchiodi}( see Theorem \ref{thm1}). In comparison, for compactness of Yamabe problem, the application of positive mass theorem by Schoen and Yau \cite{Schoen-Yau} is crucial.

We give a direct modification of the maximum principle in Gursky-Malchiodi \cite{Gursky-Malchiodi} for manifolds with boundary, see Lemma \ref{lemmaximumprinciple}. It turns out to be very useful 
 in the proof of lower bound of the solutions away from the isolated blowup points( see Theorem \ref{thmlowerbound}) and it plays a role of local maximum principle in estimating upper bounds of solutions near blowup points( see Lemma \ref{lemupperboundestimates1}). To show upper bound of the solutions, we give upper bound estimates of a sequence of blowup solutions near isolated simple blowup points as in Li-Zhu \cite{Li-Zhu}, see in Section \ref{section6}. We are able to prove a Harnack type inequality near the isolated blowup points for $5\leq n \leq 9$, see Lemma \ref{lemH}. Besides the prescribed $Q$-curvature equation, nonnegativity of scalar curvature is also important in the analysis of the limit space of blowing-up argument. With the aid of the Pohozaev type identity, we then show that in dimension $n=5$, each isolated blowup point is in fact an isolated simple blowup point. After that, proof of Theorem \ref{thm27} is standard, except that more is involved for the blowing up limit in ruling out the bubble accumulations, see Proposition \ref{propdistanceofsingularities}.

 \begin{Remark}
In Marques \cite{Marques} and Li-Zhang \cite{Li-Zhang}, by using a classification theorem by Chen-Lin \cite{Chen-Lin} of solutions to the linearized equation of the constant scalar curvature equation on $\mathbb{R}^n$ vanishing at infinity, better estimates are obtained for error terms in the Pohozaev type identity. If such a classification theorem still holds for linearized equation of the constant $Q$-curvature equation on $\mathbb{R}^n$, then argument in \cite{Marques} still works and Proposition \ref{propinequality} still holds for $n=6$ and $n=7$, see in Remark \ref{remark6}. We should remark that in these two dimensions, estimates on the Weyl tensor at the blowup points are not necessary. Once Proposition \ref{propinequality} holds, Proposition \ref{propisolatedsingularpoints} and Proposition \ref{propdistanceofsingularities} hold for $n=6$ and $n=7$ automatically. That leads to the compactness result Theorem \ref{thm27} for $n=6$ and $n=7$. But we are not able to show such a classification so far.
\end{Remark}

To end the introduction, we introduce definition of isolated blowup points and isolated simple blowup points.

\begin{defn}
 Let $g_j$ be a sequence of Riemannian metric on a domain $\Omega \subseteq M$. Let $\{u_j\}_j$ be a sequence of positive solutions to $(\ref{equation1})$ under the background metric $g_j$ in $\Omega$. We call a point $\bar{x}\in \Omega$ an {\it isolated blowup point} of $\{u_j\}$ if there exist $\bar{C}>0$, $0< \delta < dist_{g_j}(\bar{x}, \partial \Omega)$ and $x_j \to \bar{x}$ as a local maximum of $u_j$ with $u_j(x_j)\to \infty$ satisfying
\begin{align}
&B^{g_j}_{\delta}(\bar{x}) \subseteq \Omega,\\
&\label{boundground}u_j(x)\leq \bar{C} d_{g_j}(x, x_j)^{\frac{4-n}{2}},\,\,\,\,\text{for}\,\,d_{g_j}(x, x_j)\leq \delta,
\end{align}
where $B^{g_j}_{\delta}$ is the $\delta$-geodesic ball with respect to the metric $g_j$, and $d_{g_j}(x, x_j)$ is the geodesic distance between $x$ and $x_j$ with respect to the metric $g_j$.
\end{defn}
For an isolated blowup point $x_j\to \bar{x}$ of $u_j$, we define
\begin{align*}
\bar{u}_j(r)=\frac{1}{|\partial B^{g_j}_r(x_j)|}\int_{\partial B^{g_j}_r(x_j)}u_j ds_{g_j},\,\,0<r<\delta,
\end{align*}
and
\begin{align*}
\hat{u}_j(r)=r^{\frac{n-4}{2}}\bar{u}_j(r),\,\,0<r<\delta,
\end{align*}
with $B^{g_j}_r(x_j)$ that $r$-geodesic ball centered at $x_j$, $ds_{g_j}$ the area element and $|\partial B^{g_j}_r(x_j)|$ volume of $B^{g_j}_r(x_j)$.
\begin{defn}\label{def02}
We call $x_j\to \bar{x}$ an isolated simple blowup point if it is an isolated blowup point and there exists $0<\delta_1<\delta$ independent of $j$ such that $\hat{u}_j$ has precisely one critical point in $(0, \delta_1)$, for $j$ large.
\end{defn}

{\bf Acknowledgements.} The author would like to thank Doctor Jingang Xiong and Professor Lei Zhang for helpful discussion when he read \cite{Li-Zhang} for other purpose. The author is grateful to Professor Chiun-Chuan Chen for helpful discussion for understanding \cite{Chen-Lin}.

\section{The Green's representation}

In this section, we assume that $(M^n, g)$ is a closed Riemannian manifold of dimension $n\geq 5$ with $R_g\geq 0$, and also $Q_g\geq 0$ with $Q_g(p)>0$ for some point $p\in M$.
\begin{thm}\label{thm1}
(Gursky-Malchiodi, \cite{Gursky-Malchiodi}) For a closed Riemannian manifold $(M^n, g)$ of dimension $n\geq 5$, if $R_g\geq 0$, $Q_g\geq 0$ on $M$ and also $Q_g(p)>0$ for some point $p\in M$, then the following holds:
 \begin{itemize}
\item The scalar curvature $R_g>0$ in $M$;
\item  the Paneitz operator $P_g$ is in fact positive and the Green's function $G$ of $P_g$ is positive where $G: M\times M - \{(q, q), q\in M\}\,\to\,\mathbb{R}$. Also, if $u\in C^4(M)$ and $P_g u\geq 0$ on $M$, then either $u\equiv 0$ or $u>0$ on $M$;
\item for any metric $g_1$ in the conformal class of $g$, if $Q_{g_1}\geq 0$, then $R_{g_1}>0$;
\item for any distinct points $q_1, q_2 \in M$,
\begin{align}\label{expansion1}
G(q_1, q_2)=G(q_1,q_2)=c_nd_{g}(q_1,q_2)^{4-n}(1+f(q_1, q_2)),
\end{align}
with $c_n=\frac{1}{(n-2)(n-4)\omega_{n-1}}$, $\omega_{n-1}=|S^{n-1}|$, and $d_g(q_1, q_2)$ distance between $q_1$ and $q_2$. Here $f$ is bounded and $f\to 0$ as $d_g(q_1, q_2)\to 0$ and
\begin{align}\label{ineqGrp}
|\nabla^jf|\leq C_jd_g(q_1, q_2)^{1-j}
\end{align}
for $1 \leq j \leq 4$,
\item (positive mass theorem) when the dimension $n= 5,\,6, $ or $7$, for any point $q_1\in M$, let $x=(x^1, ..., x^n)$ be the conformal normal coordinates ( see \cite{Lee-Parker}) centered at $q_1$ and $h$ be the corresponding conformal metric. For $q_2$ close to $q_1$ the Green's function $G_h(q_2, q_1)$ of the Paneitz operator $P_h$ has the expansion
    \begin{align*}
    G_h(q_2, q_1)=c_nd_{h}(q_2,q_1)^{4-n}+\alpha + f(q_2)
    \end{align*}
    with a constant $\alpha \geq 0$ and $f$ satisfying $(\ref{ineqGrp})$ and $f(q_2)\to 0$ as $q_2\to q_1$; moreover, $\alpha=0$ if and only if $(M^n, g)$ is conformally equivalent to the round sphere.
\end{itemize}

\end{thm}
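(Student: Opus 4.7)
The plan is to establish the five conclusions in turn, reducing the fourth-order analysis to second-order maximum principles wherever possible; the positive mass assertion is the only genuinely hard step. For positivity of $R_g$ (bullet 1), I would rewrite the defining relation as
\begin{equation*}
-\Delta_g R_g \,=\, 2(n-1)Q_g + \tfrac{4(n-1)}{(n-2)^2}|\mathrm{Ric}_g|^2 - \tfrac{n^3-4n^2+16n-16}{4(n-1)(n-2)^2}R_g^2.
\end{equation*}
Using $Q_g \geq 0$ and the trace bound $|\mathrm{Ric}_g|^2 \geq R_g^2/n$, this gives $(-\Delta_g + A_n R_g)R_g \geq 0$ for a dimensional constant $A_n$; since $R_g \geq 0$ is bounded on the compact $M$, the strong maximum principle applied to $-\Delta_g + A_n R_g$ forces either $R_g > 0$ everywhere or $R_g \equiv 0$, and the latter collapses the $Q$-formula to $Q_g \leq 0$, contradicting $Q_g(p) > 0$.

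For bullet 2, the heart of the matter is a maximum principle for the fourth-order $P_g$. I would introduce $v := -\Delta_g u + \frac{n-2}{2(n-1)} R_g u$ and expand $P_g u = -\Delta_g v + c_1(n) R_g v + \mathcal{E}_g(u,\nabla u)$, where $\mathcal{E}_g$ gathers Ricci- and $Q$-dependent corrections whose sign at a non-positive interior extremum of $u$ is controllable thanks to $R_g > 0$ and $Q_g \geq 0$. The hypothesis $P_g u \geq 0$ with $\min u \leq 0$ is then ruled out by analysing the coupled second-order system in $(u,v)$ and applying the strong maximum principle to each piece. Coercivity of $P_g$ follows from the Fredholm alternative, and positivity of $G$ comes from $P_g G(\cdot,q)\geq 0$ off the diagonal together with the Newtonian singularity at $q$ acting as a positive source. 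Bullet 3 is a corollary: for conformal $g_1 = u^{4/(n-4)}g$ with $Q_{g_1} \geq 0$, the inequality $P_g u \geq 0$ and the scalar-curvature transformation law $R_{g_1}$ vs.\ $L_g \varphi$ (with $\varphi = u^{(n-2)/(n-4)}$) yield $R_{g_1} > 0$ via the same maximum-principle machinery.

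For bullet 4, I would work in $g$-normal coordinates at $q_1$ with the flat-space parametrix $\Phi(x) = c_n|x|^{4-n}$, the fundamental solution of $\Delta_0^2$ on $\mathbb{R}^n$. Near $q_1$ the metric expansion $g = \delta + O(|x|^2)$ makes $P_g - \Delta_0^2$ a perturbation of strictly lower order, so $\mathcal{R} := P_g\Phi - \delta_{q_1}$ is integrable; iterating a Neumann-type correction $G = \Phi + \Phi * \mathcal{R} + \Phi * \mathcal{R} * \mathcal{R} + \cdots$ and applying standard Schauder theory on $(M,g)$ yields $G = c_n d_g^{4-n}(1+f)$ with the claimed $C^4$ bounds, and symmetry $G(q_1,q_2) = G(q_2,q_1)$ is immediate from self-adjointness of $P_g$.

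The positive mass statement (bullet 5) is the main obstacle. Passing to conformal normal coordinates $h$ at $q_1$ eliminates the lower-order geometric terms in the Green's expansion up to order $n-4$, and the dimension restriction $n\in\{5,6,7\}$ is exactly what allows this elimination to leave a clean constant $\alpha$ plus vanishing remainder. To see $\alpha \geq 0$, I would conformally blow up at $q_1$: the punctured manifold $(M\setminus\{q_1\}, G_h^{4/(n-4)}h)$ is asymptotically flat, has non-negative $Q$-curvature and positive scalar curvature, and its fourth-order ADM-type mass is a positive dimensional multiple of $\alpha$. If $\alpha < 0$, truncate $G_h$ against a smooth cutoff at small scale $\varepsilon$ and evaluate the Paneitz energy $\int u\,P_g u\,dV_g$; integrations by parts across the cutoff sphere extract a boundary term driven by $\alpha$ whose wrong sign contradicts the coercivity proved in bullet 2. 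The rigidity $\alpha = 0 \iff (M,g)\cong S^n$ is the equality case of this test-function construction, a fourth-order Obata-type argument forcing the Weyl tensor and the next expansion coefficient to vanish simultaneously. The delicate step throughout is the explicit expansion of $P_g\Phi$ in conformal normal coordinates and the careful bookkeeping of the boundary terms, which is precisely where $n\leq 7$ becomes essential.
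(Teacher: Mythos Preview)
The paper does not prove this theorem at all: it is quoted from Gursky--Malchiodi \cite{Gursky-Malchiodi} and used as a black box. So there is no ``paper's own proof'' to compare against. That said, the paper does reproduce the Gursky--Malchiodi argument for the maximum principle in a boundary setting (Lemma~\ref{lemmaximumprinciple}), and that argument is quite different from your sketch for bullet~2.

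Your proposed route to the strong maximum principle for $P_g$---writing $v=-\Delta_g u+\tfrac{n-2}{2(n-1)}R_gu$ and claiming $P_gu=-\Delta_g v+c_1(n)R_gv+\mathcal{E}_g(u,\nabla u)$ with an error $\mathcal{E}_g$ depending only on $u$ and $\nabla u$---does not work as stated. The Paneitz operator does not factor through two copies of the conformal Laplacian; the cross terms coming from $\mathrm{div}_g(a_nR_g g-b_n\mathrm{Ric}_g)\nabla_g$ and the commutator of $\Delta_g$ with $R_g$ produce second-order contributions in $u$ that are not absorbed by $v$, and their sign at an interior minimum of $u$ is not controlled by the hypotheses $R_g>0$, $Q_g\geq 0$ alone. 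This is precisely why the Gursky--Malchiodi argument is nontrivial: instead of factoring, they run a continuity method along the path $u_\lambda=(1-\lambda)+\lambda u$, observe that $Q_{g_\lambda}\geq 0$ by linearity, and then use the \emph{second-order} scalar-curvature transformation law to propagate $R_{g_\lambda}>0$ and hence $u_\lambda>0$ along the whole path. The key insight is that the fourth-order information ($Q_{g_\lambda}\ge 0$) feeds back into a second-order differential inequality for $R_{g_\lambda}$, so one never needs a direct maximum principle for $P_g$ itself. Your sketches for bullets~1, 3, 4 are on the right track; bullet~5 is correctly identified as the hard part, and your outline there is roughly the Gursky--Malchiodi strategy, though the actual execution of the test-function computation and the rigidity case requires considerably more care than indicated.
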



Let $u\in C^{4, \alpha}(M)$ be a solution to the equation
\begin{align*}
P_gu=f\geq 0.
\end{align*}
Then we have the Green's representation
\begin{align*}
u(x)=\int_M G(x, y)f(y)dV_g(y),
\end{align*}
for $x\in M$.

Now let $u>0$ be a solution to the constant $Q$-curvature equation $(\ref{equation1})$.
Using the Green's representation,
\begin{align*}
u(x)=\frac{n-4}{2}\bar{Q} \int_M G(x, y)\,u^{\frac{n+4}{n-4}}(y)\,d V_g(y),
\end{align*}
we first show some basic estimates of the solution $u$.

\begin{lem}\label{lembounda}
For a closed Riemannian manifold $(M^n, g)$ of dimension $n\geq 5$ with $R_g>0$, $Q_g\geq 0$ on $M$ and $Q_g(p)>0$ for some point $p\in M$. Then there exists $C_1,\,C_2>0$ depending on $(M, g)$, so that for any solution $u$ to $(\ref{equation1})$, we have that
\begin{align*}
\inf_M u \leq C_1,\,\,\sup_M u \geq C_2.
\end{align*}
\end{lem}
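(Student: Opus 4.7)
\medskip

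\textbf{Proof proposal for Lemma \ref{lembounda}.}

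The plan is to treat the two inequalities by two different strategies, both exploiting the structural properties of the equation. For the upper bound on the infimum I will extract a uniform $L^1$ estimate by integrating the PDE. For the lower bound on the supremum I will evaluate the Green's representation at a maximum point and exploit the integrable singularity of $G$.

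For $\inf_M u \le C_1$, I integrate the identity
\begin{align*}
P_g u \;=\; \frac{n-4}{2}\,\bar Q\, u^{\frac{n+4}{n-4}}
\end{align*}
over $M$. Since $\Delta_g^2 u$ and $\mathrm{div}_g\bigl((a_nR_g g - b_n\mathrm{Ric}_g)\nabla_g u\bigr)$ are both divergences, they integrate to zero, which leaves
\begin{align*}
\int_M Q_g\, u\, dV_g \;=\; \bar Q\int_M u^{\frac{n+4}{n-4}}\, dV_g.
\end{align*}
Bounding $Q_g$ by $\|Q_g\|_{L^\infty(M)}$ and applying H\"older with conjugate exponents $\frac{n+4}{n-4}$ and $\frac{n+4}{8}$ gives
\begin{align*}
\bar Q\int_M u^{\frac{n+4}{n-4}}\, dV_g \;\le\; \|Q_g\|_{L^\infty}\,\mathrm{Vol}(M,g)^{\frac{8}{n+4}}\!\left(\int_M u^{\frac{n+4}{n-4}}\, dV_g\right)^{\!\frac{n-4}{n+4}}.
\end{align*}
Hence $\int_M u^{\frac{n+4}{n-4}}\, dV_g$ is bounded by a constant depending only on $(M,g)$, and another application of H\"older bounds $\int_M u\, dV_g$ as well. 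Since $\inf_M u\cdot \mathrm{Vol}(M,g)\le \int_M u\, dV_g$, this yields $\inf_M u\le C_1$.

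For $\sup_M u \ge C_2$, let $x_0\in M$ be a point where $u$ attains its maximum. Plugging $x=x_0$ into the Green's representation and using $u(y)\le \sup_M u$ in the integrand gives
\begin{align*}
\sup_M u \;=\; u(x_0) \;=\; \tfrac{n-4}{2}\,\bar Q\int_M G(x_0,y)\, u(y)^{\frac{n+4}{n-4}}\, dV_g(y) \;\le\; \tfrac{n-4}{2}\,\bar Q\,(\sup_M u)^{\frac{n+4}{n-4}} A,
\end{align*}
where $A:=\sup_{x\in M}\int_M G(x,y)\, dV_g(y)$. The expansion (\ref{expansion1}) shows that $G(x,y)\le C\, d_g(x,y)^{4-n}$, which is integrable in $y$ since $n-4<n$; together with positivity of $G$, this makes $A$ a finite constant depending only on $(M,g)$. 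Dividing by $\sup_M u$ (which is positive) gives $(\sup_M u)^{8/(n-4)}\ge 2/((n-4)\bar Q A)$, hence a uniform lower bound $C_2>0$.

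The potential pitfalls are essentially bookkeeping rather than substantive mathematics: one has to justify the vanishing of the boundary-free divergence integrals for the fourth-order operator $P_g$ (immediate from the form (\ref{operatorP})) and the finiteness of $A$ (immediate from the Green's function asymptotics in Theorem \ref{thm1}). The structure that makes the lemma work is that $\frac{n+4}{n-4}>1$, so the superlinear right-hand side is self-defeating both as a small-$u$ source (feeding Part 1 through H\"older) and as a large-$u$ source (feeding Part 2 through the Green's representation).
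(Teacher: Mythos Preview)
Your proof is correct. For the lower bound on $\sup_M u$ you use exactly the paper's argument: evaluate the Green's representation at a maximum point and use the integrability of $G$ coming from the expansion~(\ref{expansion1}).

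For the upper bound on $\inf_M u$, however, you take a genuinely different route. The paper argues symmetrically: it evaluates the Green's representation at a \emph{minimum} point $q$, uses $u(y)^{\frac{n+4}{n-4}}\ge u(q)^{\frac{n+4}{n-4}}$ inside the integral, and invokes the uniform lower bound $\int_M G(q,y)\,dV_g(y)\ge c>0$ (which follows from the pointwise lower bound in (\ref{polebound})) to obtain $u(q)\ge c\,u(q)^{\frac{n+4}{n-4}}$, hence $u(q)\le C_1$. Your approach instead integrates the equation over $M$, kills the divergence terms in $P_g$, and extracts the identity $\int_M Q_g u=\bar Q\int_M u^{\frac{n+4}{n-4}}$; H\"older then gives a uniform bound on $\int_M u^{\frac{n+4}{n-4}}$ and hence on $\int_M u$, which controls $\inf_M u$. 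The paper's argument is shorter and pleasingly symmetric between the two bounds, needing only the Green's function asymptotics. Your argument, on the other hand, produces as a byproduct a uniform a priori bound on $\int_M u^{\frac{n+4}{n-4}}$, which is a slightly stronger piece of information and does not require the Green's function at all (only $\|Q_g\|_{L^\infty}$).
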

\begin{proof}
Let $u(q)=\inf_M u$. Then by Green's representation,
\begin{align*}
u(q)&=\frac{(n-4)}{2}\bar{Q}\int_M\,G(q,y)\,u(y)^{\frac{n+4}{n-4}}\,dV_g(y)\\
&\geq u(q)^{\frac{n+4}{n-4}}*\frac{(n-4)}{2}\bar{Q}\int_M\,G(q,y)\,dV_g(y)\\
&\geq C_1^{-\frac{8}{n-4}} u(q)^{\frac{n+4}{n-4}}
\end{align*}
with $C_1$ independent of the solution $u$ and $q$, and the last inequality follows from $(\ref{expansion1})$. Therefore, the upper bound of $\inf_M u$ is established. Similar argument leads to lower bound of $\sup_M u$.

\end{proof}

Next we give an integral type inequality, which shows that if $u$ is bounded from above, then we get lower bound of $u$.
\begin{lem}\label{lemlowerbound1}
For a closed Riemannian manifold $(M^n, g)$ with dimension $n\geq 5$, $R_g>0$, and also $Q_g\geq 0$ with $Q_g(p)>0$ for some point $p\in M$. Then we have the inequality
\begin{align*}
\inf_M u \geq C (\int_M G(z, y)^p\,u(y)^{\frac{8}{n-4}\alpha p}\, d V_g(y))^{-\frac{q}{p}}
\end{align*}
where $p=\frac{n+4}{n-4} - a$, $\frac{1}{p}+\frac{1}{q}=1$, and $\alpha=\frac{(n-4)a}{8p}$, for any fixed number $\frac{4}{n-4}< a <\frac{8}{n-4}$, and $z$ is the maximum point of $u$. $C=C(a, g)>0$ is a constant.  In particular, uniform upper bound of $u$ implies uniform lower bound of $u$.
\end{lem}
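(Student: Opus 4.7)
The plan is to combine the Green's representation
\[
 u(x)\,=\,\frac{n-4}{2}\bar Q\int_M G(x,y)\,u(y)^{(n+4)/(n-4)}\,dV_g(y)
\]
with a carefully chosen application of H\"older's inequality. Writing $\frac{n+4}{n-4}=p+a$ and decomposing the integrand as $[G(x,y)u(y)^{a/p}]\cdot u(y)^{\,(n+4)/(n-4)-a/p}$, H\"older with conjugate exponents $p$ and $q$ yields
\[
u(x)\,\leq\,C\,\Bigl(\int_M G(x,y)^p\,u(y)^{a}\,dV_g\Bigr)^{1/p}\Bigl(\int_M u(y)^{q((n+4)/(n-4)-a/p)}\,dV_g\Bigr)^{1/q}.
\]
A direct check gives $\tfrac{8}{n-4}\alpha p = a$, so the first integral is exactly the one appearing in the statement.

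The range $\frac{4}{n-4}<a<\frac{8}{n-4}$ plays a double role. On the one hand, $a>\frac{4}{n-4}$ is equivalent to $p<\frac{n}{n-4}$, which by the asymptotic $G(q_1,q_2)\sim c_n d(q_1,q_2)^{4-n}$ from Theorem~\ref{thm1} is precisely the threshold that makes $\int_M G(z,y)^p\,dV_g$ finite. On the other hand, $a<\frac{8}{n-4}$ is equivalent to $p>1$, so the conjugate exponent $q=p/(p-1)<\infty$ and H\"older is nontrivial. Evaluating the displayed bound at the maximum point $z$ and using $u\leq u(z)=\sup_M u$ on the second factor (together with $\mathrm{Vol}(M)^{1/q}$), one rearranges algebraically to isolate powers of $u(z)$ from the $L^p$-type integral and obtain a lower bound of the desired shape; combining this with the basic two-sided information $\inf_M u\leq C_1$ and $\sup_M u\geq C_2$ from Lemma~\ref{lembounda} then transfers the estimate into the stated lower bound on $\inf_M u$.

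The main technical obstacle is the bookkeeping of H\"older exponents: one must track how the relations $p+a=\frac{n+4}{n-4}$ and $\frac{1}{p}+\frac{1}{q}=1$ interact in order to arrive at the precise combination $\alpha=\frac{(n-4)a}{8p}$ and the exponent $-q/p$ on the integral, rather than some nearby but different power. Once the inequality is in hand, the "in particular" consequence is immediate: if $u\leq C$ uniformly on $M$, then
\[
 \int_M G(z,y)^p\,u(y)^{a}\,dV_g\,\leq\,C^{a}\int_M G(z,y)^p\,dV_g\,<\,\infty
\]
(the last integral being finite by $p<\frac{n}{n-4}$ and the singularity estimate for $G$), so the right-hand side of the lemma's inequality is uniformly bounded below by a positive constant, giving a uniform lower bound on $\inf_M u$.
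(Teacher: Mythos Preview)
There is a genuine gap in the passage from a bound on $\sup_M u$ to a bound on $\inf_M u$. Your H\"older step at the maximum point $z$ gives
\[
u(z)\;\leq\;C\Bigl(\int_M G(z,y)^p u(y)^{a}\,dV_g\Bigr)^{1/p}\Bigl(\int_M u^{qp+a}\,dV_g\Bigr)^{1/q},
\]
and after bounding the second factor by $u(z)^{p+a/q}\mathrm{Vol}(M)^{1/q}$ and rearranging you obtain a \emph{lower} bound on $u(z)=\sup_M u$, with exponent $-1/((p-1)(p+a))$ on the integral rather than $-q/p$. Nothing in Lemma~\ref{lembounda} converts such a bound into a lower bound on $\inf_M u$: that lemma only says $\inf_M u\leq C_1$ and $\sup_M u\geq C_2$, both of which point the wrong way for this purpose.

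The paper's proof supplies the missing link by using the Green's representation at the \emph{minimum} point $x$: since $G\geq C_3>0$ uniformly on $M\times M$ off the diagonal (from \eqref{polebound}), one has
\[
\inf_M u \;=\; u(x)\;=\;\tfrac{n-4}{2}\bar Q\int_M G(x,y)\,u(y)^{\frac{n+4}{n-4}}\,dV_g \;\geq\; C'\int_M u^{\frac{n+4}{n-4}}\,dV_g.
\]
To make this connect cleanly, the paper first pulls one factor $u(y)\leq u(z)$ out of $u^{\frac{n+4}{n-4}}$ in the representation at $z$, obtaining $1\leq C\int_M G(z,y)u(y)^{8/(n-4)}dV_g$, and only then applies H\"older with the split $\frac{8}{n-4}=\frac{8}{n-4}\alpha+\frac{8}{n-4}(1-\alpha)$. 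The choice $\alpha=\frac{(n-4)a}{8p}$ is rigged precisely so that the second H\"older factor becomes $(\int_M u^{(n+4)/(n-4)})^{1/q}$, which then chains with the minimum-point inequality to yield $\inf_M u\geq C(\int G^p u^{8\alpha p/(n-4)})^{-q/p}$ with the correct exponent. Your decomposition $[Gu^{a/p}]\cdot u^{(n+4)/(n-4)-a/p}$ produces a second factor with exponent $qp+a\neq\frac{n+4}{n-4}$, so even with the min-point step added it would not close; you would need to redo the split.
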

\begin{proof}
Let $u(x)=\inf_M u$ and $u(z)=\sup_M u$.

By the expansion formula $(\ref{expansion1})$, there exist two constants $C_3, C_4>0$ so that
\begin{align}\label{polebound}
0< C_3 < \frac{1}{C_4}d_g(z_1,z_2)^{4-n} \leq G(z_1, z_2) \leq C_4 d_g(z_1, z_2)^{4-n},
\end{align}
for any two distinct points $z_1, z_2 \in M$.

By Green's representation at the maximum point $z$,
\begin{align*}
u(z)&= \frac{(n-4)}{2}\bar{Q}\int_M G(z, y)\,u(y)^{\frac{n+4}{n-4}}\, d V_g(y)\\
&\leq \frac{(n-4)}{2}\bar{Q}u(z)\int_M G(z, y)\,u(y)^{\frac{8}{n-4}}\, d V_g(y)
\end{align*}
so that
\begin{align*}
1&\leq \frac{(n-4)}{2}\bar{Q}\int_M G(z, y)\,u(y)^{\frac{8}{n-4}(\alpha+(1-\alpha))}\, d V_g(y)\\
&\leq \frac{(n-4)}{2}\bar{Q}(\int_M G(z, y)^p\,u(y)^{\frac{8}{n-4}\alpha p}\, d V_g(y))^{\frac{1}{p}}\,(\int_Mu(y)^{\frac{8}{(n-4)}(1-\alpha)q}\,dv_g(y))^{\frac{1}{q}}\\
&= \frac{(n-4)}{2}\bar{Q}(\int_M G(z, y)^p\,u(y)^{\frac{8}{n-4}\alpha p}\, d V_g(y))^{\frac{1}{p}}\,(\int_Mu(y)^{\frac{n+4}{n-4}}\,dv_g(y))^{\frac{1}{q}},
\end{align*}
with $\alpha$, $p$, $q$ chosen as in the lemma. Here the second inequality is by H$\ddot{\text{o}}$lder's inequality. The range of $a$ in the lemma keeps $0<\alpha<1$, $p>1$ and $q>1$, and also $p(4-n)>-n$ so that $G^p$ is integrable.

Therefore, combining with $(\ref{polebound})$ we have
\begin{align*}
\inf_M u &= u(x)=\frac{n-4}{2}\bar{Q}\int_M G(x, y) u(y)^{\frac{n+4}{n-4}}\,d V_g(y)\\
&\geq C' \int_M u(y)^{\frac{n+4}{n-4}}\,d V_g(y)\\
&\geq C (\int_M G(z, y)^p\,u(y)^{\frac{8}{n-4}\alpha p}\, d V_g(y))^{-\frac{q}{p}},
\end{align*}
where $C', C>0$ are uniform constants independent of $u$, $z$ and $x$.

\end{proof}

\section{Locally conformally flat manifolds}
In Qing-Raske \cite{Qing-Raske}, for locally conformally flat manifolds, upper bound for positive solutions to $(\ref{equation1})$ is given:
\begin{thm}\label{thmqr}
(Theorem 1.3 in \cite{Qing-Raske}) Let $(M^n g)$ be a closed locally conformally flat manifold of dimension $n \geq 5$ with positive Yamabe constant. Assume $(M, g)$ is not conformally equivalent to the round sphere. Then there exists $C>0$ so that for any positive function $u$ if the metric $g_1=u^{\frac{4}{n-4}}g$ is of positive scalar curvature and constant $Q$-curvature $1$, then $u\leq C$.
\end{thm}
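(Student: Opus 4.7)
The plan is to argue by contradiction: assume a sequence $u_j>0$ with $g_j = u_j^{4/(n-4)}g$ satisfying $R_{g_j}>0$ and $Q_{g_j}\equiv 1$, but $M_j := \max_M u_j \to \infty$. Since $(M,g)$ is locally conformally flat with positive Yamabe constant, Schoen--Yau's developing map gives a conformal embedding $\Phi:\tilde M \hookrightarrow S^n$ of the universal cover whose image is $S^n\setminus \Lambda$ for a closed set $\Lambda$ of small Hausdorff dimension. After stereographic projection and composing with the appropriate conformal factors, each $u_j$ lifts to a positive function $\tilde u_j$ on $\mathbb{R}^n\setminus \Lambda$ solving the Euclidean fourth-order equation
\begin{align*}
\Delta^2 \tilde u_j \,=\, \tfrac{n-4}{2}\bar Q\, \tilde u_j^{(n+4)/(n-4)},
\end{align*}
and equivariant under the deck transformations $\pi_1(M)$.

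Next I would carry out a blowup analysis on $\mathbb{R}^n$: pick lifts $\tilde x_j$ with $\tilde u_j(\tilde x_j)=M_j\to\infty$, rescale $v_j(y)=M_j^{-1}\tilde u_j\bigl(\tilde x_j + M_j^{-2/(n-4)}y\bigr)$, and use standard fourth-order elliptic regularity together with the classification of positive entire solutions of $\Delta^2 v = \tfrac{n-4}{2}\bar Q\, v^{(n+4)/(n-4)}$ on $\mathbb{R}^n$ (Lin, Wei--Xu) to obtain local convergence of $v_j$ to a standard bubble. To upgrade this to a global bound I would then run the moving-sphere method on the equivalent second-order system $-\Delta \tilde u_j = w_j$, $-\Delta w_j = \tfrac{n-4}{2}\bar Q\, \tilde u_j^{(n+4)/(n-4)}$: Kelvin-transform about a base point in $S^n\setminus \Lambda$, compare $\tilde u_j$ (and $w_j$) with their reflected Kelvin transforms, and slide the radius. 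Combined with the deck-transformation equivariance and the asymptotic profile near $\Lambda$ provided by the Green representation of Theorem \ref{thm1}, the moving-sphere argument either yields a uniform upper bound on $M_j$ (contradicting $M_j\to\infty$) or forces $\Lambda$ to consist of a single point; in the latter case $\Phi$ is a diffeomorphism onto $S^n\setminus\{\mathrm{pt}\}$ and $(M,g)$ is conformally the round sphere, contrary to hypothesis.

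The principal obstacle is the moving-sphere step, because $\Delta^2$ has no classical pointwise maximum principle. The remedy is to replace pointwise comparisons by integral comparisons against the positive Paneitz Green kernel guaranteed by Theorem \ref{thm1}; the system formulation together with positivity of this kernel supplies the monotonicity needed to initiate the moving spheres and to slide them to infinity. A secondary necessary ingredient is a sharp Hausdorff-dimension bound on $\Lambda$ under the Paneitz-positivity assumption, so that the Kelvin transforms and integral comparisons extend across $\Lambda$; this is the locally conformally flat Paneitz analogue of the Schoen--Yau estimate on the exceptional set of the developing map, and controlling it carefully is what makes the base point of the moving-sphere argument freely choosable.
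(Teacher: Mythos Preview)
The paper does not prove Theorem \ref{thmqr}; it is quoted from Qing--Raske \cite{Qing-Raske} and used as a black box in Theorem \ref{thmconformally_flat}. The introduction does record that Qing--Raske's argument proceeds ``with the use of the developing map and moving plane method,'' which is the route you sketch, so in broad strokes you are aligned with the original source rather than with anything in this paper.

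That said, two points in your outline are misattributed and would not go through as written. First, you invoke Theorem \ref{thm1} (Gursky--Malchiodi) for positivity of the Paneitz Green kernel and for the Green representation near $\Lambda$; but Theorem \ref{thmqr} assumes only local conformal flatness and positive Yamabe constant, not the pointwise hypotheses $R_g\geq 0$, $Q_g\geq 0$ with $Q_g\not\equiv 0$ required by Theorem \ref{thm1}. What Qing--Raske actually exploit is that after passing through the developing map one is on the round sphere, where the Paneitz Green function is explicit and positive, and the equation becomes an integral equation on $S^n$ to which the moving-plane method for integral equations applies directly. Second, the Hausdorff-dimension control on the limit set $\Lambda$ is Schoen--Yau's estimate, a consequence of positive scalar curvature (positive Yamabe constant), not of any Paneitz-type positivity; framing it as ``the locally conformally flat Paneitz analogue'' misstates the input. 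Both corrections matter: without the correct source for the Green kernel and for the size of $\Lambda$, the integral comparisons and the removability across $\Lambda$ that drive the moving-plane step are not justified.
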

For estimate of lower bound of $u$, they need assumption on the so called Poincar$\acute{\text{e}}$ exponent. Now for our problem, since $R_g>0$, the Yamabe constant is positive. The above theorem applies. Combining with Lemma \ref{lemlowerbound1}, we obtain that
\begin{thm}\label{thmconformally_flat}
 Let $(M^n, g)$ be a closed locally conformally flat Riemannian manifold of dimension $n\geq 5$. Assume that $(M, g)$ is not conformally equivalent to the round sphere. If $R_g\geq 0$, and also $Q_g\geq 0$ with $Q_g(p)>0$ for some point $p\in M$, then there exists $C>0$ and $C'=C'(\alpha)$ for any $0<\alpha<1$, so that for any solution $u$ of $(\ref{equation1})$,
\begin{align}
&\label{boundflatness} \frac{1}{C}< u < C,\\
&\label{boundflatness1} |u|_{C^{4, \alpha}}\leq C'.
\end{align}
\end{thm}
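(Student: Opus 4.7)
The plan is to assemble this compactness result from three ingredients already in play: the Qing--Raske upper bound (Theorem \ref{thmqr}), the integral estimate of Lemma \ref{lemlowerbound1}, and standard elliptic regularity for the Paneitz operator. The outline below explains how each piece fits in.

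First I would verify that Theorem \ref{thmqr} applies. By Theorem \ref{thm1}, the hypotheses $R_g\geq 0$, $Q_g\geq 0$ with $Q_g(p)>0$ force $R_g>0$ on $M$, so the Yamabe constant of $(M,g)$ is strictly positive. Moreover, if $u>0$ solves $(\ref{equation1})$, then the conformal metric $g_1=u^{4/(n-4)}g$ has constant positive $Q$-curvature, so by the same theorem $R_{g_1}>0$. After a harmless multiplicative rescaling of $g_1$ (which scales $u$ by a fixed constant depending only on $n$), we may assume $Q_{g_1}=1$. Since $(M,g)$ is locally conformally flat with positive Yamabe constant and is not conformal to the round sphere, Theorem \ref{thmqr} provides the uniform upper bound $u\leq C$ in $(\ref{boundflatness})$.

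Next I would obtain the lower bound in $(\ref{boundflatness})$ as an immediate corollary of Lemma \ref{lemlowerbound1}. Choose any $\tfrac{4}{n-4}<a<\tfrac{8}{n-4}$ and the associated $p,q,\alpha$ from that lemma; the range of $a$ is arranged precisely so that $p(4-n)>-n$, hence $G(z,\cdot)^p$ is integrable on $M$ by the Green's function pole estimate $(\ref{polebound})$. Combined with the already established pointwise bound $u\leq C$, this gives a uniform bound on $\int_M G(z,y)^p u(y)^{\frac{8}{n-4}\alpha p}dV_g$, and the lemma then yields $\inf_M u\geq 1/C$ for a uniform constant.

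Finally, for $(\ref{boundflatness1})$, I would bootstrap on $(\ref{equation1})$. Once $C^{-1}\leq u\leq C$, the right-hand side $\tfrac{n-4}{2}\bar{Q}u^{(n+4)/(n-4)}$ is bounded in $L^\infty(M)$. Since $P_g$ is a fourth-order elliptic operator with smooth coefficients, $L^p$ theory gives $u\in W^{4,p}(M)$ for every $p<\infty$ with $p$-independent-of-$u$ bounds, hence $u\in C^{3,\beta}$ for every $\beta<1$ by Sobolev embedding. Because $u$ is bounded away from zero, the composition $u^{(n+4)/(n-4)}$ enjoys the same regularity, and Schauder estimates for $P_g$ then yield $\|u\|_{C^{4,\alpha}}\leq C'(\alpha)$ for every $0<\alpha<1$, completing the proof.

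No essential obstacle remains here: the hard analytic work has been absorbed into Theorem \ref{thmqr} (moving plane/developing map argument of Qing--Raske) and Lemma \ref{lemlowerbound1} (a Green's function Hölder estimate). The only points requiring a little care are the verification that one is in the hypotheses of Theorem \ref{thmqr} after conformally rescaling the solution metric, and the mild nonlinear bootstrap in the final paragraph, both of which are standard.
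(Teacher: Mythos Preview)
Your proposal is correct and follows essentially the same approach as the paper: upper bound via Theorem~\ref{thmqr} (after checking $R_g>0$ and $R_{g_1}>0$ from Theorem~\ref{thm1}), lower bound via Lemma~\ref{lemlowerbound1}, and then a standard elliptic bootstrap for the $C^{4,\alpha}$ estimate. The paper's proof is more terse but uses exactly these three ingredients in the same order.
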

\begin{proof}
For $Q_{u^{\frac{4}{n-4}}g}=\bar{Q}$, by Theorem \ref{thm1} the scalar curvature $R_{u^{\frac{4}{n-4}}g}>0$. The estimate $(\ref{boundflatness})$ follows from Theorem \ref{thmqr} and Lemma \ref{lemlowerbound1}. To establish $(\ref{boundflatness1})$, one can either use ellipticity of the equation $(\ref{equation1})$ with a bootstrapping argument or take derivatives on the Green's representation and use $(\ref{expansion1})$. This completes the proof of Theorem \ref{thmconformally_flat}.
\end{proof}

\section{A maximum principle}

In this section we give a maximum principle for smooth domains with boundary in the manifold $(M, g)$ defined in Lemma \ref{lembounda}, which is a modification of the maximum principle given by Gursky and Malchiodi, see Lemma \ref{lemmaximumprinciple}. As an application, we give a lower bound estimate of the blowing up sequence.
\begin{lem}\label{lemboundScurvature}
Let $(\bar{\Omega}, g)$ be a compact Riemannian manifold with boundary $\partial \Omega$ of dimension $n\geq 5$. Let $\Omega$ be the interior of $\bar{\Omega}$. Assume the scalar curvature $R_g\geq 0$ in $\bar{\Omega}$ and $R_g>0$ at points on the boundary, and also $Q_g\geq 0$ in $\bar{\Omega}$. Then $R_g>0$ in $\bar{\Omega}$.
\end{lem}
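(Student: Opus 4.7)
The plan is to derive an elliptic differential inequality for $R_g$ directly from the definition of $Q_g$, and then invoke the classical strong maximum principle. Rearranging
\begin{align*}
Q_g=-\frac{2}{(n-2)^2}|Ric_g|^2+\frac{n^3-4n^2+16n-16}{8(n-1)^2(n-2)^2}R_g^2-\frac{1}{2(n-1)}\Delta_gR_g,
\end{align*}
and setting $c_0:=\frac{n^3-4n^2+16n-16}{4(n-1)(n-2)^2}$, which is positive for $n\geq 5$, one obtains the pointwise identity
\begin{align*}
-\Delta_g R_g \,+\, c_0\, R_g\cdot R_g \;=\; 2(n-1)\,Q_g\,+\,\frac{4(n-1)}{(n-2)^2}|Ric_g|^2 \;\geq\; 0
\end{align*}
on $\bar{\Omega}$, where the inequality uses only $Q_g\geq 0$. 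Putting $V:=c_0\,R_g$, which is continuous and non-negative on $\bar{\Omega}$ by the hypothesis $R_g\geq 0$, this becomes
\begin{align*}
-\Delta_g R_g \,+\, V\, R_g \;\geq\; 0 \qquad \text{on }\bar{\Omega}.
\end{align*}

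Next I would argue by contradiction. Suppose $R_g(x_0)=0$ at some $x_0\in \bar{\Omega}$; the hypothesis $R_g>0$ on $\partial\Omega$ forces $x_0$ to lie in the interior $\Omega$, and because $R_g\geq 0$ the function $R_g$ attains its minimum value $0$ at the interior point $x_0$. Rewrite the displayed inequality as $L R_g\leq 0$, where $L:=\Delta_g-V$ is a second-order elliptic operator with bounded coefficients and whose zero-th order coefficient $-V$ satisfies $-V\leq 0$. The classical Hopf strong minimum principle (see, e.g., Gilbarg-Trudinger, Theorem 3.5) then forces $R_g$ to be constant on the connected component of $\bar{\Omega}$ containing $x_0$, hence $R_g\equiv 0$ there. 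Since $\bar{\Omega}$ (or any component of it meeting $x_0$) touches $\partial\Omega$, this contradicts the assumption $R_g>0$ on the boundary.

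The only real subtlety is bookkeeping the sign of the zero-th order coefficient produced by the rearrangement above: it is precisely the standing assumption $R_g\geq 0$ that renders $V=c_0 R_g$ non-negative and hence makes the textbook strong maximum principle directly applicable, avoiding any need for the more delicate variants permitting coefficients of unrestricted sign. No geometric input beyond non-negativity of $Q_g$ and of $|Ric_g|^2$ is used; in particular, no Kato-type comparison between $|Ric_g|^2$ and $R_g^2$ is needed, and the hypothesis $Q_g(p)>0$ somewhere, present in the rest of the paper, is not required for this purely local statement.
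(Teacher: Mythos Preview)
Your proof is correct and follows essentially the same route as the paper's: rearrange the definition of $Q_g$ to obtain $\Delta_g R_g\le c\,R_g^2$ (equivalently, $-\Delta_g R_g+V R_g\ge 0$ with $V=c_0 R_g\ge 0$), and then invoke the strong maximum principle together with the boundary positivity $R_g>0$ on $\partial\Omega$. The only cosmetic difference is that the paper discards the $|Ric_g|^2$ term on the other side of the inequality, while you retain it as a nonnegative contribution on the right; the resulting differential inequality and the concluding application of the strong maximum principle are identical.
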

\begin{proof}
The proof is the same as that for closed manifolds. The $Q$- curvature is expressed as
\begin{align*}
Q_g=-\frac{1}{2(n-1)}\Delta_gR_g+c_1(n)R_g^2-c_2(n)|Ric|_g^2
\end{align*}
with $c_1(n), c_2(n)$ positive. By the non-negativity of $Q_g$,
\begin{align*}
\frac{1}{2(n-1)}\Delta_gR_g\leq c_1(n)R_g^2.
\end{align*}
By strong maximum principle and the boundary condition, $R_g>0$ in $\bar{\Omega}$.
\end{proof}

\begin{lem}\label{lemmaximumprinciple}
Let $(M^n, g)$ be a closed Riemannian manifold of dimension $n\geq 5$ with $R_g\geq 0$, and $Q_g\geq 0$. Let $\Omega \subseteq M$ be an open domain with smooth boundary $\partial \Omega$ so that $\bar{\Omega}=\Omega \bigcup \partial \Omega$. Assume that $u\in C^4(\bar{\Omega})$ satisfies that
\begin{align}
P_gu\geq 0\,\,\text{in}\,\Omega,
\end{align}
with $u>0$ on $\partial \Omega$. Let $\tilde{g} =u^{\frac{4}{n-4}}g$ be the conformal metric in a neighborhood $\mathcal {U}$ of $\partial \Omega$ where $u>0$. If the scalar curvature of $(\mathcal {U}, \tilde{g})$ satisfies $R_{\tilde{g}}(p)>0$ for all points $p \in \partial\Omega$, then $u>0$ in $\Omega$.
\end{lem}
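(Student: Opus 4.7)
The plan is to adapt the Gursky--Malchiodi closed-manifold maximum principle for the Paneitz operator to this boundary setting, by converting the fourth-order inequality $P_g u \geq 0$ into a second-order scalar curvature statement via the conformal change $\tilde g := u^{4/(n-4)} g$, then closing with Hopf's boundary-point lemma. I argue by contradiction. Suppose $u$ is not positive throughout $\Omega$. Let $U$ be the connected component of $\{x \in \bar\Omega : u(x) > 0\}$ meeting $\partial \Omega$; since $u > 0$ on $\partial\Omega$, $U$ is open and the relative interior boundary $\Gamma := \partial U \cap \Omega$ is nonempty with $u \equiv 0$ on $\Gamma$. On $U$ the metric $\tilde g$ is smooth, and the Paneitz transformation rule combined with $P_g u \geq 0$ yields $Q_{\tilde g} \geq 0$ throughout $U$.

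Next I show $R_{\tilde g} > 0$ on $U$, adapting the argument of Lemma \ref{lemboundScurvature} to the (non-compact) Riemannian manifold $(U, \tilde g)$. The $Q$-curvature identity gives
\begin{align*}
-\Delta_{\tilde g} R_{\tilde g} + 2(n-1) c_1(n) R_{\tilde g}\cdot R_{\tilde g} \geq 2(n-1) c_2(n) |Ric_{\tilde g}|^2 \geq 0
\end{align*}
on $U$, so $R_{\tilde g}$ is a supersolution of a linear second-order elliptic operator with continuous coefficient. By hypothesis and continuity, $R_{\tilde g} > 0$ on the open neighborhood $\mathcal{U}$ of $\partial\Omega$. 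Setting $V := \{R_{\tilde g} > 0\} \subseteq U$, if $V \subsetneq U$ I pick a boundary point $x_0$ of $V$ inside $U$; there $R_{\tilde g}(x_0) = 0$ while $R_{\tilde g} \geq 0$ on $\bar V$, so a Hopf-type strong maximum principle forces $R_{\tilde g} \equiv 0$ on a neighborhood of $x_0$, and iterating along interior chains of balls in the connected set $U$ contradicts $R_{\tilde g} > 0$ on $\mathcal{U}$. Hence $R_{\tilde g} > 0$ on $U$.

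To close, I introduce $v := u^{(n-2)/(n-4)}$, so that $\tilde g = v^{4/(n-2)} g$ is the Yamabe-type conformal change. The scalar curvature identity reads
\begin{align*}
L_g v := -\tfrac{4(n-1)}{n-2}\Delta_g v + R_g v = R_{\tilde g}\, v^{(n+2)/(n-2)} > 0 \quad \text{on } U.
\end{align*}
I pick $y \in U$ close to $\Gamma$ and let $B = B_\rho(y) \subset U$ be the largest $g$-geodesic ball centered at $y$ contained in $U$; some $x_0 \in \partial B \cap \Gamma$ then satisfies the interior ball condition. Since $u \in C^4$ with $u(x_0) = 0$ and $\nabla v = \tfrac{n-2}{n-4} u^{2/(n-4)} \nabla u$, the positive exponent $2/(n-4) > 0$ gives $v \in C^1(\bar B)$ with $\nabla v(x_0) = 0$. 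Since $R_g \geq 0$, the operator $L_g$ is linear elliptic with nonnegative zero-order coefficient, so Hopf's boundary-point lemma, applied to $v > 0$ on $B$ attaining its minimum value $0$ at $x_0$ and satisfying $L_g v > 0$ on $B$, forces $\partial_\nu v(x_0) < 0$ for the outward unit normal $\nu$. This contradicts $\nabla v(x_0) = 0$, completing the argument.

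The principal obstacle will be Step 2: since $(U, \tilde g)$ is not compact and $\tilde g$ degenerates as $u \to 0$ along $\Gamma$, Lemma \ref{lemboundScurvature} cannot be invoked on a compact domain directly, and positivity of $R_{\tilde g}$ must instead be propagated from $\mathcal{U}$ through all of $U$ by careful strong-maximum-principle arguments on overlapping interior balls.
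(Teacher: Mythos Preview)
Your Step 2 contains a genuine gap that cannot be repaired along the lines you sketch. The difficulty is not merely that $(U,\tilde g)$ is non-compact; it is that $R_{\tilde g}$ typically diverges to $-\infty$ as one approaches the interior zero set $\Gamma$. Indeed, from the conformal formula
\[
R_{\tilde g}=u^{-\frac{n}{n-4}}\Big(-\tfrac{4(n-1)}{n-4}\Delta_g u-\tfrac{8(n-1)}{(n-4)^2}\tfrac{|\nabla_g u|^2}{u}+R_g u\Big),
\]
at a point of $\Gamma$ where $\nabla_g u\neq 0$ the bracket tends to $-\infty$ while the prefactor blows up, so $R_{\tilde g}\to -\infty$. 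Consequently your set $V=\{R_{\tilde g}>0\}$ may well be a strict subset of $U$ with $R_{\tilde g}<0$ on $U\setminus\bar V$; the point $x_0\in\partial V\cap U$ is then \emph{not} a local minimum of $R_{\tilde g}$ in $U$, and no strong-maximum-principle or Hopf-type argument applied in balls around $x_0$ yields a contradiction. The ``propagation by overlapping balls'' you propose requires exactly what it is meant to prove, namely nonnegativity of $R_{\tilde g}$ on those balls.

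The paper circumvents this obstruction by a continuity path: setting $u_\lambda=(1-\lambda)+\lambda u$, one has $u_\lambda>0$ on all of $\bar\Omega$ for every $\lambda$ below the first touching time $\lambda_0$, so the conformal metric $g_\lambda=u_\lambda^{4/(n-4)}g$ is smooth on the \emph{compact} set $\bar\Omega$ and never degenerates. One then checks directly that $Q_{g_\lambda}\geq 0$ in $\Omega$ (by linearity of $P_g$) and that $R_{g_\lambda}>0$ on $\partial\Omega$ (via a pointwise inequality comparing it to $R_{\tilde g}$), so Lemma~\ref{lemboundScurvature} applies on the compact $(\bar\Omega,g_\lambda)$ to give $R_{g_\lambda}>0$ throughout, hence $\Delta_g u_\lambda\leq \tfrac{n-4}{4(n-1)}R_g u_\lambda$. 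Passing to the limit $\lambda\nearrow\lambda_0$ and invoking the second-order strong maximum principle forces $u_{\lambda_0}>0$ on $\bar\Omega$, contradicting the choice of $\lambda_0$. The homotopy to the constant function $1$ is precisely the device that keeps the conformal metric nondegenerate and the domain compact at every stage; your direct approach with $\tilde g=u^{4/(n-4)}g$ lacks any such mechanism. Your Step~3 is fine once $R_{\tilde g}>0$ is known, but that is the entire content of the lemma.
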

\begin{proof}
Our conditions on the boundary guarantee that all the argument is focused on the interior and then the argument is the same as in the proof of the maximum principle by Gursky and Malchiodi. For completeness, we present the proof.

We define the function
\begin{align*}
u_{\lambda}=(1-\lambda)+\lambda u
\end{align*}
for $\lambda\in [0, 1]$, so that $u_0=1$ and $u_1=u$. We assume that \begin{align*}
\min_{\overline{\Omega}} u\leq 0.
\end{align*}
Then there exists $\lambda_0\in (0, 1]$ so that
\begin{align*}
\lambda_0=\inf\{\lambda \in(0, 1],\,\inf_{\overline{\Omega}} u_{\lambda}=0\}.
\end{align*}
By definition, for $0<\lambda<\lambda_0$, $u_{\lambda}>0$. For the metric
\begin{align*}
g_{\lambda}=u_{\lambda}^{\frac{4}{n-4}}g,
\end{align*}
the $Q$-curvature satisfies
\begin{align*}
Q_{g_{\lambda}}\geq 0\,\,\text{in}\,\Omega,
\end{align*}
for $0<\lambda<\lambda_0$. That follows from the conformal transformation formula
\begin{align*}
Q_{g_{\lambda}}&=\frac{2}{n-4}u_{\lambda}^{-\frac{(n+4)}{(n-4)}}P_gu_{\lambda}\\
&=\frac{2}{n-4}u_{\lambda}^{-\frac{(n+4)}{(n-4)}}((1-\lambda)P_g(1)+\lambda P_g u)\\
&=\frac{2}{n-4}u_{\lambda}^{-\frac{(n+4)}{(n-4)}}((1-\lambda)\frac{(n-4)}{2}Q_g+\lambda P_g u)\\
&\geq (1-\lambda)Q_gu_{\lambda}^{-\frac{n+4}{n-4}}\geq 0.
\end{align*}

Under the conformal transformation, the scalar curvature of $g_{\lambda}$ satisfies
\begin{align*}
R_{g_{\lambda}}&=u_{\lambda}^{-\frac{n}{n-4}}\big(-\frac{4(n-1)}{(n-4)}\Delta_g u_{\lambda}- \frac{8(n-1)}{(n-4)^2}\frac{|\nabla_gu_{\lambda}|^2}{u_{\lambda}}+R_gu_{\lambda}\big)\\
&=u_{\lambda}^{-\frac{n}{n-4}}\big(-\frac{4(n-1)}{(n-4)}\lambda\Delta_g u- \frac{8(n-1)}{(n-4)^2}\frac{\lambda^2|\nabla_gu|^2}{(1-\lambda)+\lambda u}+R_gu_{\lambda}\big)\\
&\geq u_{\lambda}^{-\frac{n}{n-4}}\big(-\frac{4(n-1)}{(n-4)}\lambda\Delta_g u- \frac{8(n-1)}{(n-4)^2}\frac{\lambda|\nabla_gu|^2}{ u}+ \lambda R_gu\big)\\
&=\lambda\big(\frac{u}{u_{\lambda}}\big)^{\frac{n}{n-4}}R_{\tilde{g}}>0
\end{align*}
on $\partial \Omega$ for $0<\lambda< \lambda_0$.
Then by Lemma \ref{lemboundScurvature},
\begin{align*}
R_{g_{\lambda}}>0 \,\,\text{in}\,\Omega
\end{align*}
for $0< \lambda < \lambda_0$. Again by the conformal transformation formula of scalar curvature,
\begin{align*}
\Delta_g u_{\lambda}\leq \frac{(n-4)}{4(n-1)}R_gu_{\lambda}\,\,\,\text{in}\,\,\Omega.
\end{align*}
By taking limit $\lambda\nearrow \lambda_0$, this also holds at $\lambda=\lambda_0$. But
\begin{align*}
u_{\lambda}=(1-\lambda)+\lambda u>0
\end{align*}
on $\partial \Omega$ for $0\leq\lambda\leq 1$. By strong maximum principle, $u_{\lambda_0}>0$ in $\bar{\Omega}$, contradicting with choice of $\lambda_0$. Therefore, for all $0\leq \lambda \leq 1$, \begin{align*}
u_{\lambda}>0\,\,\, \text{in}\,\, \Omega.
\end{align*}
In particular, $u>0$ in $\Omega$.

\end{proof}

\vskip0.3cm
\begin{thm}\label{thmlowerbound}
Let $(M^n, g)$ be a closed Riemannian manifold of dimension $n\geq 5$ with $R_g\geq 0$, and also $Q_g\geq 0$ with $Q_g(p_0)>0$ for some point $p_0\in M$. There exists $C>0$ so that if there exists a sequence of positive solutions $\{u_j\}_{j=1}^{\infty}$ of $(\ref{equation1})$ so that
\begin{align*}
M_j=u_j(x_j)=\sup_{M}u_j \to \infty
\end{align*}
as $j\to \infty$, then
\begin{align}\label{ineqnblowuplowerbound}
u_j(p)\geq C M_j^{-1}d_g^{4-n}(p, x_j)
\end{align}
for any $p\in M$ such that $d_g(p, x_j)\geq M_j^{-\frac{2}{n-4}}$.
\end{thm}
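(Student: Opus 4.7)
The approach is to compare $u_j$ with a small multiple of the Green's function $G(\cdot, x_j)$ of $P_g$ on the annular region $\Omega_j := M \setminus \bar B^g_{\eps_j}(x_j)$, where $\eps_j := M_j^{-2/(n-4)}$, via the maximum principle Lemma \ref{lemmaximumprinciple}. Concretely, I would set
\begin{equation*}
\phi_j(p) = c_1 M_j^{-1} G(p, x_j), \qquad v_j = u_j - \phi_j,
\end{equation*}
for a small constant $c_1 > 0$ to be chosen. Since $G(\cdot, x_j)$ is $P_g$-harmonic away from $x_j$, $P_g v_j = P_g u_j \geq 0$ throughout $\Omega_j$, so applying Lemma \ref{lemmaximumprinciple} reduces to verifying (i) $v_j > 0$ on $\partial B^g_{\eps_j}(x_j)$ and (ii) $R_{v_j^{4/(n-4)} g}(p) > 0$ for every $p \in \partial B^g_{\eps_j}(x_j)$. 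Once $v_j > 0$ on $\Omega_j$, the bound $(\ref{ineqnblowuplowerbound})$ is immediate from the Green's lower bound $G(p, x_j) \geq C\, d_g(p, x_j)^{4-n}$ in $(\ref{polebound})$.

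For (i), I would rescale near the blow-up point: let
\begin{equation*}
\tilde u_j(y) := M_j^{-1} u_j(\exp_{x_j}(\eps_j y)), \qquad \tilde g_j := \eps_j^{-2}\exp_{x_j}^* g,
\end{equation*}
so $\tilde u_j(0) = \sup \tilde u_j = 1$ and $\tilde u_j$ solves the rescaled constant-$Q$-curvature equation under $\tilde g_j$, with $\tilde g_j \to g_{\mathbb{R}^n}$ in $C^4_{loc}$. Standard elliptic estimates combined with the classification of positive entire solutions of the critical Paneitz equation on $\mathbb{R}^n$ yield subsequential convergence $\tilde u_j \to U$ in $C^4_{loc}(\mathbb{R}^n)$, where $U$ is the standard bubble obtained from the stereographic projection of $S^n$. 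This gives a uniform $c_0 > 0$ with $u_j \geq c_0 M_j$ on $\partial B^g_{\eps_j}(x_j)$ for $j$ large. By $(\ref{expansion1})$, meanwhile, $\phi_j = c_1 c_n M_j(1 + o(1))$ on the same sphere, so choosing $c_1 < c_0/(2c_n)$ produces $v_j \geq (c_0/2) M_j > 0$ there.

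The main obstacle is (ii). Under the same rescaling, $\tilde v_j(y) := \eps_j^{(n-4)/2} v_j(\exp_{x_j}(\eps_j y))$ converges in $C^4$ on compact subsets of $\mathbb{R}^n \setminus \{0\}$ to $V_{c_1}(y) := U(y) - c_1 c_n |y|^{4-n}$, and scalar curvature is invariant under the pullback induced by $y \mapsto \exp_{x_j}(\eps_j y)$ together with the attendant conformal rescaling, so $R_{v_j^{4/(n-4)} g}|_{\partial B^g_{\eps_j}(x_j)}$ converges uniformly to $R_{V_{c_1}^{4/(n-4)} g_{\mathbb{R}^n}}|_{\{|y|=1\}}$. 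When $c_1 = 0$ this limit is the scalar curvature of $U^{4/(n-4)} g_{\mathbb{R}^n}$, which is a positive constant multiple of the round sphere metric $g_{S^n}$ and hence a positive constant. Continuity of the scalar-curvature functional in the $C^4$ topology of the conformal factor then lets us shrink $c_1$ so that $R_{V_{c_1}^{4/(n-4)} g_{\mathbb{R}^n}} > 0$ on $\{|y|=1\}$, and for $j$ large we conclude $R_{v_j^{4/(n-4)} g} > 0$ on $\partial B^g_{\eps_j}(x_j)$.

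With both boundary conditions checked, Lemma \ref{lemmaximumprinciple} yields $v_j > 0$ on $\Omega_j$, i.e., $u_j(p) > c_1 M_j^{-1} G(p, x_j)$ for every $p$ with $d_g(p, x_j) \geq \eps_j$. Combining with $(\ref{polebound})$ delivers $(\ref{ineqnblowuplowerbound})$ with a uniform $C > 0$; a standard subsequence-extraction argument handles the passage to subsequences used in the bubble analysis.
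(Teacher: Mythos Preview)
Your proof is correct and follows the same strategy as the paper's: subtract a small multiple of $G(\cdot, x_j)$ from $u_j$ and apply Lemma~\ref{lemmaximumprinciple} on $M \setminus B_{\eps_j}(x_j)$, verifying the two boundary conditions via the rescaled bubble convergence. The only detail you elide is that Lin's classification requires the limiting entire solution to satisfy $-\Delta v \geq 0$, which the paper extracts from $R_{u_j^{4/(n-4)} g} > 0$ (guaranteed by Theorem~\ref{thm1}) and passes to the limit; your continuity-in-$c_1$ argument for the boundary scalar curvature is a clean alternative to the paper's explicit term-by-term computation of $\Delta_g \phi_j + \tfrac{2}{n-4}|\nabla_g \phi_j|^2/\phi_j$.
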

\begin{proof}
To prove the theorem, we only need to show that there exists $C>0$ so that for any blowing up sequence, there exists a subsequence so that $(\ref{ineqnblowuplowerbound})$ holds.

Let $x=(x^1,...,x^n)$ be normal coordinates in a small geodesic ball centered at $x_j$ with radius $\delta>0$ and $x_j$ the origin. Let $y=M_j^{\frac{2}{n-4}}x$ and the metric $h_j$ be given by $(h_j)_{pq}(y)=g_{pq}(M_j^{-\frac{2}{n-4}} y)$. Let
\begin{align*}
v_j(y)=M_j^{-1}u_j(\exp_{x_j}(M_j^{-\frac{2}{n-4}}y))\,\,\,\text{for}\,\,|y|\leq \delta M_j^{\frac{2}{n-4}}.
\end{align*}
 Then
\begin{align*}
&0<\, v_j(y)\,\leq v_j(0)=1,\\
&P_{h_j}v_j(y)=\frac{(n-4)}{2}\bar{Q}v_j(y)^{\frac{n+4}{n-4}}\,\,\,\,\text{for}\,\,|y|\leq\delta M_j^{\frac{2}{n-4}}.
\end{align*}
Here $h_j$ converges to Euclidean metric on $\mathbb{R}^n$ in $C^k$ norm for any $k\geq 0$. By ellipticity, we have, after passing to a subsequence( still denoted as $\{v_j\}$), $v_j\to v$ in $C_{loc}^4(\mathbb{R}^n)$ and $v$ satisfies
\begin{align*}
&0\leq v(y) \leq v(0)=1\,\,\,\,\text{in}\,\,\mathbb{R}^n,\\
&\Delta^2v(y)=\frac{(n-4)}{2}\bar{Q}v(y)^{\frac{n+4}{n-4}}\,\,\,\,\text{in}\,\,\mathbb{R}^n.
\end{align*}
Also, since $R_{h_j}>0$ and $R_{u_j^{\frac{4}{n-4}}g}>0$ on $M$, by conformal transformation formula of scalar curvature,
\begin{align*}
\Delta_{h_j}v_j\leq \frac{(n-4)}{4(n-1)}R_{h_j}v_j.
\end{align*}
Passing to the limit we have
\begin{align*}
\Delta v(y)\leq 0\,\,\,\,\text{in}\,\,\mathbb{R}^n.
\end{align*}
By strong maximum principle, since $v(0)=1$, we have that $v(y)>0$ in $\mathbb{R}^n$. Then by the classification theorem of C.S. Lin(\cite{Lin}), we have that
\begin{align*}
v(y)= \big(\frac{1}{1+4^{-1}|y|^2}\big)^{\frac{n-4}{2}}\,\,\,\,\text{in}\,\,\mathbb{R}^n.
\end{align*}
We will abuse the notation $v(|y|)=v(y)$. Therefore, for fixed $R>0$, for $j$ large,
\begin{align*}
\frac{1}{2}\big(\frac{1}{1+4^{-1}R^2}\big)^{\frac{n-4}{2}}M_j\leq u_j(\exp_{x_j}(x))\leq M_j\,\,\,\,\text{for}\,\,|x|\leq R M_j^{-\frac{2}{n-4}}.
\end{align*}
For any $\epsilon>0$, there exists $j_0>0$ so that for $j>j_0$,
\begin{align*}
\|v_j-v\|_{C^4}\leq \epsilon\,\,\,\,\text{for}\,\,|y|\leq 2.
\end{align*}
We define $\phi_j: M-\{x_j\}\to \mathbb{R}$ as
\begin{align*}
\phi_j(p)=u_j(p)-\tau M_j^{-1}G_{x_j}(p)
\end{align*}
with $G_{x_j}(p)=G(x_j, p)$ Green's function of Paneitz operator and $\tau>0$ a small constant to be chosen. We will use maximum principle to show that for $\epsilon, \tau>0$ small,
\begin{align*}
\phi_j>0\,\,\,\,\text{in} \,\,M-B_{M_j^{-\frac{2}{n-4}}}(x_j),\,\,\,\text{for}\,\,j>j_0.
\end{align*}
Here $B_{M_j^{-\frac{2}{n-4}}}(x_j)$ denote the geodesic $M_j^{-\frac{2}{n-4}}$-ball centered at $x_j$ in $(M, g)$. If this holds, we will choose $\{u_j\}_{j>j_0}$ as the subsequence and the theorem is proved.

It is clear that
\begin{align*}
P_g\phi_j=P_gu_j=\frac{n-4}{2}\bar{Q}u_j^{\frac{n+4}{n-4}}>0\,\,\,\text{in} \,\,M-B_{M_j^{-\frac{2}{n-4}}}(x_j).
\end{align*}
To apply the maximum principle, we only need to verify sign of $\phi_j$ and related scalar curvature on $\partial B_{M_j^{-\frac{2}{n-4}}}(x_j)$.

First, for $|x|= M_j^{-\frac{2}{n-4}}$, we choose $\epsilon$ small so that for $j>j_0$
\begin{align*}
u_j(\exp_{x_j}(x))=M_jv_j(M_j^{\frac{2}{n-4}}x)\geq \frac{1}{2}v(1)M_j;
\end{align*}
while by $(\ref{polebound})$,
\begin{align*}
M_j^{-1}G_{x_j}(\exp_{x_j}(x))\leq C_4 M_j.
\end{align*}
We take $\tau< \frac{v(1)}{4C_4}$. Then
\begin{align*}
\phi_j>0\,\,\,\text{on} \,\,\partial B_{M_j^{-\frac{2}{n-4}}}(x_j),\,\,\,\text{for}\,\,j>j_0.
\end{align*}
Now let $\tilde{g}_j=\phi_j^{\frac{4}{n-4}}g_j$ in small neighborhood of $\partial B_{M_j^{-\frac{2}{n-4}}}(x_j)$ where $\phi_j>0$. By conformal transformation,
\begin{align*}
R_{\tilde{g}_j}&=\phi_j^{-\frac{n}{n-4}}\big(-\frac{4(n-1)}{(n-4)}\Delta_g \phi_j- \frac{8(n-1)}{(n-4)^2}\frac{|\nabla_g\phi_j|^2}{\phi_j}+R_g\phi_j\big).
\end{align*}
Note that $R_g\phi_j>0$ on $\partial B_{M_j^{-\frac{2}{n-4}}}(x_j)$. We only need to show that
\begin{align}\label{positiveScurvature}
-\frac{4(n-1)}{(n-4)}(\Delta_g \phi_j+ \frac{2}{(n-4)}\frac{|\nabla_g\phi_j|^2}{\phi_j})>0 \,\,\,\text{on} \,\,\partial B_{M_j^{-\frac{2}{n-4}}}(x_j),\,\,\,\text{for}\,\,j>j_0.
\end{align}
Recall that
\begin{align*}
(\Delta_g u_j+ \frac{2}{(n-4)}\frac{|\nabla_gu_j|^2}{u_j})=M_j^{1+\frac{4}{n-4}}(\Delta_{h_j}v_j+ \frac{2}{(n-4)}\frac{|\nabla_{h_j}v_j|^2}{v_j}).
\end{align*}
Also,
\begin{align*}
(\Delta_{h_j}v_j+ \frac{2}{(n-4)}\frac{|\nabla_{h_j}v_j|^2}{v_j})&\to (\Delta v+ \frac{2}{(n-4)}\frac{|\nabla v |^2}{v})\\
&=2(4-n)(|y|^2+4)^{-\frac{n}{2}}(|y|^2+2n)\,\,+\frac{2}{n-4}\frac{(4-n)^2(|y|^2+4)^{2-n}|y|^2}{(|y|^2+4)^{\frac{4-n}{2}}}\\
&=2(4-n)(|y|^2+4)^{-\frac{n}{2}}(|y|^2+2n)\,\,+\,2(n-4)(|y|^2+4)^{-\frac{n}{2}}|y|^2\\
&=4n(4-n)(|y|^2+4)^{-\frac{n}{2}}<0\,\,\,\,\text{at}\,\,|y|=1.
\end{align*}
Then we can choose $\epsilon< \frac{1}{100^n}|v|_{C^4(B_1(0))}$.
Combining with the fact that
\begin{align*}
|D_g^kG_{p}(q)|\leq C_k d_g^{4-n-k}(p, q)\,\,\,\,\text{for}\,\,0\leq k\leq 4,
\end{align*}
for any distinct points $p, q \in M$ with constant $C_k>0$ independent of $p, q$, there exists $\tau>0$ only depending on $C_k$ and $\epsilon$ so that
\begin{align*}
&\tau M_j^{-1}|\Delta_g G_{x_j}(\exp_{x_j}(M_j^{-\frac{2}{n-4}}y))|\,<\,- M_j^{1+\frac{4}{n-4}}\frac{\Delta v}{4(2n+1)},\,\,\,\,\text{and}\\
&\frac{|\nabla_g\phi_j|^2}{\phi_j}\leq \frac{5}{4}M_j^{1+\frac{4}{n-4}}\frac{|\nabla v |^2}{v}\,\,\,\text{at}\,\,|y|=1,\,\,\,\,\text{for}\,\,j>j_0.
\end{align*}
 Therefore, for $j>j_0$, $(\ref{positiveScurvature})$ holds,  which implies that
 \begin{align*}
 R_{\tilde{g}_j}>0\,\,\text{on}\,\,\partial B_{M_j^{-\frac{2}{n-4}}}(x_j).
 \end{align*}
 By Lemma \ref{lemmaximumprinciple}, $\phi_j> 0$ in $M - B_{M_j^{-\frac{2}{n-4}}}(x_j)$. Recall that $\epsilon$ and $\tau$ are chosen independent of choice of the sequence. This completes the proof of the theorem.
\end{proof}

\section{A Pohozaev type identity}
In this section we introduce a Pohozaev type identity related to the constant $Q$-curvature equation. It will provide local information of the solutions in later use.

Let $(M^n, g)$ be a closed Riemannian manifold of dimension $n\geq 5$ with $R_g\geq 0$, and also $Q_g\geq 0$ with $Q_g(p_0)>0$ for some point $p_0\in M$. Let $u$ be a positive solutions to $(\ref{equation1})$. For any geodesic ball $\Omega=B_{\delta}(q)$ in $M$ with $2\delta$ less than injectivity radius of $(M, g)$, we let $x=(x^1,...,x^n)$ be geodesic normal coordinates centered at $q$ so that $g_{ij}(0)=\delta_{ij}$ and the Christoffel symbols $\Gamma_{ij}^k(0)=0$. In this section, the gradient $\nabla$, Laplacian $\Delta$, divergent $\text{div}$, volume element $dx$, area element $ds$, $\sigma$-ball $B_{\sigma}$ and $|x|^2=(x^1)^2+..+(x^n)^2$ are all with respect to the Euclidean metric. Define
\begin{align*}
&\mathcal {P}(u)=\int_{\Omega}(x\cdot \nabla u+ \frac{n-4}{2}u)\Delta^2u dx\\
&=\int_{\Omega} [\frac{n-4}{2}\text{div}(u\nabla(\Delta u)-\Delta u\nabla u)+\text{div}((x\cdot\nabla u)\nabla(\Delta u)-\nabla (x\cdot\nabla u)\Delta u + \frac{1}{2} (\Delta u)^2x)] dx\\
&=\int_{\partial \Omega} \,\frac{n-4}{2}(u\frac{\partial}{\partial \nu}(\Delta u)-\Delta u  \frac{\partial}{\partial \nu}u)+ ((x\cdot\nabla u)\frac{\partial}{\partial \nu}(\Delta u)- \frac{\partial}{\partial \nu} (x\cdot\nabla u)\Delta u + \frac{1}{2} (\Delta u)^2x\cdot \nu) ds,
\end{align*}
where $\nu$ is the outer pointing normal vector of $\partial \Omega$ in Euclidean metric. Then using $(\ref{equation1})$ we have
\begin{align*}
\mathcal {P}(u)&=\int_{\Omega}(x\cdot \nabla u+ \frac{n-4}{2}u)(\Delta^2-P_g)u\,+\,(x\cdot \nabla u+ \frac{n-4}{2}u)\,P_gu\,dx\\
&=\int_{\Omega}(x\cdot \nabla u+ \frac{n-4}{2}u)(\Delta^2-P_g)u\,+\,\frac{n-4}{2}\bar{Q}\,(x\cdot \nabla u+ \frac{n-4}{2}u) u^{\frac{n+4}{n-4}} dx\\
&=\int_{\Omega}(x\cdot \nabla u+ \frac{n-4}{2}u)(\Delta^2-P_g)u\,+\,\frac{(n-4)^2}{4n}\bar{Q}\,\text{div}(u^{\frac{2n}{n-4}}x) dx\\
&=\int_{\Omega}(x\cdot \nabla u+ \frac{n-4}{2}u)(\Delta^2-P_g)u\,dx\,+\,\frac{(n-4)^2}{4n}\bar{Q}\int_{\partial \Omega}(x\cdot \nu)u^{\frac{2n}{n-4}}\, dx.
\end{align*}
Using the expression $(\ref{operatorP})$, we have
\begin{align*}
(\Delta^2-P_g)u=(\Delta^2-\Delta_g^2)u+\text{div}_g(a_nR_g g -b_nRic_g)\nabla_gu - \frac{n-4}{2}Q_gu.
\end{align*}
Since $\Gamma_{ij}^k(0)=0$ and $g_{ij}(0)=\delta_{ij}$,
\begin{align*}
(\Delta^2-\Delta_g^2)u&=(\delta^{pq}\delta^{ij}\nabla_p\nabla_q\nabla_i\nabla_j-g^{pq}g^{ij}\nabla^g_p\nabla^g_q\nabla^g_i\nabla^g_j)u\\
&=(\delta^{pq}\delta^{ij}- g^{pq}g^{ij})\nabla_p\nabla_q\nabla_i\nabla_ju+ O(|x|)|D^3u|+O(1)|D^2u|+O(1)|D u|\\
&=O(|x|^2)|D^4u|+O(|x|)|D^3u|+O(1)|D^2u|+O(1)|D u|.
\end{align*}
It follows that there exists $C>0$ depending on $|Rm_g|_{L^{\infty}(\Omega)}$, $|Q_g|_{C(\Omega)}$ and $|Ric_g|_{C^1(\Omega)}$ such that
\begin{align}\label{ineqbounderrorterms}
|(\Delta^2-P_g)u|\leq C(|x|^2|D^4u|+\,|x|\,|D^3u|+\,|D^2u|+\,|D u|+\,u).
\end{align}

\section{Upper bound estimates near isolated simple blowup points}\label{section6}
In this section we perform a parallel approach of \cite{Li-Zhu} to show upper bound estimates of the solutions to $(\ref{equation1})$ near an isolated simple blowup point, see Proposition \ref{propupperbound}. We start with a Hanark type inequality near an isolated blowup point.
\begin{lem}\label{lemH}
Let $(M^n, g)$ be a closed Riemannian manifold of dimension $5\leq n \leq 9$ with $R_g\geq 0$, and also $Q_g\geq 0$ with $Q_g(p_0)>0$ for some point $p_0\in M$. Let $\{u_j\}$ be a sequence of positive solutions to $(\ref{equation1})$ and $x_j\to \bar{x}$ be an isolated blowup point. Then there exists a constant $C>0$ such that for any $0< r< \frac{\delta}{3}$, we have
\begin{align}\label{ineqH}
\max_{q\in B_{2r}(x_j)- B_{\frac{r}{2}}(x_j)}u_j(q) \leq C \min_{q\in B_{2r}(x_j)- B_{\frac{r}{2}}(x_j)}u_j(q).
\end{align}
\end{lem}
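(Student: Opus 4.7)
The approach is a blow-up (rescaling) argument in which the contradiction is extracted not from the fourth-order Paneitz equation itself, but from the linear second-order inequality $-\Delta_g u+\tfrac{n-4}{4(n-1)}R_g u>0$ that follows from the positivity of $R_{u^{4/(n-4)}g}$ guaranteed by Theorem \ref{thm1}. In $g$-normal coordinates centered at $x_j$ one first sets $\hat u_j(y)=r^{(n-4)/2}u_j(\exp_{x_j}(ry))$ and $(\hat g_j)_{ab}(y)=g_{ab}(ry)$ on the Euclidean annulus $A=\bar B_3\setminus B_{1/6}$. The isolated-blowup bound $(\ref{boundground})$ yields $\hat u_j(y)\leq \bar C|y|^{(4-n)/2}$, hence $\hat u_j\leq C_1$ uniformly on $A$; the equation $(\ref{equation1})$ rescales to $P_{\hat g_j}\hat u_j=\tfrac{n-4}{2}\bar Q\,\hat u_j^{(n+4)/(n-4)}$, with $\hat g_j$ uniformly close to the Euclidean metric. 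Interior $W^{4,p}$ estimates for the uniformly elliptic operator $P_{\hat g_j}$ with $C^k$-bounded coefficients and an $L^\infty$-bounded right-hand side produce uniform $C^{3,\alpha}$ bounds for $\hat u_j$ on $A'=\bar B_{5/2}\setminus B_{1/4}$, yielding subsequential $C^3_{\mathrm{loc}}$-limits.

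If the conclusion of the lemma fails, there exist sequences $r_j\in(0,\delta/3)$ and $p_j,q_j\in \bar B_{2r_j}(x_j)\setminus B_{r_j/2}(x_j)$ with $u_j(p_j)/u_j(q_j)\to\infty$; let $\tilde p_j,\tilde q_j\in \bar B_2\setminus B_{1/2}$ be the rescaled points, converging (along a subsequence) to $\tilde p,\tilde q$. Since $\hat u_j\leq C_1$, the ratio condition forces $\hat u_j(\tilde q_j)\to 0$. After an appropriate renormalization---either by $\sup_{A'}\hat u_j$ or by $\hat u_j(\tilde p_j)$, using the lower bound $\hat u_j(\tilde p_j)\geq c\mu_j^{-1}$ obtained from Theorem \ref{thmlowerbound} with $\mu_j=M_jr_j^{(n-4)/2}$, together with the upper bound from $(\ref{boundground})$---one extracts a non-trivial $C^3$-limit $\hat u$ on $A''$ with $\hat u(\tilde q)=0$ and $\hat u(\tilde p)>0$.

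To reach a contradiction, note that Theorem \ref{thm1} and the transformation rule for the scalar curvature give, exactly as in the proof of Theorem \ref{thmlowerbound}, the pointwise inequality $\Delta_g u_j+\tfrac{2}{n-4}|\nabla_g u_j|^2/u_j<\tfrac{n-4}{4(n-1)}R_g u_j$ on $M$. Discarding the non-negative gradient term yields the linear second-order inequality $-\Delta_g u_j+\tfrac{n-4}{4(n-1)}R_g u_j>0$, which is preserved by the rescaling (and by positive linear normalization), so the (renormalized) rescaled function satisfies $-\Delta_{\hat g_j}\hat u_j+\tfrac{n-4}{4(n-1)}R_{\hat g_j}\hat u_j>0$ on $A$ with uniformly bounded coefficients. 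Passing to the limit, the (renormalized) $\hat u$ is a non-negative $C^2$ supersolution of a uniformly elliptic second-order operator with non-negative bounded zero-order term, vanishing at the interior point $\tilde q$. The strong maximum principle (Gilbarg--Trudinger, Theorem 3.5) then forces $\hat u\equiv 0$ on the connected annulus, contradicting $\hat u(\tilde p)>0$.

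The main technical obstacle is the renormalization step: when $\hat u_j$ degenerates to zero uniformly on the annulus---which occurs precisely when $\mu_j\to\infty$, i.e., when $r_j$ is much larger than the bubble scale $M_j^{-2/(n-4)}$---one must carefully match the isolated-blowup upper bound with the Green's-function lower bound of Theorem \ref{thmlowerbound} to produce a non-trivial $C^3$ limit for the renormalized sequence while keeping the Paneitz right-hand side in a regime where interior $W^{4,p}$ estimates survive. The dimension restriction $5\leq n\leq 9$ most plausibly enters at exactly this point, reflecting constraints on the decay rates of the rescaled blow-up profiles and on the integrability of $\hat u_j^{(n+4)/(n-4)}$ against the singular Green's kernel.
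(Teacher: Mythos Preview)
Your compactness approach is genuinely different from the paper's, which never passes to a limit at all. The paper uses the Green's representation $u_j(p)=\tfrac{n-4}{2}\bar Q\int_M G(p,q)\,u_j(q)^{\frac{n+4}{n-4}}\,dV_g$ and splits the integral over the annulus $\Omega_r=B_{3r}(x_j)\setminus B_{r/3}(x_j)$ and its complement. On $M\setminus\Omega_r$ the kernel $G(\exp_{x_j}(ry),q)$ is uniformly comparable for all $y$ in the middle annulus, which already yields the Harnack bound when that piece dominates. When the $\Omega_r$-piece dominates, a H\"older inequality with exponent $\beta=\tfrac{n+4}{n-4}$ converts it into a pointwise lower bound; the requirement $\beta>\tfrac n4$, equivalently $n\le 9$, is \emph{precisely} where the dimension restriction enters, and a gradient estimate on $\log u_j$ via the Green's representation then closes the argument.

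Your proposal has a real gap at the renormalization step, which you in effect concede. If you normalize by $c_j=\hat u_j(\tilde p_j)$ so that $w_j(\tilde p_j)=1$, you have no uniform upper bound for $w_j$ on an \emph{enlarged} annulus containing $\bar B_2\setminus B_{1/2}$, and without that you cannot run interior $W^{4,p}$ estimates to extract a $C^3$ limit near $\tilde p$. If instead you normalize by $\sup_A\hat u_j$, nothing prevents that supremum from concentrating near $\partial A$, so the limit may vanish identically on the inner annulus and you lose $\hat u(\tilde p)>0$. Either way the missing ingredient is control across adjacent dyadic annuli, which is exactly the Harnack inequality you are trying to prove. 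Your invocation of Theorem~\ref{thmlowerbound} does not rescue this: that theorem is stated for the \emph{global} maximum, and its extension to isolated blowup points (Remark~\ref{remark01}) logically comes after Lemma~\ref{lemlimitmodel}, whose proof uses the present lemma. A further warning sign is that your second-order supersolution argument, if it worked as written, would impose no dimensional restriction whatsoever, whereas the paper's $5\le n\le 9$ hypothesis is used in an essential way.
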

\begin{proof}
Let $x=(x^1,...,x^n)$ be geodesic normal coordinates centered at $x_j$. Here $\delta>0$ can be chosen small so that the coordinates exist. Let $y=r^{-1}x$. Define
\begin{align*}
v_j(y)=r^{\frac{n-4}{2}}u_j(\exp_{x_j}(ry))\,\,\,\text{for}\,\,|y| < 3.
\end{align*}
Then
\begin{align*}
&v_j(y)\leq \bar{C}|y|^{-\frac{n-4}{2}}\,\,\,\text{for}\,\,|y|<3,\\
&v_j(y)\leq 3^{\frac{n-4}{2}}\bar{C}\,\,\,\,\,\,\,\,\text{for}\,\,\frac{1}{3}<|y|<3.
\end{align*}
We denote
\begin{align*}
\Omega_r=B_{3r}(x_j)- B_{\frac{r}{3}}(x_j).
\end{align*}
By Green's representation,
\begin{align*}
v_j(y)&=r^{\frac{n-4}{2}} u_j(exp_{x_j}(ry))=r^{\frac{n-4}{2}}\int_M G(exp_{x_j}(ry), q)u_j(q)^{\frac{n+4}{n-4}}d V_g(q)\\
&=r^{\frac{n-4}{2}}\int_{\Omega_r}G(exp_{x_j}(ry), q)u_j(q)^{\frac{n+4}{n-4}}d V_g(q)+\,r^{\frac{n-4}{2}}\,\int_{M-\Omega_r} G(exp_{x_j}(ry), q)u_j(q)^{\frac{n+4}{n-4}}d V_g(q)
\end{align*}
We {\bf claim} that for $\frac{5}{12}\leq |y|\leq \frac{12}{5}$, if
\begin{align}\label{halfbounds}
v_j(y)\geq 2 r^{\frac{n-4}{2}}\int_{\Omega_r}G(exp_{x_j}(ry), q)u_j(q)^{\frac{n+4}{n-4}}d V_g(q),
\end{align}
then there exists $C>0$ independent of $j$, $x_j$, $r$ and $y$, such that for any $\frac{5}{12}\leq |z|\leq \frac{12}{5}$,
\begin{align}\label{inqH1}
v_j(z)\geq C v_j(y).
\end{align}
In fact, by $(\ref{polebound})$, there exists $C>0$, such that
\begin{align*}
G(\exp_{x_j}(ry), q)\leq C G(\exp_{x_j}(rz), q)
\end{align*}
for $q\in M - \Omega_r$. Therefore,
\begin{align*}
\frac{1}{2}v_j(y)&\leq r^{\frac{n-4}{2}}\int_{M-\Omega_r}G(exp_{x_j}(ry), q)u_j(q)^{\frac{n+4}{n-4}}d V_g(q)\\
&\leq C r^{\frac{n-4}{2}} \int_{M-\Omega_r}G(exp_{x_j}(rz), q)u_j(q)^{\frac{n+4}{n-4}}d V_g(q)\\
&\leq C v_j(z).
\end{align*}
This proves the {\bf claim}.\vskip0.2cm

We denote
\begin{align*}
\mathcal {C}=\{y\in \mathbb{R}^n,\,\frac{5}{12}\leq |y|\leq \frac{12}{5},\,\,\text{so that}\,\,(\ref{halfbounds})\,\,\text{fails for}\,\,y\}.
\end{align*}
 We choose $\frac{5}{12}\leq |y|\leq \frac{12}{5}$ with
 \begin{align*}
 v_j(y)\geq\frac{1}{2} \sup_{\frac{5}{12}\leq|z|\leq \frac{12}{5}}v_j(z).
 \end{align*}
If $y\notin \mathcal {C}$, then using the claim, we are done. If $y\in \mathcal {C}$, we will prove that the Harnack inequality $(\ref{ineqH})$ still holds.

By H$\ddot{\text{o}}$lder's inequality,
\begin{align*}
u_j(\exp_{x_j}(ry))&\leq 2 \int_{\Omega_r}G(exp_{x_j}(ry), q)u_j(q)^{\frac{n+4}{n-4}}d V_g(q)\\
&\leq 2(\int_{\Omega_r}G(exp_{x_j}(ry),q)^{\alpha}d V_g(q))^{\frac{1}{\alpha}}(\int_{\Omega_r}u_j(q)^{\frac{n+4}{n-4}\beta}d V_g(q))^{\frac{1}{\beta}}\\
&\leq C(\alpha) r^{4-n+\frac{n}{\alpha}}(\int_{\Omega_r}u_j(q)^{\frac{n+4}{n-4}\beta}d V_g(q))^{\frac{1}{\beta}}\\
&\leq C(\alpha) r^{4-n+\frac{n}{\alpha}}\,(\bar{C}3^{\frac{n-4}{2}}r^{\frac{4-n}{2}})^{\frac{n+4}{n-4}(1-\frac{1}{\beta})}(\int_{\Omega_r}u_j(q)^{\frac{n+4}{n-4}}d V_g(q))^{\frac{1}{\beta}}\\
&\leq C(\alpha) r^{4-n+\frac{n}{\alpha}}\,(\bar{C}3^{\frac{n-4}{2}}r^{\frac{4-n}{2}})^{\frac{n+4}{n-4}(1-\frac{1}{\beta})}(\int_{\Omega_r}C_4(4r)^{n-4}G(\exp_{x_j}(rz), q)u_j(q)^{\frac{n+4}{n-4}}d V_g(q))^{\frac{1}{\beta}}\\
&\leq C(\alpha)r^{4-n+\frac{n}{\alpha}} (\bar{C}3^{\frac{n-4}{2}}r^{\frac{4-n}{2}})^{\frac{n+4}{n-4}(1-\frac{1}{\beta})}r^{\frac{n-4}{\beta}}u_j(\exp_{x_j}(rz))^{\frac{1}{\beta}}\\
&=C(\alpha, \bar{C},n)r^{(2-\frac{n}{2})(1-\frac{1}{\beta})}u_j(\exp_{x_j}(rz))^{\frac{1}{\beta}}.
\end{align*}
for any $\frac{1}{3}\leq |z|\leq 3$, where $1 < \alpha < \frac{n}{n-4}$, $\frac{1}{\alpha}+\frac{1}{\beta}=1$ so that $\beta> \frac{n}{4}$. Here we have used $(\ref{boundground})$ and $(\ref{polebound})$.

Since
\begin{align*}
\frac{n+4}{n-4}\,>\,\frac{n}{4}
\end{align*}
for $5\leq n\leq 9$, we set $\beta=\frac{n+4}{n-4}$ and obtain
\begin{align}\label{ineqHinside}
u_j(\exp_{x_j}(rz))&\geq C(\bar{C},n)\,r^4\,u_j(\exp_{x_j}(ry))^{\frac{n+4}{n-4}}\\
&\geq C(\bar{C},n)\,r^4\,(2^{-1}u_j(q))^{\frac{n+4}{n-4}},
\end{align}
for all $q\in \,B_{\frac{12r}{5}}(x_j)- B_{\frac{5r}{12}}(x_j)$ and $\frac{1}{2}\leq |z|\leq 2$, where $5\leq n\leq 9$.

For any $\frac{1}{2}\leq |z|\leq 2$,
\begin{align}\label{ineqgradients}
|\nabla_g u_j|(\exp_{x_j}(rz))&\leq \frac{n-4}{2}\bar{Q}\int_{ B_{\frac{12r}{5}}(x_j)- B_{\frac{5r}{12}}(x_j)}|\nabla_gG(\exp_{x_j}(rz), q)|\,u_j(q)^{\frac{n+4}{n-4}}dV_g(q)\\
&+\,\frac{n-4}{2}\bar{Q}\int_{M - \big( B_{\frac{12r}{5}}(x_j)- B_{\frac{5r}{12}}(x_j)\big)}|\nabla_gG(\exp_{x_j}(rz), q)|\,u_j(q)^{\frac{n+4}{n-4}}dV_g(q).
\end{align}
Note that for $\frac{1}{2}\leq |z|\leq 2$,
\begin{align}\label{ineqHoutside}
u_j(\exp_{x_j}(rz))&\geq \,\frac{n-4}{2}\bar{Q}\int_{M - \big( B_{\frac{12r}{5}}(x_j)- B_{\frac{5r}{12}}(x_j)\big)}G(\exp_{x_j}(rz), q)\,u_j(q)^{\frac{n+4}{n-4}}dV_g(q)\\
&\geq C r \int_{M - \big( B_{\frac{12r}{5}}(x_j)- B_{\frac{5r}{12}}(x_j)\big)}|\nabla_g \,G(\exp_{x_j}(rz), q)|\,u_j(q)^{\frac{n+4}{n-4}}dV_g(q),
\end{align}
for a uniform constant $C$ independent of $j$ and the choice of points, where for the last inequality we have used $(\ref{expansion1})$.

 Combining $(\ref{ineqHinside})$, $(\ref{ineqHoutside})$ and $(\ref{ineqgradients})$, for $\frac{1}{2}\leq |z|\leq 2$ we have the gradient estimate
\begin{align*}
|\nabla_g \log(u_j(\exp_{x_j}(rz)))|&=\frac{|\nabla_g u_j(\exp_{x_j}(rz))|}{u_j(\exp_{x_j}(rz))}\\
&\leq\,\frac{1}{u_j(\exp_{x_j}(rz))}\,\frac{n-4}{2}\bar{Q}\int_{ B_{\frac{12r}{5}}(x_j)- B_{\frac{5r}{12}}(x_j)}|\nabla_gG(\exp_{x_j}(rz), q)|\,u_j(q)^{\frac{n+4}{n-4}}dV_g(q)\\
&+\,\frac{1}{u_j(\exp_{x_j}(rz))}\,\frac{n-4}{2}\bar{Q}\int_{M - \big( B_{\frac{12r}{5}}(x_j)- B_{\frac{5r}{12}}(x_j)\big)}|\nabla_gG(\exp_{x_j}(rz), q)|\,u_j(q)^{\frac{n+4}{n-4}}dV_g(q)\\
&\leq\,\frac{n-4}{2}\bar{Q}\int_{ B_{\frac{12r}{5}}(x_j)- B_{\frac{5r}{12}}(x_j)}|\nabla_gG(\exp_{x_j}(rz), q)|\,C(\bar{C}, n)^{-1}r^{-4}2^{-\frac{n+4}{n-4}}dV_g(q)\\
&+\,C^{-1}r^{-1}\\
&\leq C(\bar{C}, n)(r^{3}r^{-4}+r^{-1})\\
&=\,C(\bar{C}, n)r^{-1},
\end{align*}
where $C(\bar{C}, n)$ is some uniform constant depending on $\bar{C}$, the manifold and $n$. For any two points $p,\,q\in B_{2r}(x_j)-B_{\frac{r}{2}}(x_j)$, by the gradient estimate,
\begin{align*}
\frac{u_j(p)}{u_j(q)}\leq e^{C(\bar{C}, n)r^{-1}\,d_g(p,q)}\leq e^{4nC(\bar{C}, n)}.
\end{align*}
This completes the proof of Harnack inequality.

\end{proof}

Next we show that near an isolated blowup point, after rescaling the functions $u_j$ converge to the standard solution in $\mathbb{R}^n$.
\begin{lem}\label{lemlimitmodel}
Let $(M^n, g)$ be a closed Riemannian manifold of dimension $5\leq n \leq 9$ with $R_g\geq 0$, and also $Q_g\geq 0$ with $Q_g(p_0)>0$ for some point $p_0\in M$. Let $\{u_j\}$ be a sequence of positive solutions to $(\ref{equation1})$ and $x_j\to \bar{x}$ be an isolated blowup point. Let $M_j=u_j(x_j)$. For any given $R_j\to +\infty$ and positive numbers $\epsilon_j\to 0$, after possibly passing to a subsequence $u_{k_j}$ and $x_{k_j}$( still denoted as $u_j$ and $x_j$), it holds that
\begin{align}\label{ineqmeasurelimit}
&\|M_j^{-1}u_j(\exp_{x_j}(M_j^{-\frac{2}{n-4}}y))\,-\,\big(1+4^{-1}|y|^2\big)^{-\frac{n-4}{2}}\|_{C^4(B_{2R_j})}\\
&+\|M_j^{-1}u_j(\exp_{x_j}(M_j^{-\frac{2}{n-4}}y))\,-\,\big(1+4^{-1}|y|^2\big)^{-\frac{n-4}{2}}\|_{H^4(B_{2R_j})}\,\leq \epsilon_j,
\end{align}
and
\begin{align}\label{ineqrulerlimit}
\frac{R_j}{\log(M_j)}\to 0,\,\,\text{as}\,\,j\to\,\infty.
\end{align}
\end{lem}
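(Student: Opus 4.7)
The plan is to rescale around $x_j$ exactly as in the proof of Theorem \ref{thmlowerbound}, use the isolated blowup growth bound $(\ref{boundground})$ to extract a $C^4_{loc}$ limit on $\mathbb{R}^n$, identify this limit with the standard bubble via Lin's classification \cite{Lin}, and then make a diagonal choice of subsequence to obtain the uniform convergence on balls of radius $2R_j$ together with the ruler bound $R_j/\log M_j\to 0$.

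In geodesic normal coordinates $x$ centered at $x_j$, set $y=M_j^{2/(n-4)}x$ and
\[
v_j(y)=M_j^{-1}u_j(\exp_{x_j}(M_j^{-2/(n-4)}y)),\qquad (h_j)_{pq}(y)=g_{pq}(M_j^{-2/(n-4)}y),
\]
so that $P_{h_j}v_j=\tfrac{n-4}{2}\bar{Q}\,v_j^{(n+4)/(n-4)}$ and $h_j$ converges to the Euclidean metric in $C^k_{loc}(\mathbb{R}^n)$ for every $k$. The blowup bound $(\ref{boundground})$ gives $v_j(y)\le \bar{C}\,|y|^{(4-n)/2}$ on the domain of definition, while $v_j(0)=1$ is a local maximum. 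Together with Lemma \ref{lemH}, this provides uniform $L^{\infty}$ bounds for $v_j$ on every fixed compact subset of $\mathbb{R}^n$. Standard $W^{4,p}$ and $C^{4,\alpha}$ elliptic estimates for $P_{h_j}$, bootstrapped through the equation, then yield uniform $C^{4,\alpha}_{loc}$ bounds on $v_j$, so after passing to a subsequence $v_j\to v$ in $C^4_{loc}(\mathbb{R}^n)$ with $v\ge 0$, $v(0)=1$, and
\[
\Delta^2 v\,=\,\tfrac{n-4}{2}\bar{Q}\,v^{(n+4)/(n-4)}\qquad\text{on }\mathbb{R}^n.
\]

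To identify $v$, I reproduce the scalar curvature step from Theorem \ref{thmlowerbound}: positivity of $R_g$ and of $R_{u_j^{4/(n-4)}g}$ (Theorem \ref{thm1}), together with the conformal transformation formula for the scalar curvature, gives $\Delta_{h_j}v_j\le \tfrac{n-4}{4(n-1)}R_{h_j}v_j$, which passes to the limit as $\Delta v\le 0$ on $\mathbb{R}^n$. Combined with $v(0)=1$, the strong maximum principle forces $v>0$ on $\mathbb{R}^n$, and Lin's classification theorem \cite{Lin} then pins down
\[
v(y)=\bigl(1+4^{-1}|y|^2\bigr)^{-(n-4)/2},
\]
using that $0$ is a critical point of $v$ (inherited from $x_j$ being a local maximum of $u_j$) and $v(0)=1$.

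The preceding argument shows that for every fixed $R>0$, a subsequence satisfies $\|v_j-v\|_{C^4(B_{2R})}\to 0$, and since $B_{2R}$ has bounded volume this also yields $\|v_j-v\|_{H^4(B_{2R})}\to 0$. Given the prescribed sequences $R_j\to\infty$ and $\epsilon_j\to 0$, a standard diagonal extraction produces indices $k_j$ such that the sum of the $C^4(B_{2R_j})$ and $H^4(B_{2R_j})$ norms of $v_{k_j}-v$ is at most $\epsilon_j$; by further thinning so that $M_{k_j}$ grows arbitrarily fast we simultaneously arrange $R_j/\log M_{k_j}\to 0$, which is possible because $M_j\to\infty$. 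Relabeling $k_j$ as $j$ completes the argument. The main conceptual input is the invocation of Lin's classification, whose hypothesis $v>0$ rests on the scalar curvature inequality above; everything else is a repackaging of the blowup analysis already developed in Theorem \ref{thmlowerbound} together with a diagonal argument, and no new a priori estimate is required.
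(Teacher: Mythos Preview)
Your overall strategy matches the paper's, but there is a real gap at the sentence ``Together with Lemma \ref{lemH}, this provides uniform $L^{\infty}$ bounds for $v_j$ on every fixed compact subset of $\mathbb{R}^n$.'' The isolated blowup bound $v_j(y)\le\bar C|y|^{(4-n)/2}$ controls $v_j$ only away from the origin, and the Harnack inequality of Lemma \ref{lemH} compares max and min on annuli but does not by itself prevent $v_j$ from becoming large on small spheres around $0$. You appeal to Theorem \ref{thmlowerbound}, but there $x_j$ is the \emph{global} maximum of $u_j$, so $v_j\le v_j(0)=1$ for free; in the present lemma $x_j$ is only a \emph{local} maximum, and that one-line bound is unavailable.

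The paper closes this gap by using the scalar curvature inequality $\Delta_{h_j}v_j\le\tfrac{n-4}{4(n-1)}R_{h_j}v_j$ \emph{before} extracting a limit, not just after. Since $R_{h_j}\to 0$ on $|y|\le 2$, the auxiliary function $\eta_j(y)=(1+|y|^2)^{-1}v_j(y)$ satisfies $\Delta_{h_j}\eta_j+b\cdot\nabla\eta_j\le 0$; the minimum principle then gives $\inf_{|y|=r}\eta_j\le\eta_j(0)=1$, so $\inf_{|y|=r}v_j\le 1+r^2$, and now Harnack on spheres yields $\max_{|y|=r}v_j\le C$ for $0<r\le 1$. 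Combined with the isolated blowup bound for $|y|\ge 1$, this gives the uniform bound you asserted. You have all the ingredients for this (you invoke the same scalar curvature inequality later to get $\Delta v\le 0$), but you must deploy it at this earlier stage as well. Once this is fixed, the rest of your argument---elliptic estimates, Lin's classification, and the diagonal extraction---is correct and identical to the paper's.
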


\begin{proof}
The proof is almost the same as in \cite{Li-Zhu}. Let $x=(x^1,...,x^n)$ be geodesic normal coordinates centered at $x_j$, $y=r^{-1}x$ and the metric $h=r^{-2}g$ be the rescaled metric so that $(h_j)_{pq}(y)=(g_j)_{pq}(ry)$ in normal coordinates. Define
\begin{align*}
v_j(y)=M_j^{-1}u_j(\exp_{x_j}(M_j^{-\frac{2}{n-4}}y))\,\,\,\text{for}\,\,|y| < \delta\,M_j^{\frac{2}{n-4}}.
\end{align*}
Then $v_j$ satisfies
\begin{align}
&P_{h_j}v_j(y)=\frac{n-4}{2}\bar{Q}v_j(y)^{\frac{n+4}{n-4}},\,\,\text{for}\,\,|y|\leq\,\delta M_j^{\frac{2}{n-4}},\\
&\label{maximumpoint}v_j(0)=1,\,\,\nabla_{h_j}v_j(0)=0,\\
&\label{ineqboundoutside}0<v_j(y)\leq \bar{C} |y|^{-\frac{n-4}{2}},\,\,\text{for}\,\,|y|\leq\,\delta M_j^{\frac{2}{n-4}}.
\end{align}
We next show that $v_j$ is uniformly bounded. Since $R_{h_j}>0$ and $R_{u_j^{\frac{4}{n-4}}g}>0$ on $M$, by conformal transformation formula of scalar curvature,
\begin{align}\label{ineqmaxiprinc}
\Delta_{h_j}v_j\leq \frac{(n-4)}{4(n-1)}R_{h_j}v_j,
\end{align}
where $R_{h_j}\to 0$ uniformly in $|y|\leq 2$ as $j\to \infty$. Then the function $\eta_j(y)= (1+|y|^2)^{-1}v_j(y)$ satisfies
\begin{align*}
\Delta_{h_j}\eta_j+ \sum_{k=1}^nb_k(y)\partial_k \eta_j(y)\leq 0,
\end{align*}
in $|y|\leq 2$ with some function $b_k(y)$. By maximum principle,
\begin{align}\label{ineqboundS}
\eta_j(0)\geq \inf_{|y|=r}\eta_j(y)\,\,\,\text{for}\,\,0<r\leq 1.
\end{align}
By the Harnack inequality $(\ref{ineqH})$ in Lemma \ref{lemH},
\begin{align}\label{ineqHv}
\max_{|y|=r}v_j(y) \leq C \min_{|y|=r} v_j(y)\,\,\,\text{for}\,\,0<r\leq 1,
\end{align}
where $C$ is independent of $r$ and $j$. The inequalities $(\ref{ineqboundS})$ and $(\ref{ineqHv})$ immediately derive
\begin{align*}
\max_{|y|=r}v_j(y) \leq C \min_{|y|=r} v_j(y)\leq C v_j(0)=C\,\,\,\text{for}\,\,0<r \leq 1.
\end{align*}
Combining this with $(\ref{ineqboundoutside})$, we have for $|y|\leq\,\delta M_j^{\frac{2}{n-4}}$,
\begin{align*}
v_j(y)\leq C,
\end{align*}
with $C$ independent of $j$, $y$ and $r$.

Standard elliptic estimates of $v_j$ imply that, after possibly passing to a subsequence, $v_j\to v$ in $C^4_{loc}$ in $\mathbb{R}^n$ where by $(\ref{maximumpoint})$ and $(\ref{ineqmaxiprinc})$, $v$ satisfies
\begin{align*}
&\Delta^2v(y)=\frac{n-4}{2}\bar{Q}v^{\frac{n+4}{n-4}},\,\,y\in \mathbb{R}^n,\\
&v(0)=1,\,\,\nabla v(0)=0,\\
&\Delta v(y)\leq 0,\,\,y\in \mathbb{R}^n,\\
&v(y)\geq 0,\,\,y\in \mathbb{R}^n.
\end{align*}
By strong maximum principle, $v(y)>0$ in $\mathbb{R}^n$. Then the classification theorem in \cite{Lin} gives that
\begin{align*}
v(y)=(1+4^{-1}|y|^2)^{-\frac{n-4}{2}}.
\end{align*}
Then the lemma follows.
\end{proof}
\begin{Remark}\label{remark01}
From Lemma \ref{lemlimitmodel}, we can see that the proof of Theorem \ref{thmlowerbound} still works at the isolated blowup point $x_j\to \bar{x}$. Therefore, there exists $C>0$ independent of $j>0$ so that for any isolated blowup point $x_j\to \bar{x}$,
\begin{align*}
u_j(q)\geq C u_j(x_j)^{-1}d_g^{4-n}(q, x_j)
\end{align*}
any $q\in M$ such that $d_g(q, x_j)\geq u_j(x_j)^{-\frac{2}{n-4}}$.
\end{Remark}
We now state the upper bound estimate of $u_j$ near the isolated simple blowup points.
\begin{prop}\label{propupperbound}
Let $(M^n, g)$ be a closed Riemannian manifold of dimension $5\leq n \leq 9$ with $R_g\geq 0$, and also $Q_g\geq 0$ with $Q_g(p_0)>0$ for some point $p_0\in M$. Let $\{u_j\}$ be a sequence of positive solutions to $(\ref{equation1})$ and $x_j\to \bar{x}$ be an isolated simple blowup point. Let $\delta_1$ and $\bar{C}$ be the constants defined in Definition \ref{def02} and $(\ref{boundground})$. Then there exists a constant $C$ depending only on $\delta_1$, $\bar{C}$, $\|R_g\|_{C^1(B_{\delta_1}(\bar{x}))}$ and $\|Q_g\|_{C^1(B_{\delta_1}(\bar{x}))}$ such that
\begin{align}\label{inequpperboundlevel}
u_j(p)\leq C u_j(x_j)^{-1}d_g(p, x_j)^{4-n},\,\,\text{for}\,\,d_g(p, x_j)\leq \frac{\delta_1}{2},
\end{align}
 for $\delta_1>0$ small. Moreover, up to a subsequence,
\begin{align}\label{inequpperboundbehavior}
u_j(x_j)u_j(p)\to a G(\bar{x}, p)+b(p)\,\,\text{in}\,\,C_{loc}^4(B_{\delta_1}(\bar{x})-\{\bar{x}\}),
\end{align}
where $G$ is Green's function of the Paneitz operator $P_g$, $a>0$ is a constant and $b(p)\in C^4(B_{\frac{\delta_1}{2}}(\bar{x}))$ satisfies $P_gb=0$ in $B_{\frac{\delta_1}{2}}(\bar{x})$.
\end{prop}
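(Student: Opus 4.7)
The plan is to follow Li-Zhu's strategy, adapted to the Paneitz operator, by decomposing $B_{\delta_1/2}(x_j)$ into an inner bubble region and an outer decay region, proving the pointwise upper bound in two stages and then deducing the convergence. Set $M_j = u_j(x_j)$, fix $R_j\to\infty$ as in Lemma \ref{lemlimitmodel}, and let $\rho_j = R_j M_j^{-2/(n-4)}$. On $B_{\rho_j}(x_j)$ the rescaled sequence is $C^4$-close to the standard bubble $v(y)=(1+|y|^2/4)^{-(n-4)/2}$, so $(\ref{inequpperboundlevel})$ holds there from the $|y|^{4-n}$ decay of $v$; in particular the bound holds on $\partial B_{\rho_j}(x_j)$ with a uniform constant.

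The heart of the argument is to extend $(\ref{inequpperboundlevel})$ to the annulus $\rho_j\leq d_g(p,x_j)\leq \delta_1/2$. The isolated simple hypothesis, together with the inner-region analysis, places the unique critical point of $\hat{u}_j$ inside $B_{\rho_j}(x_j)$ for $j$ large, so $\hat{u}_j$ is monotonically nonincreasing on $[\rho_j,\delta_1)$; combined with the Harnack inequality of Lemma \ref{lemH}, this gives a preliminary but too-weak bound $u_j(q)\leq C d_g(q,x_j)^{-(n-4)/2}$. To upgrade, I will argue by contradiction: assume there exist radii $t_j\in[\rho_j,\delta_1/2]$ with $M_j t_j^{n-4}\bar{u}_j(t_j)\to\infty$, and rescale $u_j$ at scale $t_j$ normalized by $\bar{u}_j(t_j)$. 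Harnack yields uniform bounds on compact subsets of $\mathbb{R}^n\setminus\{0\}$, so after passing to a subsequence one obtains a positive $C^4_{loc}(\mathbb{R}^n\setminus\{0\})$-limit $w_*$ satisfying either the critical biharmonic equation $\Delta^2 w_* = c\, w_*^{(n+4)/(n-4)}$ or, if the coupling $t_j^4 \bar{u}_j(t_j)^{8/(n-4)}$ vanishes, the linear equation $\Delta^2 w_* = 0$, with a singular profile at $0$ inherited from the isolated simple blowup. Applying the Pohozaev identity of Section 5 to $u_j$ on $B_\sigma(x_j)\setminus B_{\rho_j}(x_j)$ for $\sigma\in[\rho_j,\delta_1/2]$, the interior error term is controlled through $(\ref{ineqbounderrorterms})$ together with the weak bound and is negligible in the range $5\leq n\leq 9$, while the boundary integral at the inner radius $\rho_j$ is evaluated through Lemma \ref{lemlimitmodel} as a strictly positive constant coming from the standard bubble. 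The outer boundary integral, expressed through $w_*$, is then forced to have a sign incompatible with the contradiction hypothesis, which yields $(\ref{inequpperboundlevel})$.

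With the sharp bound in hand, set $v_j := M_j u_j$. Inequality $(\ref{inequpperboundlevel})$ gives local uniform bounds on $v_j$ in $B_{\delta_1}(\bar{x})\setminus\{\bar{x}\}$, and $v_j$ satisfies $P_g v_j = \frac{n-4}{2}\bar{Q}\, M_j^{-8/(n-4)} v_j^{(n+4)/(n-4)}$, whose right-hand side tends to zero uniformly on every compact subset away from $\bar{x}$. Elliptic regularity for $P_g$ produces $C^{4,\alpha}_{loc}$ bounds, and a diagonal subsequence converges in $C^4_{loc}(B_{\delta_1}(\bar{x})\setminus\{\bar{x}\})$ to a nonnegative limit $v$ with $P_g v = 0$ there. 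To identify $v$, I plug into the Green's representation $u_j(p)=\frac{n-4}{2}\bar{Q}\int_M G(p,q)\, u_j(q)^{(n+4)/(n-4)}\,dV_g(q)$, multiply by $M_j$, and split the integral into $B_{\rho_j}(x_j)$ and its complement: the inner piece, after the bubble change of variables, converges to $a\, G(\bar{x},p)$ with the positive constant $a = \frac{n-4}{2}\bar{Q}\int_{\mathbb{R}^n}(1+|y|^2/4)^{-(n+4)/2}\,dy$, while the outer piece, bounded via $(\ref{inequpperboundlevel})$, assembles into a smooth limit $b$ on $B_{\delta_1/2}(\bar{x})$ satisfying $P_g b = 0$. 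This delivers $(\ref{inequpperboundbehavior})$.

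The main obstacle I expect is the contradiction step in the middle paragraph: the fourth-order Pohozaev identity contains several boundary terms whose asymptotics under rescaling must be tracked simultaneously, and without a full classification of positive singular solutions of the critical biharmonic equation on $\mathbb{R}^n\setminus\{0\}$ with prescribed isolated simple singularity at the origin, one has to squeeze the needed sign directly out of the Pohozaev identity by a careful asymptotic analysis of the inner bubble profile against the rescaled limit $w_*$. The dimensional restriction $5\leq n\leq 9$ inherited from Lemma \ref{lemH} will be essential here: it ensures both the applicability of the Harnack inequality that feeds the rescaling and that the error $(\Delta^2-P_g)u_j$ contributed through $(\ref{ineqbounderrorterms})$ is subleading relative to the leading boundary contribution.
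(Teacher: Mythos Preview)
Your outline diverges from the paper's argument at the crucial middle step, and the route you propose there has a real gap.

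The paper does \emph{not} obtain the sharp bound $(\ref{inequpperboundlevel})$ by a direct Pohozaev contradiction. Instead it first proves an intermediate estimate (Lemma~\ref{lemupperboundestimates1}) of the form $M_j^{\lambda} u_j(p)\le C d_g(p,x_j)^{4-n+\sigma}$ on the annulus, using the fourth-order maximum principle of Lemma~\ref{lemmaximumprinciple} with an explicit barrier $\varphi_j=B\bar{M}_j\delta_2^{\sigma}r^{-\sigma}+AM_j^{-\lambda}r^{4-n+\sigma}$; the boundary scalar-curvature check required by that maximum principle is what makes this work for the Paneitz operator. From this rough bound the paper proves a sphere estimate (Lemma~\ref{lemupperboundsphere}) via an integral identity for $P_g$ and Corollary~\ref{cor9.1}, and only then runs the rescaling contradiction you describe---but appealing to the sphere estimate, not to Pohozaev. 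Note that even in Li--Zhu for the scalar curvature problem, this step is done by a second-order maximum-principle comparison, not by Pohozaev; the Pohozaev identity enters later, to show isolated blowup points are simple (Proposition~\ref{propisolatedsingularpoints}) and for the positive-mass contradiction (Proposition~\ref{propinequality}).

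Your proposed Pohozaev contradiction is problematic on two counts. First, with only the weak bound $u_j\le C r^{-(n-4)/2}$, the interior error $\int (x\cdot\nabla u_j+\tfrac{n-4}{2}u_j)(\Delta^2-P_g)u_j$ is, via $(\ref{ineqbounderrorterms})$, of order $\int r^{-(n-4)}\cdot r^{-(n-4)/2-2}\cdot r^{n-1}\,dr$, which is not negligible against the boundary terms at the rescaled scale; the paper's Corollary~\ref{corerrorterms} controls this error only \emph{after} the sharp bound is in hand, so invoking it here is circular. Second, and more seriously, the sign you need from the outer boundary integral for the limit $w_*$ is not available without either a classification of singular positive biharmonic solutions on $\mathbb{R}^n\setminus\{0\}$ (which you yourself flag as missing) or the structure Proposition~\ref{propsingularity} provides---and that proposition in turn uses the super-harmonicity of $u^{(n-2)/(n-4)}$ together with the second-order Green's function, a line of reasoning your outline does not invoke. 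Your acknowledgement that this is ``the main obstacle'' is accurate: it is not merely technical bookkeeping but the place where the argument as written does not close.

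Your final paragraph, identifying the limit of $M_j u_j$ by splitting the Green's representation into the bubble core and its complement, is a reasonable alternative to the paper's route through Proposition~\ref{propsingularity}; the paper instead shows directly that the limit $v$ is a positive singular solution of $P_g v=0$ in the punctured ball and then invokes the Appendix analysis to peel off the $aG(\bar{x},\cdot)$ singularity. Either approach can be made to work once the sharp bound is established.
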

The proof of the proposition follows after a series of lemmas.

We first give a rough estimate of upper bound of $u_j$ near the isolated simple blowup points.
\begin{lem}\label{lemupperboundestimates1}
Under the condition in Proposition \ref{propupperbound}, assume $R_j\to \infty$ and $0< \epsilon_j<e^{-R_j}$ satisfy $(\ref{ineqmeasurelimit})$ and $(\ref{ineqrulerlimit})$. Denote $M_j=u_j(x_j)$. Then for any small number $0<\sigma<\frac{1}{100}$, there exists $0<\delta_2<\delta_1$ and $C>0$ independent of $j$ such that
\begin{align}
&\label{ineqlembound1}M_j^{\lambda}u_j(p)\leq C d_g(p, x_j)^{4-n+\sigma},\\
&M_j^{\lambda}|\nabla_g^k u_j(p)|\leq C d_g(p, x_j)^{4-n-k+\sigma},
\end{align}
for any $p$ in $R_jM_j^{-\frac{2}{n-4}}\leq d_g(p, x_j)\leq \delta_2$ and $1\leq k\leq 4$, where $\lambda=1-\frac{2}{n-4}\sigma$.
\end{lem}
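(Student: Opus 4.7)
The plan is to adapt the Li-Zhu blow-up scheme \cite{Li-Zhu} to the fourth-order Paneitz setting. I will establish $(\ref{ineqlembound1})$ first by a supersolution/comparison argument using Lemma \ref{lemmaximumprinciple}, and then derive the derivative bounds from standard elliptic rescaling.

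The first step is a preliminary but weaker decay bound. By Lemma \ref{lemlimitmodel}, the rescaled solution $v_j(y)=M_j^{-1}u_j(\exp_{x_j}(M_j^{-2/(n-4)}y))$ is $C^4$-close on $B_{2R_j}$ to the bubble $V(y)=(1+|y|^2/4)^{-(n-4)/2}$. Since $r^{(n-4)/2}V$ has its unique critical point at $r=2$, the corresponding critical point of $\hat u_j$ guaranteed by the isolated simple blow-up assumption lies inside $r=R_jM_j^{-2/(n-4)}$ for $j$ large, so $\hat u_j$ is strictly decreasing on $(R_jM_j^{-2/(n-4)},\delta_1)$. Combining this monotonicity with the Harnack inequality of Lemma \ref{lemH} applied to dyadic shells yields the preliminary bound
$$u_j(p)\le C R_j^{-(n-4)/2}\,d_g(p,x_j)^{-(n-4)/2},\qquad R_jM_j^{-2/(n-4)}\le d_g(p,x_j)\le \delta_1/3.$$

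Next I upgrade to the $\sigma$-improved decay $(\ref{ineqlembound1})$ by a supersolution comparison. In geodesic normal coordinates at $x_j$, set $r=|x|$ and take
$$\Phi_j(p)=M_j^{-\lambda}\bigl(A_1\, r^{4-n+\sigma}+A_2\, r^{-\sigma}\bigr),$$
with $A_1,A_2>0$ large and $j$-independent. Using the identity $\Delta^2 r^\alpha=\alpha(\alpha-2)(\alpha+n-2)(\alpha+n-4)r^{\alpha-4}$, each radial power produces a strictly positive leading contribution proportional to $\sigma$, namely $c(n)\sigma r^{-n+\sigma}$ for $\alpha=4-n+\sigma$ and $c(n)\sigma r^{-4-\sigma}$ for $\alpha=-\sigma$. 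Using the error estimate $(\ref{ineqbounderrorterms})$ to control $P_g-\Delta^2$ and the preliminary bound to estimate the potential $u_j^{8/(n-4)}\le CR_j^{-4}r^{-4}$ throughout the annulus, the nonlinear term $\tfrac{n-4}{2}\bar Q\,u_j^{8/(n-4)}\Phi_j$ is absorbed into the $\Delta^2\Phi_j$ piece for $R_j$ large and $\delta_2$ small, so
$$P_g\Phi_j\ge \tfrac{n-4}{2}\bar Q\, u_j^{8/(n-4)}\Phi_j\qquad\text{on }\{R_jM_j^{-2/(n-4)}\le r\le \delta_2\}.$$
The two-term structure of $\Phi_j$ is what enables the boundary comparison: on the inner sphere the bubble profile gives $u_j\simeq M_jR_j^{4-n}$, matched and dominated by $A_1 M_j^{-\lambda}r^{4-n+\sigma}=A_1 M_jR_j^{4-n+\sigma}$ for $A_1$ large; on the outer sphere $r=\delta_2$, splitting the Green's representation into the bubble region $\{|y|\le R_j\}$ (contributing $\sim M_j^{-1}G(x_j,\cdot)$) and its complement (controlled by the preliminary bound and the integrability of $G^{(n+4)/(n-4)}$) gives $u_j(p)\le CM_j^{-1}\delta_2^{4-n}$, dominated by $A_1 M_j^{-\lambda}\delta_2^{4-n+\sigma}$ because $M_j^{-1+\lambda}=M_j^{-2\sigma/(n-4)}\to 0$.

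Applying Lemma \ref{lemmaximumprinciple} to $\Phi_j - u_j$ on the annular region, after verifying the positive scalar curvature condition of $\Phi_j^{4/(n-4)}g$ on the two boundary spheres (the inner sphere inherits positivity from the bubble limit as in Theorem \ref{thmlowerbound}, the outer sphere by direct computation on $\Phi_j$), yields $\Phi_j\ge u_j$ throughout, which is $(\ref{ineqlembound1})$. The derivative estimates for $1\le k\le 4$ follow by elliptic rescaling: for $p$ with $d_g(p,x_j)=r$, rescale $u_j$ to a function on a unit ball, apply interior $W^{4,s}$-regularity for $P_g$ (whose $L^\infty$-input is now bounded by $(\ref{ineqlembound1})$), and scale back. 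The main obstacle is the supersolution step, specifically engineering $\Phi_j$ so that the boundary comparison holds simultaneously at the two very different scales $r=R_jM_j^{-2/(n-4)}$ and $r=\delta_2$, where $u_j$ behaves so differently; the preliminary rough bound is precisely what makes the nonlinear potential $u_j^{8/(n-4)}$ small enough (of order $R_j^{-4}r^{-4}$) for the $\sigma$-slack in $\Delta^2 r^{4-n+\sigma}$ to absorb it.
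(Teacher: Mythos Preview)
Your barrier and the rough decay step are essentially the paper's, but the outer-boundary step has a genuine gap. You claim that on $\partial B_{\delta_2}(x_j)$ the Green's representation yields $u_j(p)\le CM_j^{-1}\delta_2^{4-n}$, and hence $u_j\le\Phi_j$ there. The bubble-region contribution does give $O(M_j^{-1})$, but the annulus contribution is only controlled by the preliminary bound $u_j\le CR_j^{-(n-4)/2}r^{-(n-4)/2}$, which produces a term of order $R_j^{-(n+4)/2}$, not $M_j^{-1}$ (nor even $M_j^{-\lambda}$): since $R_j/\log M_j\to 0$ by $(\ref{ineqrulerlimit})$, $R_j^{-(n+4)/2}$ is enormously larger than any negative power of $M_j$. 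Moreover, the Green's representation integrates over all of $M$, and outside $B_\delta(\bar x)$ you have no a priori control on $u_j$ in terms of $M_j$ at all. So your barrier $\Phi_j=M_j^{-\lambda}(A_1r^{4-n+\sigma}+A_2r^{-\sigma})$ does not dominate $u_j$ on the outer sphere, and the comparison cannot start.

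The paper circumvents this precisely by \emph{not} trying to bound $u_j$ on $\partial B_{\delta_2}$ in advance: it sets $\bar M_j=\sup_{\partial B_{\delta_2}(x_j)}u_j$ and takes the barrier $\varphi_j=B\bar M_j\delta_2^{\sigma}r^{-\sigma}+AM_j^{-\lambda}r^{4-n+\sigma}$, so the outer boundary inequality holds trivially. After obtaining $u_j\le\varphi_j$ on the whole annulus, it evaluates at a fixed intermediate radius and uses the monotonicity of $\hat u_j$ together with $\frac{n-4}{2}>\sigma$ to absorb the $\bar M_j$ term and deduce $\bar M_j\le C(n,\sigma,\delta_2)M_j^{-\lambda}$; substituting back gives $(\ref{ineqlembound1})$. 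This bootstrap is the missing idea. Two smaller points: Lemma \ref{lemmaximumprinciple} requires positive scalar curvature of $(\varphi_j-u_j)^{4/(n-4)}g$ on the boundary, not of $\Phi_j^{4/(n-4)}g$, and this needs the inequality $(\ref{ineqboundary3})$ plus the derivative bound $(\ref{ineqboundary1})$; and since $P_g(\varphi_j-u_j)\ge \tfrac{n-4}{2}\bar Q\,u_j^{8/(n-4)}(\varphi_j-u_j)$ is only nonnegative once $\varphi_j\ge u_j$, one needs the continuity argument in $t\ge 1$ (take the smallest $t_j$ with $t_j\varphi_j-u_j\ge 0$ and derive a contradiction) rather than applying the maximum principle directly.
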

\begin{proof}
The outline of the proof is from \cite{Li-Zhu}, while the use of our maximum principle here is more subtle. Let $x=(x^1,...,x^n)$ be geodesic normal coordinates centered at $x_j$ for $d_g(p, x_j)\leq \delta$. Let $r=|x|$. For any $0<\delta_2<\delta_1$ to be chosen, let
\begin{align*}
\Omega_j=\{p\in M,\,\,R_jM_j^{-\frac{2}{n-4}}\leq d_g(p, x_j)\leq \delta_2\}.
\end{align*}
We want to use maximum principle to get the upper bound of $u_j$. Before construction of the barrier function on $\Omega_j$, we first go through some properties of $u_j$.

From Lemma \ref{lemlimitmodel}, we know that
\begin{align}\label{ineqoutsidelim}
u_j(p)\leq C R_j^{4-n}M_j,\,\,\text{for}\,\,d_g(p, x_j)=R_jM_j^{-\frac{2}{n-4}},
\end{align}
and there exists a critical point $r_0$ of $\hat{u}_j(r)$ in $0< r < R_jM_j^{-\frac{2}{n-4}}$; moreover, for $r>r_0$, $\hat{u}_j(r)$ is decreasing. By the assumption that $\bar{x}$ is an isolated simple blowup point, $\hat{u}_j$ is strictly decreasing for $R_jM_j^{-\frac{2}{n-4}}< r < \delta_1$. Therefore, combining with the Harnack inequality $(\ref{ineqH})$, for $p \in \Omega_j$ we have
\begin{align*}
d_g(p, x_j)^{\frac{n-4}{2}}u_j(p)&\leq C \bar{u}_j(d_g(p, x_j))\\
&\leq C R_j^{\frac{n-4}{2}}M_j^{-1}\bar{u}_j(R_jM_j^{-\frac{2}{n-4}})\\
&\leq C R_j^{\frac{n-4}{2}}M_j^{-1} R_j^{4-n}M_j\\
&=C R_j^{-\frac{n-4}{2}}.
\end{align*}
This leads to
\begin{align}\label{ineqdecreasing}
u_j(p)^{\frac{8}{n-4}}\leq C R_j^{-4} d_g(p, x_j)^{-4},\,\,\text{for}\,\,R_jM_j^{-\frac{2}{n-4}}< r < \delta_1.
\end{align}
We now define a linear elliptic operator on $\Omega_j$
\begin{align*}
L_j\phi=P_g\phi - \frac{n-4}{2}\bar{Q}u_j^{\frac{8}{n-4}}\phi,\,\,\text{for}\,\,\phi\in C^4(\Omega_j).
\end{align*}
Therefore
\begin{align*}
L_ju_j=0,\,\,\text{in}\,\,\Omega_j.
\end{align*}
Set
\begin{align*}
\varphi(p)=B\bar{M}_j\delta_2^{\sigma}d_g(p, x_j)^{-\sigma}+A M_j^{-1+\frac{2}{n-4}\sigma}d_g(p, x_j)^{-n+4+\sigma},\,\,p\in \Omega_j,
\end{align*}
where $A, B>0$ are constant to be determined, $0< \sigma <\frac{1}{100}$ and
\begin{align*}
\bar{M}_j=\sup_{d_g(p, x_j)=\delta_2}u_j\,\leq \bar{C} \delta_2^{-\frac{n-4}{2}}.
\end{align*}
There exists $C>0$, for $m>0$, $1\leq k\leq 4$ and any $p\in M$ fixed and $q \in M$ so that $d_g(p, q)<\delta_2$ with $\delta_2$ less than the injectivity radius,
\begin{align}\label{ineqdistance}
|D_g^k d_g(p, q)^{-m}|\leq C m^k d_g(p, q)^{-m-k}.
\end{align}
It is easy to check that there exists $\delta_2>0$ independent of $j$ so that in $\Omega_j$
\begin{align*}
&|(P_g-\Delta_0^2)|x|^{-\sigma}|\leq 100^{-1}|P_g (|x|^{-\sigma})|,\\
&|(P_g-\Delta_0^2)|x|^{-n+4+\sigma}|\leq 100^{-1}|P_g(|x|^{-n+4+\sigma})|,
\end{align*}
where $|x|=d_g(p, x_j)$ and $\Delta_0$ is the Euclidean Laplacian in the normal coordinates.
It is easy to check that for $0<m< n-4$ and $0< r < \delta_2$,
\begin{align}\label{ineqordermainterm}
&-\Delta_0 r^{-m}=-m(m+2-n)r^{-m-2}>0\\
&\Delta_0^2r^{-m}=m(m+2-n)(m+2)(m+4-n)r^{-m-4}>0.
\end{align}
But for $p\in \Omega_j$, by $(\ref{ineqdecreasing})$
\begin{align*}
\frac{n-4}{2}\bar{Q}u_j(p)^{\frac{8}{n-4}}r^{-m}\leq \frac{n-4}{2}\bar{Q}C R_j^{-4}r^{-m-4}
\end{align*}
Therefore,
\begin{align*}
L_j\varphi_j\geq 0\,\,\text{in}\,\,\Omega_j
\end{align*}
for $j$ large. By $(\ref{ineqoutsidelim})$, for $A>1$,
\begin{align}\label{ineqinnerboundary}
u_j(p)< \varphi_j(p),\,\,\text{for}\,\,d_g(p, x_j)=R_jM_j^{-\frac{2}{n-4}}.
\end{align}
Also, for $B>1$,
\begin{align}\label{ineqouterboundary}
u_j(p)< \varphi_j(p),\,\,\text{for}\,\,d_g(p, x_j)=\delta_2.
\end{align}
We now want to check sign of the scalar curvature $R_{(\varphi_j - u_j)^{\frac{4}{n-4}}g}$ near $\partial \Omega_j$. By conformal transformation formula, it has the same sign as
\begin{align*}
-\frac{4(n-1)}{(n-4)}\Delta_g (\varphi_j - u_j)- \frac{8(n-1)}{(n-4)^2}\frac{|\nabla_g(\varphi_j - u_j)|^2}{(\varphi_j - u_j)}+R_g(\varphi_j - u_j)
\end{align*}
Combining $(\ref{boundground})$ and standard interior estimate of $(\ref{equation1})$, we have for $k=1,\,2$,
\begin{align}\label{ineqboundary1}
|D_g^ku_j(p)|\leq C d_g(p, x_j)^{-\frac{n-4}{2}-k}
\end{align}
for some constant independent of $j$, where $p\in \Omega_j$.
It is easy to check that for $0<m<n-4$,
\begin{align}\label{ineqboundary2}
\Delta_0|x|^{-m}+\frac{2}{n-4} \frac{|\nabla_0 |x|^{-m}|^2}{|x|^{-m}}&=\big(m(m+2-n)\,+\,\frac{2m^2}{n-4})|x|^{-m-2}\\
&=\frac{m(n-2)(m-(n-4))}{n-4}|x|^{-m-2}<0.
\end{align}
Also, note that for any positive functions $\phi_1,\,\phi_2\in C^2$, it holds that
\begin{align}\label{ineqboundary3}
\Delta_0(\phi_1+\phi_2)+\frac{2}{n-4} \frac{|\nabla_0 (\phi_1+\phi_2)|^2}{\phi_1+\phi_2}\leq (\Delta_0 \phi_1 +\frac{2}{n-4} \frac{|\nabla_0 (\phi_1)|^2}{\phi_1})+(\Delta_0 \phi_2 +\frac{2}{n-4} \frac{|\nabla_0 (\phi_2)|^2}{\phi_2}).
\end{align}
Here we have used the fact that for $a,b,c,d>0$
\begin{align*}
&\frac{2c\,d}{a+b}\leq \frac{b\,c^2}{a(a+b)}+\frac{a\,d^2}{b(a+b)},\,\,\text{so that}\\
&\frac{(c+d)^2}{a+b}=\frac{c^2+2c\,d+d^2}{a+b}\leq \frac{c^2}{a}+\frac{d^2}{b}.
\end{align*}
Using $(\ref{ineqinnerboundary})$-$(\ref{ineqboundary2})$ and $(\ref{ineqboundary3})$, we can choose $A, B>100^n(1+ C)$ independent of $j$ and $t$ with $C>0$ in $(\ref{ineqboundary1})$ so that
\begin{align}\label{ineqcurvatureboundary}
-\frac{4(n-1)}{(n-4)}\Delta_g (t\varphi_j - u_j)- \frac{8(n-1)}{(n-4)^2}\frac{|\nabla_g(t\varphi_j - u_j)|^2}{(t\varphi_j - u_j)}+R_g(t\varphi_j - u_j)>0\,\,\,\text{on}\,\,\partial \Omega_j,
\end{align}
for all $t\geq 1$. For $t\geq 1$, we define
\begin{align*}
\phi_j^t(p)=t\varphi_j(p)-u_j(p),\,\,p\in \Omega_j.
\end{align*}
Then
\begin{align}\label{ineqlinearineqn}
0\leq L_j \phi_j^t=P_g\phi_j^t - \frac{n-4}{2}\bar{Q}\phi_j^t\,\,\,\,\text{in} \,\,\Omega_j.
\end{align}
If
\begin{align}\label{inequpperbound1}
\phi_j^1=\varphi_j-u_j\geq 0\,\,\,\,\text{in}\,\,\Omega_j,
\end{align}
 then we are done. Else, since $\Omega_j$ is compact, we pick up the smallest number $t_j>1$ so that $\phi_j^{t_j}\geq 0$. Therefore, by $(\ref{ineqlinearineqn})$
 \begin{align}\label{ineqlinearineqn1}
 P_g\phi_j^{t_j}\geq \frac{n-4}{2}\bar{Q}\phi_j^{t_j}\geq 0.
 \end{align}
 Combining with $(\ref{ineqinnerboundary})$, $(\ref{ineqouterboundary})$, $(\ref{ineqcurvatureboundary})$ and $(\ref{ineqlinearineqn1})$, the maximum principle in Lemma \ref{lemmaximumprinciple} implies
\begin{align*}
\phi_j^{t_j}>0\,\,\text{in}\,\,\Omega_j,
\end{align*}
contradicting with the choice of $t_j$. Therefore, $(\ref{inequpperbound1})$ holds. Now for $p \in \Omega_j$, we use Lemma \ref{lemH}, monotonicity of $\hat{u}_j$, and apply $(\ref{inequpperbound1})$ at $p$ to obtain
\begin{align*}
\delta_2^{\frac{n-4}{2}}\bar{M}_j&\leq C \hat{u}_j(\delta_2)\leq C \hat{u}_j(d_g(p, x_j))\\
&\leq Cd_g(p, x_j)^{\frac{n-4}{2}}(B\bar{M}_j\delta_2^{\sigma}d_g(p, x_j)^{-\sigma}+A M_j^{-\lambda}d_g(p, x_j)^{4-n+\delta}).
\end{align*}
Here $\frac{n-4}{2}> \sigma$. We choose $p$ with $d_g(p, x_j)$ to be a small fixed number depending on $n, \sigma, \delta_2$ to obtain
\begin{align*}
\bar{M}_j\leq C(n, \sigma, \delta_2)M_j^{-\lambda}.
\end{align*}
 Therefore, the inequality $(\ref{ineqlembound1})$ is then established from $(\ref{inequpperbound1})$, and based on standard interior estimates for derivatives of $u_j$, the lemma is proved.
\end{proof}

\begin{lem}\label{lemupperboundsphere}
Under the assumption in Proposition \ref{propupperbound}, for any $0< \rho\leq \frac{\delta_2}{2}$ there exists a constant $C(\rho)>0$ such that
\begin{align*}
\limsup_{j\to \infty}\max_{p\in \partial B_{\rho}(x_j)} u_j(p) M_j \leq C(\rho).
\end{align*}
where $M_j=u_j(x_j)$.
\end{lem}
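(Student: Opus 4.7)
The plan is to substitute a point $p \in \partial B_\rho(x_j)$ into the Green's representation
\begin{align*}
u_j(p) \;=\; \frac{n-4}{2}\bar{Q}\int_M G(p, q)\, u_j(q)^{\frac{n+4}{n-4}}\, dV_g(q)
\end{align*}
and show that each piece of the integral over a natural tripartition of $M$ contributes at most $O(M_j^{-1})$ with a constant depending only on $\rho$. Combined with the Harnack inequality in Lemma \ref{lemH} on the sphere $\partial B_\rho(x_j)$, this controls $\max_{\partial B_\rho(x_j)} u_j$.

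Concretely, I decompose $M$ as the union of $B_{r_j}(x_j)$, the middle annulus $B_{\delta_2/2}(x_j) \setminus B_{r_j}(x_j)$, and the outer region $M \setminus B_{\delta_2/2}(x_j)$, where $r_j = R_j M_j^{-2/(n-4)}$. On the inner ball, the rescaling $y = M_j^{2/(n-4)}(x - x_j)$ together with Lemma \ref{lemlimitmodel} yields
\begin{align*}
M_j \int_{B_{r_j}(x_j)} u_j^{\frac{n+4}{n-4}}\, dV_g \;\longrightarrow\; \int_{\mathbb{R}^n} \bigl(1 + 4^{-1}|y|^2\bigr)^{-\frac{n+4}{2}} dy,
\end{align*}
which is finite and independent of $j$; since $d_g(p, q) \geq \rho/2$ for $q$ in this region, the bound $G(p, q) \leq C \rho^{4-n}$ gives a contribution of order $C\rho^{4-n}$ to $M_j u_j(p)$.

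On the middle annulus I will apply Lemma \ref{lemupperboundestimates1} in the form $u_j(q) \leq C M_j^{-\lambda} d_g(q, x_j)^{4-n+\sigma}$ with $\lambda = 1 - \tfrac{2\sigma}{n-4}$; setting $\epsilon = \tfrac{\sigma(n+4)}{n-4}$ this gives $u_j^{(n+4)/(n-4)} \leq C M_j^{-\lambda(n+4)/(n-4)} d_g(q, x_j)^{-(n+4)+\epsilon}$. The annulus is then split further at radii $\rho/2$ and $2\rho$ to separate the singularity of $G$ at $q = p$ from the concentration at $x_j$: for $q$ with $d_g(q, x_j) \leq \rho/2$ one uses $G(p,q) \leq C\rho^{4-n}$ together with the radial integral $\int_{r_j}^{\rho/2} r^{-5+\epsilon}\, dr$ dominated by $r_j^{-4+\epsilon}$; for $q$ near $p$ one uses the volume-type bound $\int G(p, q)\, dV_g \leq C \rho^4$; and for $q$ with $d_g(q,x_j) \geq 2\rho$ one uses $d_g(p, q) \geq \tfrac12 d_g(q, x_j)$. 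On the exterior, iterating Lemma \ref{lemH} from the sharp pointwise bound $u_j \leq C M_j^{-\lambda}$ on $\partial B_{\delta_2/2}(x_j)$ given by $(\ref{ineqlembound1})$ extends this bound globally, and the resulting contribution carries a strictly negative power of $M_j$.

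The delicate point is the algebra of exponents in the first sub-piece of the annulus (close to $x_j$), where the available bound on $u_j$ is far from the model decay $d_g^{4-n}$. Substituting $r_j = R_j M_j^{-2/(n-4)}$ into $r_j^{-4+\epsilon}$, one must verify
\begin{align*}
1 \,-\, \lambda\,\frac{n+4}{n-4} \,+\, \frac{8-2\epsilon}{n-4} \;=\; 0,
\end{align*}
so that the net power of $M_j$ in this contribution to $M_j u_j(p)$ is exactly $M_j^0$, leaving only the harmless factor $\rho^{4-n} R_j^{-4+\epsilon} \to 0$. This precise cancellation dictates the choices $\lambda = 1 - \tfrac{2\sigma}{n-4}$ and $\epsilon = \tfrac{\sigma(n+4)}{n-4}$ made earlier; without this averaging through the Green's representation, the pointwise bound of Lemma \ref{lemupperboundestimates1} alone yields only $M_j u_j(p) \leq C M_j^{2\sigma/(n-4)} \rho^{4-n+\sigma}$, which is unbounded in $j$.
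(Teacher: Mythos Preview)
Your treatment of the inner ball $B_{r_j}(x_j)$ and the middle annulus is correct, and the exponent bookkeeping for the sub-piece nearest $x_j$ is done carefully. The gap is in the exterior region $M\setminus B_{\delta_2/2}(x_j)$. Lemma~\ref{lemH} is a Harnack inequality on annuli $B_{2r}(x_j)\setminus B_{r/2}(x_j)$ with $0<r<\delta/3$ only; it cannot be iterated past $B_{\delta}(x_j)$ and says nothing about $u_j$ on $M\setminus B_{\delta}(x_j)$. The hypotheses of Proposition~\ref{propupperbound} assert only that $x_j\to\bar{x}$ is an isolated simple blowup point; they give no control of $u_j$ outside $B_{\delta}(x_j)$, and indeed in the applications (the rescaled functions $v_j$ in the proof of Proposition~\ref{propupperbound}, and the setting of Proposition~\ref{propdistanceofsingularities} with a second blowup point at $\bar{y}$) the sequence is \emph{not} globally $O(M_j^{-\lambda})$. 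Since every piece of the Green's representation is nonnegative, an uncontrolled exterior piece leaves $M_j u_j(p)$ uncontrolled, and the argument does not close.

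The paper's proof is entirely local and avoids the exterior. It normalizes by $u_j(p_\rho)^{-1}$ and passes to a limit $\xi_j\to\xi$ with $P_g\xi=0$ in the punctured ball; the isolated \emph{simple} blowup hypothesis forces $r^{(n-4)/2}\bar\xi(r)$ to be non-increasing, so $\bar{x}$ is a genuine singularity of $\xi$, and Corollary~\ref{cor9.1} then gives a strictly positive boundary flux $\int_{\partial B_\rho}\bigl(\partial_\nu\Delta_g\xi-\cdots\bigr)\,ds_g>m>0$. Comparing this, via the divergence theorem on $B_\rho(x_j)$, with the local mass bound $\int_{B_\rho(x_j)}u_j^{(n+4)/(n-4)}\,dV_g\leq CM_j^{-1}$ (which is essentially your inner-ball plus middle-annulus estimate) yields $u_j(p_\rho)^{-1}\cdot CM_j^{-1}\geq m$, hence $M_j u_j(p_\rho)\leq C$.
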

\begin{proof}
By Lemma \ref{lemH}, it suffices to show the inequality for some fixed small constant $\rho>0$.

For any $p_{\rho}\in \partial B_{\rho}(x_j)$, we denote $\xi_j(p)=u_j(p_{\rho})^{-1}u_j(p)$. Then $\xi_j$ satisfies
\begin{align*}
P_g\xi_j(p)=\frac{n-4}{2}\bar{Q}u_j(p_{\rho})^{\frac{8}{n-4}}\xi_j(p)^{\frac{n+4}{n-4}}.
\end{align*}
For any compact subset $K \subseteq B_{\frac{\delta_2}{2}}(\bar{x})- \{\bar{x}\}$, there exists $C(K)>0$ such that for $j$ large
\begin{align*}
C(K)^{-1}\leq \xi_j\leq C(K)\,\,\text{in}\,\,K.
\end{align*}
Moreover, by Lemma \ref{lemH}, there exists $C>0$ independent of $0< r< \delta_2$ and $j$ so that
\begin{align}\label{ineqHsphere1}
\max_{B_r(x_j)- B_{\frac{r}{2}}(x_j)}u_j\leq C\inf_{B_r(x_j)- B_{\frac{r}{2}}(x_j)}u_j.
\end{align}
By the estimates $(\ref{ineqlembound1})$, $u_j(p_{\rho})\to 0$ as $j\to \infty$. Therefore, by interior estimates of $\xi_j$ , up to a subsequence,
\begin{align*}
\xi_j\to \xi\,\,\text{in}\,\,C_{loc}^4(B_{\frac{\delta_2}{2}}(\bar{x})- \{\bar{x}\}),
\end{align*}
with $\xi>0$ such that
\begin{align*}
P_g\xi =0\,\,\text{in}\,\,B_{\frac{\delta_2}{2}}(\bar{x})- \{\bar{x}\},
\end{align*}
and $\xi$ satisfies $(\ref{ineqHsphere1})$ for $0<r<\frac{\delta_2}{2}$. Moreover, for $0<r<\rho$ and $\bar{\xi}(r)=|\partial B_r|^{-1}\int_{\partial B_r(\bar{x})}\xi ds_g$,
\begin{align*}
\lim_{j\to\infty}u_j(p_{\rho})^{-1}r^{\frac{n-4}{2}}\bar{u}_j(r)=r^{\frac{n-4}{2}}\bar{\xi}(r).
\end{align*}
Since $x_j\to \bar{x}$ is an isolated simple blowup point, $r^{\frac{n-4}{2}}\bar{\xi}(r)$ is non-increasing in $0<r<\rho$. Therefore, $\bar{x}$ is not a regular point of $\xi$.

Recall that
\begin{align*}
-\frac{4(n-1)}{n-2}\Delta_gu_j^{\frac{n-2}{n-4}}+R_gu_j^{\frac{n-2}{n-4}}=R_{u_j^{\frac{4}{n-4}}g}u_j^{\frac{n+2}{n-4}}\geq 0.
\end{align*}
Passing to the limit, we have
\begin{align}
-\frac{4(n-1)}{n-2}\Delta_g\xi^{\frac{n-2}{n-4}}+R_g\xi^{\frac{n-2}{n-4}}\geq 0,
\end{align}
in $B_{\frac{\delta_2}{2}}(\bar{x})- \{\bar{x}\}$.

 For later use, if $Q_g$ is not pointwisely non-negative in $B_{\rho}(\bar{x})$, by Theorem \ref{thm1} we choose $\tilde{g}=\phi^{-\frac{4}{n-4}}g$ so that $R_{\tilde{g}}>0$ and $Q_{\tilde{g}}\geq 0$ in $M$. Then $\tilde{\xi}=\phi \xi$ is still singular at $\bar{x}$ and all above information for the limit holds with $\xi$ and $g$ replaced by $\tilde{\xi}$ and $\tilde{g}$. So from now on we assume that $R_{g}>0$ and $Q_g\geq 0$.

From Corollary \ref{cor9.1}, for $\rho>0$ small, there exists $m>0$ independent of $j$ such that for $j$ large
\begin{align}\label{ineqint1}
\int_{B_{\rho}(x_j)}(P_g\xi_j-\frac{n-4}{2}Q_g\xi_j) d V_g&=\int_{\partial B_{\rho}(x_j)}\big(\frac{\partial}{\partial \nu}\Delta_g \xi_j-(a_nR_g\frac{\partial}{\partial \nu} \xi_j-b_nRic_g(\nabla_g \xi_j, \nu))\big) d s_g\\
&=\int_{\partial B_{\rho}(x_j)}\big(\frac{\partial}{\partial \nu}\Delta_g \xi-(a_nR_g\frac{\partial}{\partial \nu} \xi-b_nRic_g(\nabla_g \xi, \nu))\big) d s_g+o(1)>m.
\end{align}
On the other hand, nonnegativity of $Q_g$ implies
\begin{align}\label{ineqint2}
\int_{B_{\rho}(x_j)}(P_g\xi_j-\frac{n-4}{2}Q_g\xi_j) d V_g&=\int_{B_{\rho}(x_j)}(\frac{n-4}{2}\bar{Q}u_j(p_{\rho})^{-1}u_j(p)^{\frac{n+4}{n-4}} - \frac{n-4}{2}Q_g\xi_j)\,d V_g\\
&\leq \frac{n-4}{2}\bar{Q}\int_{B_{\rho}(x_j)}u_j(p_{\rho})^{-1}u_j(p)^{\frac{n+4}{n-4}} \,d V_g.
\end{align}
Using $(\ref{ineqmeasurelimit})$ and $\epsilon_j\leq e^{-R_j}$, we have
\begin{align*}
\int_{B_{R_jM_j^{-\frac{2}{n-4}}}(x_j)}u_j^{\frac{n+4}{n-4}}dV_g\leq C M_j^{-1},
\end{align*}
while by $(\ref{ineqlembound1})$ we have
\begin{align*}
\int_{B_{\rho}(x_j)-B_{R_jM_j^{-\frac{2}{n-4}}}(x_j)}u_j^{\frac{n+4}{n-4}} d V_g &\leq C \int_{B_{\rho}(x_j)-B_{R_jM_j^{-\frac{2}{n-4}}}(x_j)}(M_j^{-\lambda} d_g(p, x_j)^{4-n+\sigma})^{\frac{n+4}{n-4}}\\
&\leq C (R_jM_j^{-\frac{2}{n-4}})^{-4+\frac{n+4}{n-4}\sigma}M_j^{-\lambda\frac{n+4}{n-4}}\\
&=R_j^{-4+\frac{n+4}{n-4}\sigma}M_j^{-1}=o(1)M_j^{-1}.
\end{align*}
Therefore,
\begin{align}\label{ineqint3}
\int_{B_{\rho}(x_j)}u_j^{\frac{n+4}{n-4}} d V_g\leq C M_j^{-1}.
\end{align}
Lemma \ref{lemupperboundsphere} follows from the inequalities $(\ref{ineqint1})$-$(\ref{ineqint3})$.
\end{proof}

\noindent{\it Proof of Proposition \ref{propupperbound}.}\quad Suppose $(\ref{inequpperboundlevel})$ fails. Let $M_j=u_j(x_j)$. Then there exists a subsequence $u_j$ and $\{p_j\}$ with $d_g(p_j, x_j)\leq \frac{\delta_2}{2}$ with $\delta_2$ in Lemma \ref{lemupperboundestimates1} such that
\begin{align}\label{ineqdivergence}
u_j(p_j)M_jd_g(p_j, x_j)^{n-4}\to \infty.
\end{align}
 If $Q_g\leq 0$ does not hold in $B_{\delta_2}(\bar{x})$, let $g_0=\phi^{-\frac{4}{n-4}}g$ be the metric so that $Q_{g_0}\geq 0$ and $R_{g_0}>0$ on $M$. Then $(\ref{ineqdivergence})$ holds for $g$ and $u_j$ replaced by $g_0$ and $\tilde{u}_j=\phi u_j$. Therefore, from now on we assume that $Q_g\geq 0$ and $R_g>0$ on $M$. By Lemma \ref{lemlimitmodel} and $0<\epsilon_j\leq e^{-R_j}$,
 \begin{align*}
 R_jM_j^{-\frac{2}{n-4}}\leq d_g(p_j, x_j) \leq \frac{\delta_2}{2}.
 \end{align*}
 Let $x=(x^1,...,x^n)$ be the geodesic normal coordinates centered at $x_j$. Denote $y=d_j^{-1}x$ where $d_j=d_g(p_j, x_j)$.  We do rescaling
 \begin{align*}
 v_j(y)=d_j^{\frac{n-4}{2}}u_j(\exp_{x_j}(d_jy)),\,\,|y|\leq 2.
 \end{align*}
 Then $v_j$ satisfies
 \begin{align*}
 P_{h_j}v_j(y)=\frac{n-4}{2}\bar{Q}v_j(y)^{\frac{n+4}{n-4}},\,\,|y|\leq 2,
 \end{align*}
 where $h_j=d_j^{-2}g_j$ so that $(h_j)_{pq}(y)=(g)_{pq}(d_jy)$. The metrics $h_j$ depend on $j$. But since $d_j$ has uniform upper bound, the sequence of metrics stays in compact sets with strong norms and all the results in Lemma \ref{lemupperboundsphere} hold uniformly for $j$. Also, the conclusion of Lemma \ref{lemupperboundestimates1} is scaling invariant. Note that as the metrics $h_j$ converge to $h$, Green's functions of Paneitz operators $P_{h_j}$ converge to Green's functions of Paneitz operators $P_{h}$ uniformly away from the singularity. In particular, if $d_j\to 0$ then $h_j$ converges to a flat metric on $B_2(0)$ so that in proof of Proposition \ref{propsingularity}, $G(p, \bar{x})$ will be replaced by $c_n|y|^{4-n}$ in Euclidean balls with $c_n$ in $(\ref{expansion1})$. Therefore, Lemma \ref{lemupperboundsphere} holds for $v_j$ so that
 \begin{align*}
 \max_{|x|=1}v_j(0)v_j(x)\leq C,
 \end{align*}
which shows that
\begin{align*}
M_ju_j(p_j)d_g(p_j, x_j)^{4-n}\leq C,
\end{align*}
contradicting with $(\ref{ineqdivergence})$. We have proved $(\ref{inequpperboundlevel})$ in $B_{\frac{\delta_2}{2}}(\bar{x})$. By Lemma \ref{lemH} the inequality $(\ref{inequpperboundlevel})$ holds in $B_{\delta_1}(\bar{x})$.\vskip0.2cm

The same properties for $\xi_j$ in Lemma \ref{lemupperboundsphere} now hold for $M_j u_j$ in $B_{\frac{\delta_2}{2}}(\bar{x})$. Up to a subsequence
\begin{align*}
M_ju_j\to v\,\,\text{in}\,\,C_{loc}^4(B_{\frac{\delta_2}{2}}(\bar{x}))
\end{align*}
where
\begin{align*}
P_gv=0\,\,\text{in}\,\,B_{\frac{\delta_2}{2}}(\bar{x}).
\end{align*}
By Remark \ref{remark01}, $v>0$ in $B_{\frac{\delta_2}{2}}(\bar{x})$. Since $\bar{x}$ is an isolated simple blowup point, the same argument in Lemma \ref{lemupperboundsphere} shows that $r^{\frac{n-4}{2}}\bar{v}(r)$ is non-increasing for $0<r<\frac{\delta_2}{2}$, where $\bar{v}(r)=|\partial B_r(\bar{x})|^{-1}\int_{\partial B_r(\bar{x})}v ds_g$. Combining with the Harnack inequality, it implies that $v$ is not regular at $\bar{x}$. Also, $v$ satisfies the condition in Proposition \ref{propsingularity}. By Proposition \ref{propsingularity}, we obtain $(\ref{inequpperboundbehavior})$. This completes the proof of Proposition \ref{propupperbound}.

\qed
\vskip0.2cm
As an easy consequence of Proposition \ref{propupperbound} and by standard interior estimates of the elliptic equation $(\ref{equation1})$, we have
\begin{cor}\label{corerrorterms}
Under the condition in Lemma \ref{lemupperboundestimates1}, there exists $\delta_2>0$ independent of $j$ such that for $R_jM_j^{-\frac{2}{n-4}}\leq d_g(p, x_j)\leq \delta_2$
\begin{align}\label{ineqboundout1}
|\nabla_g^k u_j(p)|\leq\,\,C\,M_j^{-1}d_g(p, x_j)^{4-n-k}\,\,\,\text{for}\,\,0\leq k\leq 4,
\end{align}
where $M_j=u_j(x_j)$, and $C$ is a constant independent of $j$. Let $x$ be geodesic normal coordinates of $(\Omega ,g)$ centered at $x_j$. Then for any fixed $r\leq \delta_2$, there exists $C>0$ depending on $|g|_{C^3(\Omega)}$ such that

\begin{align}\label{ineqbounderrorterm2}
|\int_{d_g(p, x_j)\leq r}(x\cdot \nabla u +\frac{n-4}{2}u)(\Delta^2 - P_g)u dx|\leq CM_j^{-\frac{4}{n-4}+o(1)}
\end{align}
with the term $o(1)\to 0$ as $j\to \infty$.

\end{cor}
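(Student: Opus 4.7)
The plan is to prove the derivative bound (\ref{ineqboundout1}) by a rescaling argument plus standard interior $C^{4,\alpha}$ estimates, and to prove the integral bound (\ref{ineqbounderrorterm2}) by splitting at the blowup scale $r_0 := R_j M_j^{-2/(n-4)}$ and running a change of variables in each of the two regions separately.

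For the derivative estimates, fix a point $p$ with $r_0 \leq r := d_g(p,x_j) \leq \delta_2$ and rescale by $\tilde{u}_j(y) = r^{(n-4)/2} u_j(\exp_{x_j}(ry))$, so that $\tilde{u}_j$ satisfies $P_{\tilde g_j} \tilde u_j = \tfrac{n-4}{2}\bar Q \tilde u_j^{(n+4)/(n-4)}$ with respect to the pullback metric $\tilde g_j$, and $\tilde g_j \to g_{\mathrm{Eucl}}$ in $C^k_{loc}$. Proposition \ref{propupperbound} combined with the Harnack inequality of Lemma \ref{lemH} gives $\tilde u_j \leq C M_j^{-1} r^{(4-n)/2} \leq C R_j^{(4-n)/2} \to 0$ on $\{1/2 \leq |y| \leq 2\}$. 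Standard interior estimates for the fourth-order semilinear equation on a metric of bounded geometry then yield $|D^k \tilde u_j| \leq C\|\tilde u_j\|_{L^\infty}$ for $0 \leq k \leq 4$ at the rescaled point; scaling back yields (\ref{ineqboundout1}).

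For the integral estimate, in the bubble region $|x| \leq r_0$ I change variables $y = M_j^{2/(n-4)} x$ with $v_j(y) = M_j^{-1} u_j$. Direct bookkeeping shows $\Phi(u_j)(x) = M_j\,\Phi(v_j)(y)$, $(\Delta^2 - P_g) u_j(x) = M_j^{(n+4)/(n-4)} (\Delta^2 - P_{h_j}) v_j(y)$, and $dx = M_j^{-2n/(n-4)}\,dy$, so the total $M_j$-prefactor from the change of variables is $M_j^0$. Since the pullback metric satisfies $(h_j)_{ij}(y) - \delta_{ij} = O(M_j^{-4/(n-4)} |y|^2)$, the analogue of (\ref{ineqbounderrorterms}) supplies an overall factor $M_j^{-4/(n-4)}$ in front of the bubble-region integrand. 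The remaining $y$-integral over $|y| \leq R_j$ is controlled using Lemma \ref{lemlimitmodel} (which supplies $C^4$-convergence of $v_j$ to $v_0 = (1 + |y|^2/4)^{-(n-4)/2}$, together with the pointwise decay $|D^k v_0|(y) \lesssim (1+|y|)^{-(n-4+k)}$); it is at most a fixed polynomial in $R_j$ (logarithmic for $n=6$, bounded for $n \geq 7$), which by the growth condition $R_j = o(\log M_j)$ in (\ref{ineqrulerlimit}) is $M_j^{o(1)}$. The bubble region therefore contributes $C M_j^{-4/(n-4)+o(1)}$.

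In the exterior region $r_0 \leq |x| \leq r$, I insert the derivative bounds from the first step into (\ref{ineqbounderrorterms}) to get $|\Phi(u_j)| \leq C M_j^{-1} |x|^{4-n}$ and $|(\Delta^2 - P_g) u_j| \leq C M_j^{-1} |x|^{2-n}$, so the pointwise product is $O(M_j^{-2} |x|^{6-2n})$ and in polar coordinates the integral collapses to $M_j^{-2} \int_{r_0}^{r} s^{5-n}\,ds$. For $n = 6, 7$ this produces $C M_j^{-4/(n-4)+o(1)}$ after substituting $r_0 = R_j M_j^{-2/(n-4)}$ and using $R_j = o(\log M_j)$. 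The main obstacle is dimension $n = 5$: the pointwise product bound alone yields only $C M_j^{-2}$, well short of the claimed $M_j^{-4+o(1)}$. Sharpening it requires integrating by parts in the dominant contribution $\int |x|^2 (g^{ij}g^{pq} - \delta^{ij}\delta^{pq}) \partial_i \partial_j \partial_p \partial_q u \cdot \Phi(u)\,dx$ and exploiting that, in conformal normal coordinates centered at $x_j$, the leading quadratic coefficient $R_{iajb}(0) x^a x^b$ contracts against the spherically symmetric factor $\Phi(u)$ to produce traces of the Ricci tensor $R_{ij}(0)$, which vanish in conformal normal coordinates. This tensor cancellation at the blowup point is the heart of the estimate for $n = 5$.
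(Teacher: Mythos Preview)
Your argument for the derivative bound (\ref{ineqboundout1}) and for the integral bound in both the bubble region and the exterior region (for $n\ge 6$) matches the paper exactly: split at $r_0=R_jM_j^{-2/(n-4)}$, rescale in the inner part, and feed (\ref{ineqboundout1}) into (\ref{ineqbounderrorterms}) in the outer part.

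Where you diverge from the paper is the final paragraph on $n=5$. You correctly observe that the crude outer bound gives only $CM_j^{-2}$, not $M_j^{-4+o(1)}$. But your proposed remedy --- passing to conformal normal coordinates and exploiting $R_{ij}(0)=0$ to kill the leading quadratic error --- is \emph{not} what the paper does, and it would not close the gap anyway. First, the corollary is explicitly stated in \emph{geodesic} normal coordinates; conformal normal coordinates enter the paper only later, in the proof of Theorem~\ref{thm27} and in Remark~\ref{remark6} (for $n=6,7$). Second, even granting the cancellation you describe, it improves the coefficient of the fourth-order error from $O(|x|^2)$ to $O(|x|^3)$, which turns the outer contribution from $CM_j^{-2}r$ into $CM_j^{-2}r^2$ --- still far from $M_j^{-4}$.

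What the paper actually does for $n=5$ is simply accept the cruder outcome: its own computation of the outer integral yields $C\int M_j^{-2}|x|^{6-2n}\,dx$, which for $n=5$ is $CM_j^{-2}(r-r_0)$, and the text then remarks ``For $n=5$, it is good for the discussion to come.'' The point is that in the only application (Proposition~\ref{propinequality}) the integral is multiplied by $M_j^2$ and one subsequently takes $\lim_{\gamma\to 0}$; since the outer piece contributes $M_j^2\cdot CM_j^{-2}\gamma=C\gamma\to 0$, the weaker exterior estimate suffices. So the statement of the corollary is somewhat overstated for $n=5$, but no tensor cancellation is needed --- drop your last paragraph and simply record that for $n=5$ the exterior piece is $O(M_j^{-2}r)$, which is enough for Proposition~\ref{propinequality}.
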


\begin{proof}
Inequality $(\ref{ineqboundout1})$ is a direct consequence of Proposition  \ref{propupperbound} and standard interior estimates of the elliptic equation $(\ref{equation1})$. We will next establish $(\ref{ineqbounderrorterm2})$. Note that $0<\epsilon_j \leq e^{-R_j}$. Using the estimates $(\ref{ineqboundout1})$, $(\ref{ineqmeasurelimit})$ and $(\ref{ineqrulerlimit})$, and recall the error bound $(\ref{ineqbounderrorterms})$, we have
\begin{align*}
&\int_{|x|\leq R_jM_j^{-\frac{2}{n-4}}}\,|(x\cdot \nabla u +\frac{n-4}{2}u)(\Delta^2 - P_g)u|\,dx\\
&\leq \int_{|x|\leq R_jM_j^{-\frac{2}{n-4}}}\,C(|x||Du(x)|+ u(x))(|x|^2|D^4u(x)|+|x| \,|D^3 u(x)|+|D^2u(x)|+|D u(x)|+u(x)) dx\\
&\leq C\int_{|y|\leq R_j} M_j(1+4^{-1}|y|^2)^{-\frac{n-4}{2}}M_j(1+4^{-1}|y|^2)^{-\frac{n-4}{2}-1}M_j^{\frac{4}{n-4}} M_j^{-\frac{2n}{n-4}} dy\\
&=CM_j^{-\frac{4}{n-4}}\int_{|y|\leq R_j}(1+4^{-1}|y|^2)^{3-n}dy=CM_j^{-\frac{4}{n-4}+o(1)},\,\,\,\,\text{and}\\
&\int_{R_jM_j^{-\frac{2}{n-4}} \leq |x|\leq r}\,|(x\cdot \nabla u +\frac{n-4}{2}u)(\Delta^2 - P_g)u|\,dx\\
&\leq \int_{R_jM_j^{-\frac{2}{n-4}} \leq |x|\leq r}C(|x||Du(x)|+ u(x))(|x|^2|D^4u(x)|+|x| \,|D^3 u(x)|+|D^2u(x)|+|D u(x)|+u(x)) dx\\
&\leq C\, \int_{R_jM_j^{-\frac{2}{n-4}} \leq |x|\leq r}\,M_j^{-2}|x|^{6-2n}\,dx\\
&\leq CM_j^{-\frac{4}{n-4}+o(1)},
\end{align*}
where the term $o(1)\to 0$ as $j\to \infty$ and $C>0$ is a constant depending on $|g|_{C^3(\Omega)}$. Therefore,
\begin{align*}
\int_{|x|\leq r}\,|(x\cdot \nabla u +\frac{n-4}{2}u)(\Delta^2 - P_g)u|\,dx\leq CM_j^{-\frac{4}{n-4}+o(1)}\,\,\text{for}\,\,R_jM_j^{-\frac{2}{n-4}}\leq r,
\end{align*}
where $C>0$ is a constant independent of $j$ and the term $o(1)\to 0$ as $j\to \infty$.

For $n=5$, it is good for the discussion to come. For $n\geq 6$, better estimate is needed in order to cancel the error terms in the Pohozaev identity.
By $(\ref{ineqmeasurelimit})$,
\begin{align*}
u_j(\exp_{x_j}(x))\leq 2M_j(1+4^{-1}M_j^{\frac{4}{n-4}}|x|^2)^{-\frac{n-4}{2}},\,\,\text{for}\,\,|x|\,\leq\,R_jM_j^{-\frac{2}{n-4}}.
\end{align*}
Combining with Proposition \ref{propupperbound}, we have
\begin{align*}
u_j(\exp_{x_j}(x))&\leq C \min\{ M_j(1+4^{-1}M_j^{\frac{4}{n-4}}|x|^2)^{-\frac{n-4}{2}},\,CM_j^{-1}|x|^{4-n}\}\\
&\leq C\,M_j(1+4^{-1}M_j^{\frac{4}{n-4}}|x|^2)^{-\frac{n-4}{2}},\,\,\,\text{for}\,\,|x|\,\leq\,\delta_2.
\end{align*}
For $n=6$,
\begin{align*}
&\int_{|x|\leq r}\,|(x\cdot \nabla u +\frac{n-4}{2}u)(\Delta^2 - P_g)u|\,dx\\
&\leq C\int_1^{M_j^{\frac{2}{n-4}}r}M_j^{-2}M_j^{\frac{2(n-6)}{n-4}}|y|^{5-n}d |y|\\
&\leq CM_j^{-\frac{4}{n-4}}\ln(M_j^{\frac{2}{n-4}}r),\,\,\text{for}\,\,R_jM_j^{-\frac{2}{n-4}}\leq r,
\end{align*}
For $n\geq 7$,
\begin{align*}
&\int_{|x|\leq r}\,|(x\cdot \nabla u +\frac{n-4}{2}u)(\Delta^2 - P_g)u|\,dx\\
&\leq C\int_1^{M_j^{\frac{2}{n-4}}r}M_j^{-2}M_j^{\frac{2(n-6)}{n-4}}|y|^{5-n}d |y|\\
&\leq CM_j^{-\frac{4}{n-4}},\,\,\text{for}\,\,R_jM_j^{-\frac{2}{n-4}}\leq r,
\end{align*}

For the term $M_j^2\int_{|x|\leq r}|Q_g|\,(u_j^2+|x|\,|Du_j|\,u_j) d x$ with $r>0$ fixed,
\begin{align*}
M_j^2\int_{|x|\leq r}|Q_g|\,(u_j^2+|x|\,|Du_j|\,u_j)\, d x&\leq C\,M_j^2\int_0^{rM_j^{\frac{2}{n-4}}}M_j^2(1+|y|)^{8-2n}M_j^{-\frac{2n}{n-4}}|y|^{n-1}d|y|\\
&\leq C\,M_j^{2-\frac{8}{n-4}}\int_0^{rM_j^{\frac{2}{n-4}}}\,(1+|y|)^{7-n}d|y|
\end{align*}
For $n=6$,
\begin{align*}
M_j^2\int_{|x|\leq r}|Q_g|\,(u_j^2+|x|\,|Du_j|\,u_j)\, d x\leq \,C\,r^2.
\end{align*}
For $n=7$,
\begin{align*}
M_j^2\int_{|x|\leq r}|Q_g|\,(u_j^2+|x|\,|Du_j|\,u_j)\, d x\leq \,C\,r.
\end{align*}
These are good terms. For later use, estimates on the term $M_j^2\int_{|x|\leq r}\,(u_j+|x|\,|Du_j|)\,|D^2u_j|\, d x$ will be needed for  $n=6$; while for $n=7$, estimates on the term $M_j^2\int_{|x|\leq r}\,(u_j+|x|\,|Du_j|)\,(|Du_j|+|D^2u|)\, d x$ is needed.

\end{proof}

\begin{prop}\label{propinequality}
Let $(M^n, g)$ be a closed Riemannian manifold of dimension $n=5$ with $R_g\geq 0$, and also $Q_g\geq 0$ with $Q_g(p_0)>0$ for some point $p_0\in M$. Let $\{u_j\}$ be a sequence of positive solutions to $(\ref{equation1})$ and $x_j\to \bar{x}$ be an isolated simple blowup point so that
\begin{align*}
u_j(x_j)u_j(p)\to h(p)\,\,\,\,\text{in}\,\,C_{loc}^{4,\alpha}(B_{\delta_2}(\bar{x})-\{\bar{x}\}),
\end{align*}
for some $0< \alpha <1$. Assume that for some constants $a>0$ and $A$,
\begin{align*}
h(p)=\frac{a}{d_g(p, \bar{x})^{n-4}}+A+o(1)\,\,\text{as}\,\,d_g(p, \bar{x})\to 0.
\end{align*}
Then $A= 0$.
\end{prop}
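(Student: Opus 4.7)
The plan is to apply the Pohozaev identity of Section 5 to $u_j$ on a geodesic ball $B_\sigma(x_j)$ in normal coordinates, multiply through by $M_j^2$ with $M_j=u_j(x_j)$, pass to the limit $j\to\infty$ with $\sigma$ held fixed, and finally send $\sigma\to 0$. The two sides of the Pohozaev identity decouple cleanly in dimension $n=5$: the volume error and boundary volume terms on the right-hand side vanish, while the left-hand side converges to the Pohozaev boundary functional $\mathcal{P}(h)$ on $\partial B_\sigma(\bar{x})$. This yields the identity $\mathcal{P}(h)|_{\partial B_\sigma(\bar{x})} = 0$ for every small $\sigma>0$. A direct Euclidean calculation for the two-term ansatz $h_*=a|x|^{4-n}+A$ shows that $\mathcal{P}(h_*)|_{\partial B_\sigma}$ equals the $\sigma$-independent constant $(n-4)^2(n-2)\,aA\,\omega_{n-1}$, and the vanishing forces $A=0$.

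In detail, in normal coordinates centered at $x_j$ and for fixed small $\sigma$, the identity of Section 5 reads
\begin{align*}
\mathcal{P}(u_j)\big|_{\partial B_\sigma} \,=\, \int_{B_\sigma}\bigl(x\cdot\nabla u_j + \tfrac{n-4}{2}u_j\bigr)(\Delta^2 - P_g)u_j\,dx \,+\, \frac{(n-4)^2}{4n}\bar{Q}\int_{\partial B_\sigma}(x\cdot\nu)\,u_j^{\frac{2n}{n-4}}\,ds,
\end{align*}
and $\mathcal{P}$ is quadratic in $u_j$ and its derivatives. I would then multiply through by $M_j^2$ and estimate each piece. The left-hand side converges to $\mathcal{P}(h)|_{\partial B_\sigma(\bar{x})}$ by the $C^4_{\mathrm{loc}}$ convergence $M_j u_j\to h$ on $B_{\delta_2}(\bar{x})\setminus\{\bar{x}\}$ furnished by Proposition \ref{propupperbound}. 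The first right-hand term is bounded by $CM_j^{2-4/(n-4)+o(1)}=CM_j^{-2+o(1)}$ via Corollary \ref{corerrorterms}; and the second is bounded, using $u_j\le CM_j^{-1}d_g(p,x_j)^{4-n}$, by $CM_j^{-8/(n-4)}\sigma^{-n}=CM_j^{-8}\sigma^{-5}$. Both right-hand terms vanish as $j\to\infty$, and hence $\mathcal{P}(h)|_{\partial B_\sigma(\bar{x})}=0$ for every small $\sigma>0$.

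For the final limit $\sigma\to 0$, I would use the representation $h = aG(\bar{x},\cdot)+b$ from Proposition \ref{propupperbound}, together with the expansion $G(\bar{x},p)=c_n|x|^{4-n}(1+f)$ and the bounds (\ref{ineqGrp}) on $f$. A direct calculation in the Euclidean metric on $\mathbb{R}^n$ for the model $h_* = a|x|^{4-n}+A$ shows that all four Pohozaev boundary integrands scale as $r^{5-2n}$ and their coefficients sum to zero in the $a^2$ sector, leaving only the cross contribution $(n-4)^2(n-2)\,aA\,r^{1-n}$; this integrates over $\partial B_\sigma$ to the $\sigma$-independent constant $(n-4)^2(n-2)\,aA\,\omega_{n-1}$. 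To pass from $h_*$ to the true $h$, I would apply the Pohozaev identity to $h$ itself on the annulus $B_\sigma\setminus B_\epsilon$: since $P_gh=0$ away from $\bar{x}$, only the annular volume integral $\int(x\cdot\nabla h+\tfrac{n-4}{2}h)(\Delta^2-P_g)h\,dx$ survives, and in dimension $5$ its integrand is pointwise $O(|x|^{-4})$, hence of size $O(\sigma)$ as $\sigma\to 0$. The subleading pieces of $h$ (the $f$-correction of $G$ and the smooth part $b$) contribute only lower-order boundary terms on $\partial B_\epsilon$, so passing $\epsilon\to 0$ gives $\mathcal{P}(h)|_{\partial B_\sigma(\bar{x})}=(n-4)^2(n-2)\,aA\,\omega_{n-1}+O(\sigma^\kappa)$ for some $\kappa>0$. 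Combined with $\mathcal{P}(h)|_{\partial B_\sigma}=0$, letting $\sigma\to 0$ forces $A=0$.

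The main obstacle is the bookkeeping in the third step, namely showing that the annular volume integral and the sub-leading boundary contributions are genuinely small. Dimension $n=5$ is essential here: the borderline integral $\int_{B_\sigma}|x|^{-4}\,dx=O(\sigma)$ is finite precisely when $\dim=5$, and the error $(\Delta^2-P_g)h$ coupled against $x\cdot\nabla h+\tfrac{n-4}{2}h$ gives just enough decay to make all corrections vanish in the limit. For $n=6,7$ the analogous estimates fail (as already reflected in Corollary \ref{corerrorterms} and in the Remark following Theorem \ref{thm27}), and an additional ingredient—such as a classification of solutions to the linearized equation on $\mathbb{R}^n$—would be required.
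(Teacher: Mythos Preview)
Your proposal is correct and follows essentially the same approach as the paper: apply the Pohozaev identity on $B_\sigma(x_j)$, multiply by $M_j^2$, use Corollary~\ref{corerrorterms} to kill the volume error term and the upper bound from Proposition~\ref{propupperbound} to kill the $u_j^{2n/(n-4)}$ boundary term, then evaluate the limiting boundary functional of $h$ via its expansion to obtain $(n-4)^2(n-2)\,aA\,\omega_{n-1}=0$. The paper's proof is slightly more streamlined in the final step---it directly substitutes $h=a|x|^{4-n}+A+o(1)$ into the Pohozaev boundary integrand and takes $\gamma\to 0$, rather than invoking your annulus argument---but the logic and all the key estimates are identical.
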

\begin{proof}
Let $x=(x^1,...,x^n)$ be geodesic normal coordinates at $x_j$. Denote $\Omega_{\gamma, j}=B_{\gamma}(x_j)$ for $\gamma< \frac{\delta_2}{2}$. Then $\Omega_{\gamma, j}\to \Omega_{\gamma}=B_{\gamma}(\bar{x})$.  By the Pohozaev identity,
\begin{align*}
&\int_{\partial \Omega_{\gamma, j}} \,\frac{n-4}{2}(u_j\frac{\partial}{\partial \nu}(\Delta u_j)-\Delta u_j  \frac{\partial}{\partial \nu}u_j)+ ((x\cdot\nabla u_j)\frac{\partial}{\partial \nu}(\Delta u_j)- \frac{\partial}{\partial \nu} (x\cdot\nabla u_j)\Delta u_j + \frac{1}{2} (\Delta u_j)^2x\cdot \nu) ds\\
&=\int_{\Omega_{\gamma, j}}(x\cdot \nabla u_j+ \frac{n-4}{2}u_j)(\Delta^2-P_g)u_j\,dx\,+\,\frac{(n-4)^2}{4n}\bar{Q}\int_{\partial \Omega_{\gamma, j}}(x\cdot \nu)u_j^{\frac{2n}{n-4}}\, dx.
\end{align*}
Multiplying $M_j^2=u_j(x_j)^2$ on both sides of the identity and taking limit $\lim_{\gamma\to 0^+}\,\limsup_{j\to \infty}$ on both sides, we have that by Corollary \ref{corerrorterms},
\begin{align*}
\lim_{\gamma\to 0}\limsup_{j\to \infty} M_j^2 \int_{\Omega_{\gamma, j}}(x\cdot \nabla u_j+ \frac{n-4}{2}u_j)(\Delta^2-P_g)u_j\,dx=0,
\end{align*}
and
\begin{align*}
&\lim_{\gamma\to 0}[\int_{\partial \Omega_{\gamma}} \,\frac{n-4}{2}(h\frac{\partial}{\partial \nu}(\Delta h)-\Delta h \frac{\partial}{\partial \nu}h)+ ((x\cdot\nabla h)\frac{\partial}{\partial \nu}(\Delta h)- \frac{\partial}{\partial \nu} (x\cdot\nabla h)\Delta h + \frac{1}{2} (\Delta h)^2x\cdot \nu) ds]\\
&=\lim_{\gamma\to 0}\limsup_{j\to \infty}\,\,M_j^2\int_{\partial \Omega_{\gamma, j}} \,[\frac{n-4}{2}(u_j\frac{\partial}{\partial \nu}(\Delta u_j)-\Delta u_j  \frac{\partial}{\partial \nu}u_j)+ ((x\cdot\nabla u_j)\frac{\partial}{\partial \nu}(\Delta u_j)\\
&- \frac{\partial}{\partial \nu} (x\cdot\nabla u_j)\Delta u_j + \frac{1}{2} (\Delta u_j)^2x\cdot \nu)] ds\\
&=\lim_{\gamma\to 0}\limsup_{j\to \infty}\frac{(n-4)^2}{4n}\bar{Q} M_j^{-\frac{8}{n-4}} \int_{\partial \Omega_{\gamma, j}}(x\cdot \nu)(M_j\,u_j)^{\frac{2n}{n-4}}\, dx=\,0.
\end{align*}
By assumption,
\begin{align*}
&\lim_{\gamma\to 0}[\int_{\partial \Omega_{\gamma}} \,\frac{n-4}{2}(h\frac{\partial}{\partial \nu}(\Delta h)-\Delta h \frac{\partial}{\partial \nu}h)+ ((x\cdot\nabla h)\frac{\partial}{\partial \nu}(\Delta h)- \frac{\partial}{\partial \nu} (x\cdot\nabla h)\Delta h + \frac{1}{2} (\Delta h)^2x\cdot \nu) ds]\\
&=\lim_{\gamma\to 0}\,\int_{\partial \Omega_{\gamma}}(n-4)^2(n-2)a A |x|^{1-n} d s\\
&=(n-4)^2(n-2)a A |\mathbb{S}^{n-1}|,
\end{align*}
where $|\mathbb{S}^{n-1}|$ is area of $(n-1)-$dimensional round sphere.
Therefore,
\begin{align*}
A= 0.
\end{align*}
\end{proof}
\begin{Remark}\label{remark6}
Corollary \ref{corerrorterms} is not enough to prove Proposition \ref{propinequality} for manifolds of dimension $n=6$ and $n=7$. Let $U_0(r)=(1+4^{-1}r^2)^{-\frac{n-4}{2}}$ be a solution to
\begin{align}\label{equation123}
\Delta^2U_0=\frac{n-4}{2}\bar{Q}U_0^{\frac{n+4}{n-4}}
\end{align}
on $\mathbb{R}^n$ with dimension $n=6$ or $7$. The linearized equation of $(\ref{equation123})$ is
\begin{align}\label{equation1231}
\Delta^2\phi(y)=\frac{n+4}{2}\bar{Q}U_0(y)^{\frac{8}{n-4}}\phi(y)
\end{align}
for $y\in \mathbb{R}^n$.
As in \cite{Marques}, if we can show that for any solution $\phi$ to $(\ref{equation1231})$ with $\phi(y)\to 0$ as $y\to \infty$ it holds that
\begin{align*}
\phi(z)=c_0(z\cdot \nabla U_0(z) + \frac{n-4}{2}U_0(z))+ \sum_{j=1}^nc_j\partial_{z_j}U_0(z),\,\,z\in \mathbb{R}^n,
\end{align*}
with $c_0,\,...,\, c_n$ some constant, then we can prove that $|v_j(y)- U_0(y)|\leq C M_j^{-2}$ for $|y|\leq rM_j^{\frac{2}{n-4}}$ where $v_j(y)=M_j^{-1}u_j(M_j^{-\frac{2}{n-4}}y)$ and $C>0$ is a constant independent of $j$. This combining with Green's representation leads to the better estimate
\begin{align}
|\int_{d_g(p, x_j)\leq r}(x\cdot \nabla u +\frac{n-4}{2}u)(\Delta^2 - P_g)u dx|= o(1)M_j^{-2},
\end{align}
in conformal normal coordinates and the corresponding conformal metric $g$. Then Proposition \ref{propinequality} still holds for $n=6$ and $n=7$ in conformal normal coordinates and the corresponding conformal metric $g$.
\end{Remark}

\section{From isolated blowup points to isolated simple blowup points}
In this section we show that an isolated blowup point is an isolated simple blowup point.
\begin{prop}\label{propisolatedsingularpoints}
Let $(M^n, g)$ be a closed Riemannian manifold of dimension $n=5$ 
 with $R_g\geq 0$, and also $Q_g\geq 0$ with $Q_g(p_0)>0$ for some point $p_0\in M$. Let $\{u_j\}$ be a sequence of positive solutions to $(\ref{equation1})$ and $x_j\to \bar{x}$ be an isolated blowup point. Let $M_j=u_j(x_j)$. Then $\bar{x}$ is an isolated simple blow up point.
\end{prop}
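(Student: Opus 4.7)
The plan is a contradiction at the scale of the second critical point of $\hat u_j$. Suppose $\bar x$ is an isolated but not isolated simple blowup point. Then, along a subsequence, for any $\delta_1\to 0$ I may pick $r_j\to 0$ to be the second critical point of $\hat u_j$ in $(0,\delta_1)$; by Lemma \ref{lemlimitmodel} the first critical point of $\hat u_j$ sits at scale $M_j^{-2/(n-4)}$, so $\mu_j:=M_j r_j^{(n-4)/2}\to\infty$. The key observation is that on the interval $(R_j M_j^{-2/(n-4)},r_j)$ the function $\hat u_j$ is strictly decreasing, and this monotonicity is the only place the ``isolated simple'' hypothesis enters the proofs of Lemma \ref{lemupperboundestimates1} and Proposition \ref{propupperbound}. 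Rerunning those arguments with $\delta_1$ replaced by $r_j$ yields the upper bound
\[
u_j(p)\leq C M_j^{-1} d_g(p,x_j)^{4-n}\qquad\text{for } d_g(p,x_j)\leq r_j,
\]
together with the matching derivative estimates $|\nabla_g^k u_j(p)|\leq C M_j^{-1} d_g(p,x_j)^{4-n-k}$.

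Next, rescale at the second critical point. Set $w_j(y)=r_j^{(n-4)/2}u_j(\exp_{x_j}(r_j y))$, $\lambda_j=\hat u_j(r_j)$, and $W_j:=\lambda_j^{-1}w_j$. By construction $\hat W_j(1)=1$ and $\hat W_j'(1)=0$. The upper bound above, the lower bound of Remark \ref{remark01}, and the Harnack inequality of Lemma \ref{lemH} give $c|y|^{4-n}\leq W_j(y)\leq C|y|^{4-n}$ on any fixed annulus $\{R^{-1}\leq|y|\leq R\}$. The rescaled equation reads
\[
P_{h_j}W_j=\tfrac{n-4}{2}\bar{Q}\,\lambda_j^{8/(n-4)}\,W_j^{(n+4)/(n-4)},
\]
with the rescaled metric $h_j\to g_{\mathrm{Euc}}$ in $C^k_{\mathrm{loc}}$ and $\lambda_j\to 0$; standard elliptic regularity then yields, along a further subsequence, $W_j\to W$ in $C^4_{\mathrm{loc}}(\mathbb{R}^n\setminus\{0\})$ with $\Delta^2 W=0$.

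To identify $W$, I would split the Green's representation
\[
u_j(p)=\tfrac{n-4}{2}\bar{Q}\int_M G(p,q)\,u_j(q)^{(n+4)/(n-4)}\,dV_g(q)
\]
into the bubble region $|q-x_j|\leq R_j M_j^{-2/(n-4)}$ and its complement. By Lemma \ref{lemlimitmodel} the bubble contributes $a\,G(\bar x,p)(1+o(1))$ to $M_j u_j(p)$ with $a=\tfrac{n-4}{2}\bar{Q}\int_{\mathbb{R}^n}(1+|z|^2/4)^{-(n+4)/2}\,dz>0$, while the complement contributes $o(1)$ thanks to the Step 1 bound. Inserting the Green's function expansion $(\ref{expansion1})$ at $p=\exp_{x_j}(r_j y)$ gives $M_j u_j(\exp_{x_j}(r_j y))=a c_n r_j^{4-n}|y|^{4-n}(1+o(1))$, so $\lambda_j=a c_n M_j^{-1} r_j^{-(n-4)/2}(1+o(1))$ and $W(y)=|y|^{4-n}$. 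But then $\hat W(r)=r^{(n-4)/2}W(r)=r^{-(n-4)/2}$, which gives $\hat W'(1)=-\tfrac{n-4}{2}\neq 0$, contradicting $\hat W'(1)=\lim_j\hat W_j'(1)=0$.

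The main obstacle is the extension of Lemma \ref{lemupperboundestimates1} and Proposition \ref{propupperbound} to the non-simple setting in Step 1: I need to verify that the barrier argument of Lemma \ref{lemupperboundestimates1}, together with the conformal scalar curvature sign condition required to apply the maximum principle of Lemma \ref{lemmaximumprinciple}, still goes through on the annulus $B_{r_j}\setminus B_{R_j M_j^{-2/(n-4)}}(x_j)$. All the required ingredients--monotonicity of $\hat u_j$ on this annulus, positivity of scalar curvature, and the bubble bound of Lemma \ref{lemlimitmodel} at the inner sphere--are available, but checking the barrier's scalar curvature condition on the outer sphere $\partial B_{r_j}(x_j)$, where one no longer has bubble-scale control, is the delicate point on which the whole contradiction hinges.
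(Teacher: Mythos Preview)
Your rescaling at the second critical point is the right move and matches the paper, but the proposal breaks at the identification step, not (only) at the barrier step you flag.

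The paper avoids your ``delicate point'' entirely: rather than trying to push Lemma~\ref{lemupperboundestimates1} and Proposition~\ref{propupperbound} through on the shrinking annulus $B_{r_j}\setminus B_{R_jM_j^{-2/(n-4)}}(x_j)$, it observes that after the rescaling $\xi_j(y)=r_j^{(n-4)/2}u_j(\exp_{x_j}(r_jy))$ the origin is a \emph{genuine} isolated simple blowup point for $\{\xi_j\}$ with fixed radius $1$ (since $\hat\xi_j$ has exactly one critical point in $(0,1)$). Proposition~\ref{propupperbound} then applies verbatim to $\xi_j$, with no need to revisit the outer-sphere scalar curvature check.

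The fatal gap is your claim that the complement of the bubble region contributes $o(1)$ in the Green's representation, forcing $W(y)=|y|^{4-n}$. Your Step~1 bound, even if it holds, controls $u_j$ only on $\{|q-x_j|\le r_j\}$; on $\{|q-x_j|>r_j\}$ you have nothing beyond the crude isolated-blowup bound $u_j(q)\le\bar C\,d_g(q,x_j)^{-(n-4)/2}$, which is far too weak to make the outside integral negligible at the scale $r_j^{4-n}$. In fact the paper shows the limit is $h(y)=a|y|^{4-n}+b$ with $b$ a nonnegative constant coming precisely from that outside mass, and the critical-point condition $\frac{d}{dr}(r^{(n-4)/2}h)\big|_{r=1}=0$ then forces $b=a>0$ rather than $b=0$. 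So no contradiction arises at this stage; one must invoke the Pohozaev identity (Proposition~\ref{propinequality}) to rule out $b>0$. Your argument, by asserting $W=|y|^{4-n}$, is implicitly assuming away exactly the term whose vanishing requires the $n=5$ Pohozaev estimate --- which is why the dimensional restriction is invisible in your write-up.
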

\begin{proof}
We prove the Proposition by contradiction argument. Assume that $\bar{x}$ is not an isolated simple blow up point. Then there exists two critical point of $r^{\frac{n-4}{2}}\bar{u}_j(r)$ in $(0, \mu_j)$ with $\mu_j\to 0$ up to as subsequence as $j\to \infty$. By Lemma \ref{lemlimitmodel} and let $0<\epsilon_j<e^{-R_j}$, $r^{\frac{n-4}{2}}\bar{u}_j(r)$ has precisely one critical point in $(0, R_jM_j^{-\frac{2}{n-4}})$. We choose $\mu_j$ to be the second critical point of $r^{\frac{n-4}{2}}\bar{u}_j(r)$ so that $\mu_j\geq R_jM_j^{-\frac{2}{n-4}}$ and by assumption $\mu_j\to 0$.

Let $x=(x^1,...,x^n)$ be the geodesic normal coordinates centered at $x_j$, and let $y=\mu_j^{-1}x$. For simple notations, we assume $\delta_2=1$. We define the scaled metric $h_j=\mu_j^{-2}g$ so that $(h_j)_{pq}(\mu_j^{-1} x)dx^pdx^q=g_{pq}(x)dx^pdx^q$, and
\begin{align*}
\xi_j(y)=\mu_j^{\frac{n-4}{2}}u_j(\exp_{x_j}(\mu_j y)),\,\,\,\,\text{for}\,\,|y|<\mu_j^{-1}.
\end{align*}
We denote $\bar{\xi}_j$ as spherical average of $\xi_j$ in the usual way. Then we have
\begin{align}
&P_{h_j}\xi_j(y)=\frac{n-4}{2}\bar{Q}\xi_j(y)^{\frac{n+4}{n-4}},\,\,\,\,|y|<\,\mu_j^{-1},\\
&|y|^{\frac{n-4}{2}}\xi_j(y)\leq C,\,\,\,\,|y|<\,\mu_j^{-1},\\
&\lim_{j\to\infty}\xi_j(0)=\infty,\\
&-\frac{4(n-1)}{n-2}\Delta_{h_j}\xi_j^{\frac{n-2}{n-4}}+R_{h_j}\xi_j^{\frac{n-2}{n-4}}\geq 0, \,\,\,\,|y|<\,\mu_j^{-1},\\
&r^{\frac{n-4}{2}}\bar{\xi}_j(r)\,\,\text{has precisely one critical point in}\,\,0< r <1,\\
&\label{criticalpoint}\frac{d}{d r}(r^{\frac{n-4}{2}}\bar{\xi}_j(r))=0\,\,\text{at}\,\,r=1.
\end{align}
Therefore $\{0\}$ is an isolated simple blowup point of the sequence $\{\xi_j\}$. Note that the Remark \ref{remark01} holds for $u_j$ so that
\begin{align}\label{ineqlowerbound1}
\xi_j(0)\xi_j(y)\geq C|y|^{4-n}\,\,\,\,\text{for}\,\,|y|\geq \mu_j^{-1}R_jM_j^{-\frac{2}{n-4}},
\end{align}
where $\mu_j^{-1}R_jM_j^{-\frac{2}{n-4}}\leq 1$. By Lemma \ref{lemH}, there exists $C>0$ independent of $j$ and $k$ so that for any $k\in \mathbb{R}$,
\begin{align}\label{ineqpolynomialgrowth}
\max_{2^k\leq |y|\leq 2^{k+1}}\xi_j(0)\xi_j(y)\,\,\leq\, C \min_{2^k\leq |y|\leq 2^{k+1}}\xi_j(0)\xi_j(y),\,\,\text{when}\,\,2^{k+1}<\mu_j^{-1}\frac{\delta_2}{3}.
\end{align}
Note that $Q_{h_j}\geq 0$ and $R_{h_j}>0$ in $M$. Also the metrics $h_j$ are all well controlled in $|y|\leq 1$. In proof of Lemma \ref{lemupperboundestimates1} the maximum principle holds for $h_j$ and the coefficients of the test function are still uniformly chosen for $h_j$ so that the estimate in Lemma \ref{lemupperboundestimates1} holds for each $\xi_j$ in $|y|\leq \tilde{\delta}_2$ for some $\tilde{\delta}_2<1$ independent of $j$. Similarly Proposition \ref{propupperbound} holds for $\xi_j$ in $|y|\leq\tilde{\delta}_2$. This combining with $(\ref{ineqlowerbound1})$ and $(\ref{ineqpolynomialgrowth})$ implies
\begin{align*}
C(K)^{-1}\leq \xi_j(0)\xi_j(y) \leq C(K)
\end{align*}
for $K\subset\subset \mathbb{R}^n-\{0\}$ when $j$ is large, $h_j$ converges to the flat metric and there exists $a>0$ so that $\xi_j(0)\xi_j(y)$ converges to
\begin{align*}
h(y)=a|y|^{4-n}+b(y)\,\,\text{in}\,\,C_{loc}^4(\mathbb{R}^n-\{0\}),
\end{align*}
where $b(y)\in C^4(\mathbb{R}^n)$ satisfies
\begin{align*}
\Delta^2 b =0
\end{align*}
in $\mathbb{R}^n$. Here $h>0$ in $\mathbb{R}^n-\{0\}$. Also,
\begin{align}\label{ineqscalarcurvature1}
-\Delta h(y)^{\frac{n-2}{n-4}}\geq 0,\,\,|y|>0.
\end{align}
Moreover, for a fixed point $y_0$ in $|y|=1$, by $(\ref{ineqpolynomialgrowth})$,
\begin{align*}
h(y)\leq |y|^{2+\frac{\ln C}{\ln 2}} h(y_0),
\end{align*}
for $|y|\geq 1$. Since $h>0$ for $|y|>0$, it follows that $b(y)$ is a polyharmonic function of polynomial growth on $\mathbb{R}^n$. Therefore, $b(y)$ must be a polynomial in $\mathbb{R}^n$, see \cite{Armitage}. Non-negativity of $h$ near infinity implies that $b(y)$ is of even order. Then either $b(y)$ is a non-negative constant or $b(y)$ is a polynomial of even order with order at least two and $b(y)$ is non-negative at infinity. The later case contradicts with $(\ref{ineqscalarcurvature1})$ for $y$ near infinity. Therefore, $b(y)$ must be a non-negative constant on $\mathbb{R}^n$ and
\begin{align*}
h(y)=a |y|^{4-n}+b
\end{align*}
with a constant $a>0$ and a constant $b$.

By $(\ref{criticalpoint})$,
\begin{align*}
\frac{d}{d r}(r^{\frac{n-4}{2}}h(r))=0\,\,\text{at}\,\,r=1.
\end{align*}
We then have $b=a>0$, which contradicts with Proposition \ref{propinequality}. In fact, Proposition \ref{propinequality} applies to isolated simple blowup points with respect the sequence of metrics $\{h_j\}$ with uniform curvature bound and uniform bound of injectivity radius with the property that $Q_{h_j}>0$ and $R_{h_j}>0$.
\end{proof}

\section{Compactness of solutions to the constant $Q$-curvature equations}
Based on Proposition \ref{propupperbound} and Proposition \ref{propisolatedsingularpoints}, proof of compactness of the solutions is more or less standard, see in \cite{Li-Zhu}. But again we need to deal with the limit of the blowup argument carefully, see Lemma \ref{lembehaviornearlargepoint} and Proposition \ref{propdistanceofsingularities}.

We first show that there are no bubble accumulations.

\begin{lem}\label{lembehaviornearlargepoint}
Let $(M^n, g)$ be a closed Riemannian manifold of dimension $5\leq n \leq 9$ with $R_g\geq 0$, and also $Q_g\geq 0$ with $Q_g(p_0)>0$ for some point $p_0\in M$. For any given $\epsilon>0$ and large constant $R>1$, there exist some constant $C_1>0$ depending on $M,\,g,\,\epsilon,\,R$, $\|Q_g\|_{C^1(M)}$ such that for any solution $u$ to $(\ref{equation1})$ and any compact subset $K \subset M$ satisfying
\begin{align*}
&\max_{p\in M-K}d(p, K)^{\frac{n-4}{2}}u(p)\geq C_1,\,\,\,\text{if}\,\,K\neq\emptyset,\,\,\text{and}\\
&\max_{p\in M}u(p)\geq C_1,\,\,\,\text{if}\,\,K=\emptyset,
\end{align*}
we have that there exists some local maximum point $p'$ of $u$ in $M-K$ with $B_{R\,u(p')^{-\frac{2}{n-4}}}(p') \subset M-K$ satisfying
\begin{align}\label{ineqlimitbehavior1}
\|u(p')^{-1}u(\exp_{p'}(u(p')^{-\frac{2}{n-4}}y))\,-\,(1+4^{-1}|y|^2)^{-\frac{n-4}{2}}\|_{C^4(|y|\leq 2R)}<\epsilon.
\end{align}

\end{lem}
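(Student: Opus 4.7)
The plan is to argue by contradiction via a Schoen-type selection followed by the same blow-up analysis carried out in Lemma \ref{lemlimitmodel}.

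\textbf{Contradiction setup.} Suppose the conclusion fails. Then there exist $\epsilon_0 > 0$, $R_0 > 1$, a sequence of positive solutions $u_j$ to (\ref{equation1}), and compact sets $K_j \subset M$ (possibly empty) such that
\begin{align*}
\max_{p \in M - K_j} d_g(p, K_j)^{(n-4)/2} u_j(p) \to \infty \qquad \text{(resp. } \max_M u_j \to \infty \text{ if } K_j = \emptyset\text{)},
\end{align*}
yet no local maximum point $p'$ of $u_j$ in $M - K_j$ with $B_{R_0 u_j(p')^{-2/(n-4)}}(p') \subset M - K_j$ satisfies (\ref{ineqlimitbehavior1}).

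\textbf{Selection.} When $K_j \neq \emptyset$, let $\bar{p}_j$ realize the maximum of $F_j(p) := d_g(p, K_j)^{(n-4)/2} u_j(p)$ and set $\sigma_j = d_g(\bar{p}_j, K_j)$; when $K_j = \emptyset$ let $\bar{p}_j$ be a global maximizer of $u_j$ and take $\sigma_j$ to be half the injectivity radius of $(M,g)$. Since $\sigma_j$ is bounded, $M_j := u_j(\bar{p}_j) \to \infty$. A Schoen-type selection, using the auxiliary weight $\tilde{F}_j(q) = (\sigma_j - d_g(q, \bar{p}_j))^{(n-4)/2} u_j(q)$ on $B_{\sigma_j}(\bar{p}_j)$ (which vanishes on the boundary) followed by an iterated maximization on balls shrinking by factor $1/2$, produces an interior local maximum $p_j'$ of $u_j$ together with $\rho_j>0$ satisfying
\begin{align*}
u_j(p_j') \geq M_j \to \infty, \quad u_j \leq C\, u_j(p_j') \text{ on } B_{\rho_j}(p_j'), \quad \rho_j\, u_j(p_j')^{2/(n-4)} \to \infty,
\end{align*}
and $B_{\rho_j}(p_j') \subset M - K_j$. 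The key quantitative input is $\sigma_j M_j^{2/(n-4)} = F_j(\bar{p}_j)^{2/(n-4)} \to \infty$.

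\textbf{Blow-up and classification.} Fix normal coordinates at $p_j'$, let $r_j = u_j(p_j')^{-2/(n-4)}$, and set
\begin{align*}
v_j(y) = u_j(p_j')^{-1} u_j(\exp_{p_j'}(r_j y)), \qquad (h_j)_{pq}(y) = g_{pq}(r_j y),
\end{align*}
for $|y| \leq \rho_j / r_j$. Then $v_j(0)=1$, $\nabla v_j(0) = 0$ by local maximality, $0 < v_j \leq C$, $h_j \to \delta$ in $C^k_{loc}$, and
\begin{align*}
P_{h_j} v_j = \tfrac{n-4}{2} \bar{Q}\, v_j^{(n+4)/(n-4)}, \qquad \Delta_{h_j} v_j \leq \tfrac{n-4}{4(n-1)} R_{h_j} v_j.
\end{align*}
Exactly as in Lemma \ref{lemlimitmodel}, elliptic regularity gives $v_j \to v$ in $C^4_{loc}(\mathbb{R}^n)$ with $\Delta^2 v = \tfrac{n-4}{2}\bar{Q} v^{(n+4)/(n-4)}$, $v(0)=1$, $\Delta v \leq 0$, $v \geq 0$; strong maximum principle gives $v > 0$; and C.-S. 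Lin's classification forces $v(y) = (1+|y|^2/4)^{-(n-4)/2}$. Hence for $j$ large, $B_{R_0 r_j}(p_j') \subset B_{\rho_j}(p_j') \subset M - K_j$ and $\|v_j - v\|_{C^4(|y| \leq 2R_0)} < \epsilon_0$, contradicting the failure of (\ref{ineqlimitbehavior1}).

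\textbf{Main obstacle.} The delicate step is the selection: upgrading $\bar{p}_j$ (which is only an extremum of a weighted quantity) to an honest interior local maximum $p_j'$ of $u_j$ that still admits the Harnack-type bound $u_j \leq C u_j(p_j')$ on a ball of radius much larger than $u_j(p_j')^{-2/(n-4)}$, while keeping the ball inside $M - K_j$. The standard remedy is the iterated maximization on balls of radius $\tau_j/2^{k}$ centered at the previous maximizer: $u_j$ is strictly monotone along the iterates, and the vanishing of $\tilde{F}_j$ at the boundary of $B_{\sigma_j}(\bar{p}_j)$ prevents the procedure from drifting toward $K_j$; termination yields the desired $p_j'$.
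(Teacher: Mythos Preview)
Your overall architecture --- contradiction, blow-up, Lin's classification --- matches the paper, but you have inserted an unnecessary and incompletely justified step. The ``main obstacle'' you flag is one you created: you insist on producing, \emph{before} blowing up, a genuine local maximum $p_j'$ of $u_j$ that simultaneously carries the Harnack bound $u_j\le C\,u_j(p_j')$ on a ball of scale $\gg u_j(p_j')^{-2/(n-4)}$. Your description of how to achieve this (``iterated maximization on balls shrinking by factor $1/2$'') is too vague to be a proof; it is not clear the iteration terminates at an interior critical point of $u_j$ rather than just at another weighted maximizer, nor that the Harnack radius survives each step.

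The paper sidesteps this entirely. It blows up directly at the maximizer $x_j$ of the weighted quantity $d_g(\cdot,K_j)^{(n-4)/2}u_j(\cdot)$; the weighted maximality alone already yields $0<v_j\le 2^{(n-4)/2}$ on the growing domain $|y|\le \tfrac14 u_j(x_j)^{2/(n-4)}d_g(x_j,K_j)$, since points in that domain stay at distance $\ge \tfrac12 d_g(x_j,K_j)$ from $K_j$. No local-maximum information at $x_j$ is needed for this. Passing to the limit and invoking Lin gives a bubble $v(y)=\big(\lambda/(1+4^{-1}\lambda^2|y-\bar y|^2)\big)^{(n-4)/2}$ with $|\bar y|\le C(n)$; \emph{only then} does the paper pick $y_j\to\bar y$ a local maximum of $v_j$, set $p_j=\exp_{x_j}(u_j(x_j)^{-2/(n-4)}y_j)$, and repeat the blow-up at $p_j$ to obtain the centered bubble. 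Thus the honest local maximum is located \emph{a posteriori} from the limit, not manufactured \emph{a priori} by a selection scheme.

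In short: drop the iterated-halving selection, blow up at $\bar p_j$ itself, accept an off-center limit bubble, then recentre. That is both simpler and complete.
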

\begin{proof}
We argue by contradiction. That is to say, there exist a sequence of compact subsets $K_j$ and a sequence of solutions $u_j$ to $(\ref{equation1})$ on $M$ such that
\begin{align*}
\max_{p\in M-K_j}d(p, K_j)^{\frac{n-4}{2}}u(p)\geq j,
\end{align*}
with $d(p, K_j)=1$ when $K_j=\emptyset$, but no point satisfies $(\ref{ineqlimitbehavior1})$. We choose $x_j \in M-K_j$ satisfying
\begin{align*}
d_g(x_j, K_j)^{\frac{n-4}{2}}u_j(x_j)=\max_{p\in M- K_j}d_g(p, K_j)^{\frac{n-4}{2}}u_j(p).
\end{align*}
We then define
\begin{align*}
v_j(y)=u_j(x_j)^{-1}u_j(\exp_{x_j}(u_j(x_j)^{-\frac{2}{n-4}}y)),\,\,\text{for}\,\,|y|\leq R_j=\frac{1}{4}u_j(x_j)^{\frac{2}{n-4}}d_g(x_j, K_j).
\end{align*}
Let $h_j=u_j(x_j)^{\frac{4}{n-4}}g$. The resecaled function $v_j$ satisfies
\begin{align}
P_{h_j}v_j=\frac{n-4}{2}\bar{Q}v_j^{\frac{n+4}{n-4}},
\end{align}
and by Theorem \ref{thm1},
\begin{align}
\Delta_{h_j}v_j\leq \frac{(n-4)}{4(n-1)}R_{h_j}v_j.
\end{align}
We will analyze limit of the sequence $\{v_j\}$ as in Theorem \ref{thmlowerbound} and conclude that $(\ref{ineqlimitbehavior1})$ indeed holds. By assumption,
\begin{align*}
R_j=\frac{1}{4}u_j(x_j)^{\frac{2}{n-4}}d_g(y_j, K_j)\geq \frac{1}{4}j^{\frac{2}{n-4}},
\end{align*}
and
\begin{align*}
d_g(\exp_{x_j}(u_j(x_j)^{-\frac{2}{n-4}}y), K_j)\geq \frac{1}{2}d_g(x_j, K_j),\,\,\text{for}\,\,|y|\leq R_j.
\end{align*}
It follows that
\begin{align*}
0< v_j(y)&=u_j(x_j)^{-1}u_j(\exp_{x_j}(u_j(x_j)^{-\frac{2}{n-4}}y))\\
&\leq u_j(x_j)^{-1} d_g(\exp_{x_j}(u_j(x_j)^{-\frac{2}{n-4}}y), K_j)^{-\frac{n-4}{2}}d_g(x_j, K_j)^{\frac{n-4}{2}}u_j(x_j)\\
&\leq 2^{\frac{n-4}{2}},\,\,\text{for}\,\,|y|\leq R_j.
\end{align*}
Standard elliptic estimates imply that up to a subsequence,
\begin{align*}
v_j\to v\,\,\text{in}\,\,C_{loc}^4(\mathbb{R}^n),
\end{align*}
with $v$ satisfying
\begin{align*}
&\Delta^2v=\frac{n-4}{2}\bar{Q}v^{\frac{n+4}{n-4}}\,\,\text{in}\,\,\mathbb{R}^n,\\
&v(0)=1,\,\,0\leq v \leq \,\,2^{\frac{n-4}{2}}\,\,\text{in}\,\,\mathbb{R}^n,\\
&\Delta v\leq 0,\,\,\text{in}\,\,\mathbb{R}^n.
\end{align*}
By strong maximum principle, $v>0$ in $\mathbb{R}^n$. Then by the classification theorem of C.S. Lin (\cite{Lin}),
\begin{align*}
v(y)=\big(\frac{\lambda}{1+4^{-1}\lambda^2|y-\bar{y}|^2}\big)^{\frac{n-4}{2}}\,\,\text{in}\,\,\mathbb{R}^n,
\end{align*}
with $v(0)=1$ and $v(y)\leq \lambda^{\frac{n-4}{2}}\leq 2^{\frac{n-4}{2}}$. Therefore, $|\bar{y}|\leq C(n)$ with $C(n)>0$ only depending on $n$. We choose $y_j$ to be the local maximum point of $v_j$ converging to $\bar{y}$. Then $p_j=\exp_{x_j}(u_j(x_j)^{-\frac{2}{n-4}}y_j)\in M-K_j$ is a local maximum point of $u_j$. We now repeat the blowup argument with $x_j$ replaced by $p_j$ and $u_j(x_j)$ replaced by $u_j(p_j)$ and obtain the limit
\begin{align*}
v(y)=(1+4^{-1}|y|^2)^{-\frac{n-4}{2}}\,\,\text{in}\,\,\mathbb{R}^n.
\end{align*}
Therefore, for large $j$, there exists $p_j\in M- K_j$ so that $(\ref{ineqlimitbehavior1})$ holds. This contradicts with the assumption. Therefore, the proof of the lemma is completed.
\end{proof}
\begin{lem}\label{lemalargefunction}
Let $(M^n, g)$ be a closed Riemannian manifold of dimension $5\leq n \leq 9$ with $R_g\geq 0$, and also $Q_g\geq 0$ with $Q_g(p_0)>0$ for some point $p_0\in M$. For any given $\epsilon>0$ and a large constant $R>1$, there exist some constants $C_1>0$ and $C_2>0$ depending on $M,\,g,\,\epsilon,\,R$, $\|Q_g\|_{C^1(M)}$ such that for any solution $u$ to $(\ref{equation1})$ with
\begin{align*}
\max_{p\in M}u(p)> C_1,
\end{align*}
there exists some integer $N=N(u)$ depending on $u$ and $N$ local maximum points $\{p_1,...,p_N\}$ of $u$ such that
\begin{enumerate}[label=\roman{*})]
\item  for $i\neq j$,
\begin{align*}
\overline{B_{\gamma_i}(p_i)}\bigcap \overline{B_{\gamma_j}(p_j)}=\emptyset,
\end{align*}
with $\gamma_j=Ru(p_j)^{-\frac{2}{n-4}}$ and $B_{\gamma_j}(p_j)$ the geodesic $\gamma_j$-ball centered at $p_j$, and
\begin{align}\label{ineqlimitbehavior2}
\|u(p_j)^{-1}u(\exp_{p_j}(u(p_j)^{-\frac{2}{n-4}}y))\,-\,(1+4^{-1}|y|^2)^{-\frac{n-4}{2}}\|_{C^4(|y|\leq 2R)}<\epsilon,
\end{align}
where $y=u(p_j)^{\frac{2}{n-4}}x$, with $x$ geodesic normal coordinates centered at $p_j$, and $|y|=\sqrt{(y^1)^2+..+(y^n)^2}$.

\item for $i < j$, $d_g(p_i, p_j)^{\frac{n-4}{2}}u(p_j)\geq C_1$, while for $p\in M$
\begin{align*}
d_g(p, \{p_1,..,p_n\})^{\frac{n-4}{2}}u(p)\leq C_2.
\end{align*}
\end{enumerate}
\end{lem}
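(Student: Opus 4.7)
The plan is to extract the collection $\{p_1,\ldots,p_N\}$ by an inductive greedy procedure, each step invoking Lemma \ref{lembehaviornearlargepoint}. I initialize $K_0=\emptyset$; by the hypothesis $\max_M u>C_1$, Lemma \ref{lembehaviornearlargepoint} produces a local maximum $p_1$ with $B_{\gamma_1}(p_1)\subset M$ satisfying $(\ref{ineqlimitbehavior2})$ at $j=1$. Given $p_1,\ldots,p_k$, I set $K_k=\bigcup_{i\leq k}\overline{B_{\gamma_i}(p_i)}$. If $\max_{p\in M-K_k}d_g(p,K_k)^{(n-4)/2}u(p)\geq C_1$, I apply Lemma \ref{lembehaviornearlargepoint} with $K=K_k$ to obtain $p_{k+1}\in M-K_k$ satisfying $(\ref{ineqlimitbehavior2})$ and with $B_{\gamma_{k+1}}(p_{k+1})\subset M-K_k$, which automatically gives the disjointness required in (i); otherwise the procedure stops and I set $N=k$. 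To see termination for a fixed $u$, note $\max_M u<\infty$ since $u\in C^4(M)$ on a compact manifold, so $\gamma_j=Ru(p_j)^{-2/(n-4)}\geq R(\max_M u)^{-2/(n-4)}>0$ is uniformly bounded below in $j$; since the balls $B_{\gamma_j}(p_j)$ are pairwise disjoint and $\mathrm{Vol}_g(M)<\infty$, only finitely many fit, giving $N=N(u)<\infty$.

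Property (ii) splits into two estimates. For $i<j$, inspection of the proof of Lemma \ref{lembehaviornearlargepoint} shows $p_j$ is produced within bounded rescaled distance from the exact maximizer $x_j$ of $d_g(\cdot,K_{j-1})^{(n-4)/2}u$, and the bubble profile $(1+|y|^2/4)^{-(n-4)/2}$ together with $(\ref{ineqlimitbehavior2})$ forces $u(p_j)\sim u(x_j)$ and $d_g(p_j,K_{j-1})\sim d_g(x_j,K_{j-1})$; hence $d_g(p_j,K_{j-1})^{(n-4)/2}u(p_j)\gtrsim C_1$. Since $p_i\in K_{j-1}$ we have $d_g(p_j,p_i)\geq d_g(p_j,K_{j-1})$, so the first inequality in (ii) follows upon replacing $C_1$ by a harmless multiplicative factor. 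For the second inequality, given $p\in M$ let $p_{i^*}$ be the nearest point of $\{p_1,\ldots,p_N\}$ to $p$. If $p\in M-K_N$ and $d_g(p,p_{i^*})\geq 2\gamma_{i^*}$, then $d_g(p,K_N)\geq d_g(p,p_{i^*})/2$, and the stopping condition $d_g(p,K_N)^{(n-4)/2}u(p)<C_1$ yields $d_g(p,p_{i^*})^{(n-4)/2}u(p)\leq 2^{(n-4)/2}C_1$. If instead $d_g(p,p_{i^*})<2\gamma_{i^*}$, or $p\in K_N$, then $p$ lies within $2Ru(p_{i^*})^{-2/(n-4)}$ of $p_{i^*}$, and I invoke the Harnack inequality Lemma \ref{lemH} together with the $C^4$ closeness to the standard bubble in $(\ref{ineqlimitbehavior2})$ to obtain $u(p)\leq Cu(p_{i^*})$, so $d_g(p,p_{i^*})^{(n-4)/2}u(p)\leq CR^{(n-4)/2}$. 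Taking $C_2$ to be the larger of the two bounds finishes (ii).

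The main obstacle is not any new analytic input but rather the bookkeeping of constants. Lemma \ref{lembehaviornearlargepoint}, Lemma \ref{lemH}, and the bubble profile all carry constants with implicit dependencies on $\epsilon,R,M,g,\|Q_g\|_{C^1}$, and one must fix $\epsilon$ small and $R$ large once at the outset so that: (a) the bubble approximation at each extracted $p_j$ is sharp enough to transfer a lower bound from the exact maximizer $x_j$ to $p_j$; (b) the disjoint balls $B_{\gamma_j}(p_j)$ are genuinely contained in $M-K_{j-1}$ so that the induction closes; and (c) the Harnack constant controlling $u$ on balls of size comparable to $\gamma_{i^*}$ is uniform in $j$ and in the solution $u$. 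Once these compatibilities are arranged, the remaining argument is essentially formal.
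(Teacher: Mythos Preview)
Your approach is the same inductive greedy extraction as the paper's, and the termination argument via volume (instead of the paper's energy $\int_M u^{2n/(n-4)}$) is valid and in fact simpler for a fixed $u$.

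There is, however, a genuine flaw in your case analysis for the second inequality in (ii). You split according to whether $d_g(p,p_{i^*})\geq 2\gamma_{i^*}$ for the \emph{nearest} center $p_{i^*}$, and claim in that case $d_g(p,K_N)\geq d_g(p,p_{i^*})/2$. This fails: take disjoint balls with $p_1=0$, $\gamma_1=1$, $p_2=(10,0,\ldots)$, $\gamma_2=6$, and $p=(3,0,\ldots)$. Then $i^*=1$, $d_g(p,p_1)=3\geq 2\gamma_1$, but $d_g(p,K_N)=\min(3-1,\,7-6)=1<3/2$. The correct split, which the paper uses, is $p\in\bigcup_j B_{2\gamma_j}(p_j)$ versus $p\notin\bigcup_j B_{2\gamma_j}(p_j)$. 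In the first case, pick \emph{any} $j$ with $d_g(p,p_j)\leq 2\gamma_j$ and use the bubble profile $(\ref{ineqlimitbehavior2})$ at $p_j$ to get $u(p)\leq 2u(p_j)$, hence $d_g(p,\{p_1,\ldots,p_N\})^{(n-4)/2}u(p)\leq d_g(p,p_j)^{(n-4)/2}\cdot 2u(p_j)\leq 2(2R)^{(n-4)/2}$. In the second case, $d_g(p,p_j)\geq 2\gamma_j$ for \emph{all} $j$, so $d_g(p,K_N)=\min_j(d_g(p,p_j)-\gamma_j)\geq \min_j d_g(p,p_j)/2=d_g(p,p_{i^*})/2$, and the stopping condition gives the bound.

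Also, invoking Lemma~\ref{lemH} is misplaced: that Harnack inequality is stated for a sequence near an isolated blowup point, not for a single solution on a fixed ball. You do not need it here; the $C^4$ bubble approximation $(\ref{ineqlimitbehavior2})$ on $|y|\leq 2R$ already gives $u(p)\leq (1+\epsilon)u(p_j)$ for $d_g(p,p_j)\leq 2\gamma_j$, which is all that is required.
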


\begin{proof}
We will use Lemma \ref{lembehaviornearlargepoint} and prove the lemma by induction. To start, we apply Lemma \ref{lembehaviornearlargepoint} with $K=\emptyset$. We choose $p_1$ to be a maximum point of $u$ and $(\ref{ineqlimitbehavior2})$ holds. Next we let $K=\overline{B_{\gamma_1}(p_1)}$.

Assume that for some $i_0\geq 1$, $i)$ in the lemma holds for $1\leq j\leq i_0$ and $1\leq i<j$, and also $d_g(p_i, p_j)^{\frac{n-4}{2}}u(p_j)\geq C_1$ with $p_j$ chosen as in Lemma \ref{lembehaviornearlargepoint} by induction.( This holds for $i_0=1$.) Then we let $K=\bigcup_{j=1}^{i_0}\overline{B_{\gamma_j}(p_j)}$. It follows that for $\epsilon>0$ small, for any $p$ such that $d_g(p, p_j)\leq 2\gamma_j$ with $1\leq j \leq i_0$, we have
\begin{align*}
d_g(p, \{p_1,..,p_{i_0}\})^{\frac{n-4}{2}}u(p)&\leq d_g(p, p_j)^{\frac{n-4}{2}}u(p)\leq 2 d_g(p, p_j)^{\frac{n-4}{2}} u(p_j)\\
&\leq 2 ( 2R u(p_j)^{-\frac{2}{n-4}})^{\frac{n-4}{2}}u(p_j)=2^{\frac{n-2}{2}}R^{\frac{n-4}{2}},
\end{align*}
and therefore, for $p\in \bigcup_{j=1}^{i_0}\overline{B_{2\gamma_j}(p_j)}$,
\begin{align}\label{ineqinball}
d_g(p, \{p_1,..,p_{i_0}\})^{\frac{n-4}{2}}u(p)\leq 2^{\frac{n-2}{2}}R^{\frac{n-4}{2}}.
\end{align}

If for all $p\in M$
\begin{align*}
d_g(p, \{p_1,..,p_{i_0}\})^{\frac{n-4}{2}}u(p)\leq C_1,
\end{align*}
the induction stops. Else, we apply Lemma \ref{lembehaviornearlargepoint}, and we denote $p_{i_0+1}$ as the local maximum point $y_0$ obtained in Lemma \ref{lembehaviornearlargepoint} so that
\begin{align*}
B_{R\,u(p_{i_0+1})^{-\frac{2}{n-4}}}(p_{i_0+1}) \subset M-K.
\end{align*}
Therefore, $i)$ in the lemma holds for $i_0+1$.  Also, by assumption, $d_g(p_j, p_{i_0+1})^{\frac{n-4}{2}}u(p_{i_0+1})> C_1$. By the same argument, $(\ref{ineqinball})$ holds for $i_0$ replaced by $i_0+1$. The induction must stop in a finite time $N=N(u)$, since $\int_M u^{\frac{2n}{n-4}}dV_g$ is bounded and that
\begin{align*}
\int_{B_{\gamma_j}(p_j)}u^{\frac{2n}{n-4}}dV_g
\end{align*}
is bounded below by a uniform positive constant. It is clear now that for $p \in M - \bigcup_{j=1}^NB_{\gamma_j}(p_j)$,
\begin{align*}
d(p, \{p_1,..,p_N\})^{\frac{n-4}{2}}u(p)\leq 2^{\frac{n-4}{2}}d(p, \bigcup_{j=1}^NB_{\gamma_j}(p_j))^{\frac{n-4}{2}}u(p)\leq 2^{\frac{n-4}{2}}C_1.
\end{align*}
By induction, $(\ref{ineqinball})$ holds for $i_0$ replaced by $N$. We set $C_2=2^{\frac{n-2}{2}}R^{\frac{n-4}{2}}+2^{\frac{n-4}{2}}C_1$. This proves the lemma.
\end{proof}
The next proposition rules out the bubble accumulations.
\begin{prop}\label{propdistanceofsingularities}
Let $(M^n, g)$ be a closed Riemannian manifold of dimension $n=5$ 
 with $R_g\geq 0$, and also $Q_g\geq 0$ with $Q_g(p_0)>0$ for some point $p_0\in M$. For $\epsilon>0$ small enough and a constant $R>1$ large enough, there exists $\gamma>0$ depending on $M, g, \epsilon, R,$ $\|R_g\|_{C^1(M)}$ and $\|Q_g\|_{C^1(M)}$ such that for any solution $u$ to $(\ref{equation1})$ with $\max_{p\in M}u(p)> C_1$, we have
\begin{align*}
d(p_i, p_j)\geq \gamma,
\end{align*}
for $1\leq i,\,j\leq N$ and $i\neq j$, where $N=N(u)$, $p_j=p_j(u)$, $p_i=p_i(u)$ and $C_1$ are defined in Lemma \ref{lemalargefunction}.
\end{prop}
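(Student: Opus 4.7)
The plan is by contradiction. Assume a sequence $\{u_j\}$ of positive solutions and indices $i_j\ne k_j$ with
\[
\sigma_j:=d_g(p_{i_j}^{(j)},p_{k_j}^{(j)}) = \min_{i\ne k}d_g(p_i^{(j)},p_k^{(j)})\longrightarrow 0,
\]
and, after relabeling, $M_j:=u_j(p_{i_j}^{(j)})\ge u_j(p_{k_j}^{(j)})$. Rescale at $p_{i_j}$ by $\sigma_j$: in normal coordinates $x$, set $y=\sigma_j^{-1}x$, $h_j=\sigma_j^{-2}g$, and $v_j(y)=\sigma_j^{(n-4)/2}u_j(\exp_{p_{i_j}}(\sigma_j y))$. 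Then $v_j$ solves the prescribed $Q$-curvature equation for $h_j$, the metrics $h_j$ converge to the Euclidean metric in $C^k_{loc}$, the rescaled images $\tilde p_a^{(j)}$ are pairwise at distance $\ge 1$, and $|\tilde p_{k_j}|=1$. Write $V_j:=v_j(0)$ and $W_j:=v_j(\tilde p_{k_j})$; disjointness of the balls $B_{\gamma_a}(p_a^{(j)})$ in Lemma~\ref{lemalargefunction} forces $V_j,W_j\ge R^{(n-4)/2}$. One first checks $V_j\to\infty$: otherwise $V_j^{2/(n-4)}=M_j^{2/(n-4)}\sigma_j$ is bounded, and a further rescaling by $M_j^{-2/(n-4)}$ at $p_{i_j}$ yields, via Lemma~\ref{lemlimitmodel}, the standard bubble $(1+|z|^2/4)^{-(n-4)/2}$ as $C^4_{loc}$-limit; but the twice-rescaled image of $p_{k_j}$ stays at distance $\ge R$ as a local maximum, so by $C^4$-convergence its limit would be a nonzero critical point of the standard bubble, contradicting the uniqueness of the latter's critical point.

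By Lemma~\ref{lemalargefunction}(ii) and minimality of $\sigma_j$, the origin is an isolated blowup point of $v_j$ in $B_{1/2}(0)$; Proposition~\ref{propisolatedsingularpoints} (applicable to the rescaled sequence since $\{h_j\}$ has uniform geometric bounds with $Q_{h_j}\ge 0$, $R_{h_j}>0$, cf.\ the end of its proof) shows it is isolated simple. Proposition~\ref{propupperbound} then yields, up to a subsequence,
\[
V_j v_j(y)\to H(y)=a\,c_n|y|^{4-n}+b(y)\quad\text{in }C^4_{loc}(B_{\delta_1}(0)\setminus\{0\}),
\]
with $a>0$ and $\Delta^2 b=0$ near $0$, and Proposition~\ref{propinequality} forces $b(0)=0$. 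One next shows $W_j\to\infty$ and $V_j/W_j\to L\in[1,\infty)$: if $W_j$ stayed bounded, the bubble at $\tilde p_{k_j}$ would have bounded width so $\int_{B_\rho(\tilde p_{k_j})}v_j^{(n+4)/(n-4)}\ge c>0$, and the Green's representation $v_j(y)=\tfrac{n-4}{2}\bar Q\int G_{h_j}(y,z)v_j^{(n+4)/(n-4)}\,dV_{h_j}$ would give $V_j v_j(y)\to\infty$ at fixed $y\in B_{\delta_1}(0)\setminus\{0\}$, contradicting finiteness of $H(y)$; likewise, $V_j/W_j\to\infty$ is ruled out by the lower bound of Remark~\ref{remark01} applied at $\tilde p_{k_j}$, which would yield $V_j v_j(y)\ge C(V_j/W_j)d_{h_j}(y,\tilde p_{k_j})^{4-n}\to\infty$. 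Hence $\tilde p_{k_j}\to\tilde q_k$, $|\tilde q_k|=1$, is a second isolated simple blowup of $v_j$.

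The final step is to extend the convergence globally and identify $H$ explicitly. Using Lemma~\ref{lemH} together with the Green-function profile $v_j\sim V_j^{-1}|y|^{4-n}$ (respectively $W_j^{-1}|y-\tilde q_k|^{4-n}$) outside the two bubble cores, $V_j v_j$ is uniformly bounded on compact subsets of $\mathbb{R}^n\setminus\{0,\tilde q_k\}$, so $V_j v_j\to H$ in $C^4_{loc}(\mathbb{R}^n\setminus\{0,\tilde q_k\})$ with $H>0$ biharmonic on its domain; Proposition~\ref{propupperbound} applied at $\tilde p_{k_j}$, together with $V_j/W_j\to L$, identifies the second singularity as $La_2c_n|y-\tilde q_k|^{4-n}$ for some $a_2>0$. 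Repeating the Liouville-type argument from the proof of Proposition~\ref{propisolatedsingularpoints} (polynomial growth from Harnack plus the inherited scalar-curvature inequality $-\Delta H^{(n-2)/(n-4)}\ge 0$), the function $H(y)-ac_n|y|^{4-n}-La_2c_n|y-\tilde q_k|^{4-n}$ is a non-negative constant, so
\[
H(y)=a\,c_n|y|^{4-n}+La_2c_n|y-\tilde q_k|^{4-n}+\mathrm{const},
\]
and thus $b(0)=La_2c_n|\tilde q_k|^{4-n}+\mathrm{const}\ge La_2c_n>0$, contradicting $b(0)=0$. The principal obstacle is this last step, which combines (i) uniform global boundedness of $V_j v_j$ to extend the convergence beyond $B_{\delta_1}(0)$, (ii) Proposition~\ref{propupperbound} at both bubble centers with normalizations aligned via $V_j/W_j\to L$ so as to read off both singular coefficients, and (iii) the Liouville-type rigidity plus the scalar-curvature condition to rule out higher-order polynomial corrections. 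Once these are in place the neighboring bubble supplies a strictly positive regular part at $0$, playing the role of the positive-mass contribution in Schoen's compactness argument for the Yamabe problem.
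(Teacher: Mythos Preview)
Your overall strategy matches the paper's: argue by contradiction, rescale by the minimal pairwise distance $\sigma_j$, show the two nearest centers become isolated simple blowup points of $v_j$, pass to a limit, and produce a strictly positive constant term at the origin that contradicts Proposition~\ref{propinequality}. Two points deserve comment.

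First, a minor issue: your argument that $V_j\to\infty$ invokes Lemma~\ref{lemlimitmodel} at $p_{i_j}$, but that lemma requires an isolated blowup point in a ball of \emph{fixed} radius, whereas $p_{i_j}$ is isolated only in $B_{\sigma_j/2}(p_{i_j})$ with $\sigma_j\to 0$. After rescaling by $M_j^{-2/(n-4)}$ you therefore control the rescaled function only on balls of radius comparable to $V_j^{2/(n-4)}$, and the image of $p_{k_j}$ sits at exactly this scale, so the critical-point contradiction is not cleanly available. The paper argues instead directly at the $\sigma_j$-scale: if both $V_j$ and $W_j$ were bounded, $v_j$ would subconverge to a positive entire solution with two distinct local maxima, contradicting Lin's classification; and if one diverges the other must too, by Proposition~\ref{propupperbound} and the Harnack inequality.

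Second, and more substantively, your final step tacitly assumes that $\{0,\tilde q_k\}$ are the \emph{only} blowup points of $v_j$. Nothing prevents other rescaled centers $\tilde p_a^{(j)}$ (all pairwise at distance $\ge 1$) from remaining in a bounded region and producing further singularities of the limit; the full blowup set $K_0$ may be larger, even infinite. With extra singularities present, $H-ac_n|y|^{4-n}-La_2c_n|y-\tilde q_k|^{4-n}$ is not globally biharmonic and your Liouville step from Proposition~\ref{propisolatedsingularpoints} does not apply. The paper avoids any global identification of $H$: writing $F=a_1|y|^{4-n}+\Phi_1$ with $\Phi_1$ regular at $0$, it applies the maximum principle twice on $\mathbb{R}^n\setminus(K_0\setminus\{0\})$. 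First to $\xi=\Delta\Phi_1$, which is harmonic there, tends to $-\infty$ at each point of $K_0\setminus\{0\}$, and has $\limsup_{|y|\to\infty}\xi\le 0$ since $\Delta F\le 0$; this gives $\Delta\Phi_1<0$. Then to $\Phi_1$ itself, now superharmonic with $\Phi_1\to+\infty$ at the singularities and $\liminf_{|y|\to\infty}\Phi_1\ge 0$, yielding $\Phi_1(0)>0$. This produces the contradiction with Proposition~\ref{propinequality} regardless of how many points $K_0$ contains. Your argument can be repaired along these lines, but as written the Liouville step is incomplete.
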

\begin{proof}
Suppose the proposition fails, which implies that there exist $\epsilon>0$ small and $R>0$ large and a sequence of solutions $u_j$ to $(\ref{equation1})$ such that $\max_{p\in M}u_j(p)> C_1$ and
\begin{align*}
\lim_{j\to\infty}\min_{i\neq k}d(p_i(u_j), p_k(u_j))=0.
\end{align*}
We denote $p_{j,1}$ and $p_{j,2}$ to be the two points realizing minimum distance in $\{p_1(u_j),..,p_N(u_j)\}$ of $u_j$ constructed in Lemma \ref{lemalargefunction}. Let $\bar{\gamma}_j=d_g(p_{j,1}, p_{j,2})$. Since
\begin{align*}
B_{Ru_j(p_{1,j})^{-\frac{2}{n-4}}}(p_{1,j})\bigcap B_{Ru_j(p_{2,j})^{-\frac{2}{n-4}}}(p_{2,j})=\emptyset,
\end{align*}
we have that $u_j(p_{1,j})\to \infty$ and $u_j(p_{2,j})\to \infty$.

Let $x=(x^1,..,x^n)$ be geodesic normal coordinates centered at $p_{1, j}$, $y=\bar{\gamma}_j^{-1} x$, $\exp_{p_{1,j}}(x)$ be exponential map under the metric $g$. We define the scaled metric $h_j=\bar{\gamma}_j^{\frac{4}{n-4}}g$, and the rescaled function
\begin{align*}
v_j(y)=\bar{\gamma}_j^{\frac{2}{n-4}}u_j(\exp_{p_{1,j}}(\bar{\gamma}_jy)).
\end{align*}
It follows that $v_j$ satisfies $v_j>0$ in $|y|\leq \bar{\gamma}_j^{-1}r_0$ and that
\begin{align}
&\label{ineq7.1}P_{h_j}v_j(y)=\frac{n-4}{2}\bar{Q}v_j(y)^{\frac{n+4}{n-4}},\,\,\text{for}\,\,|y|\leq \bar{\gamma}_j^{-1}r_0,\\
&\label{ineq7.2}\Delta_{h_j}v_j\leq \frac{(n-4)}{4(n-1)}R_{h_j}v_j,\,\,\text{for}\,\,|y|\leq \bar{\gamma}_j^{-1}r_0,
\end{align}
where $r_0$ is half of the injectivity radius of $(M, g)$. We define $y_k=y_k(u_j)\in \mathbb{R}^n$ such that $\exp_{p_{1,j}}(\bar{\gamma}_jy_k)=p_k$ for the points $p_k(u_j)$. It follows that for $p_k\neq p_{1, j}$,
\begin{align*}
|y_k|\geq 1+o(1)
\end{align*}
with $o(1)\to 0$ as $j\to \infty$. Let $y_{2, j}\in \mathbb{R}^n$ be so that $p_{2,j}=\exp_{p_{1, j}}(\bar{\gamma}_jy_{2, j})$. Then
\begin{align*}
|y_{2, j}|\to 1\,\,\text{as}\,\,j\to \infty.
\end{align*}
It follows that there exists $\bar{y} \in \mathbb{R}^n$ with $|\bar{y}|=1$ such that up to a subsequence,
\begin{align*}
\bar{y}=\lim_{j\to \infty}y_{2,j}.
\end{align*}
By Lemma \ref{lemalargefunction},
\begin{align*}
\bar{\gamma}_j\geq C \max\{Ru_j(p_{1,j})^{-\frac{2}{n-4}},\,Ru_j(p_{2,j})^{-\frac{2}{n-4}}\}.
\end{align*}
Therefore, we have
\begin{align*}
&v_j(0)\geq C_3,\,\,v_j(y_{2,j})\geq C_3\,\,\text{for some}\,\,C_3>0\,\,\text{independent of}\,\,j,\\
&y_k\,\,\text{is a local maximum point of }\,\,v_j\,\,\text{for all}\,\,1\leq k\leq N(u_j),\\
&\min_{1\leq k\leq N(u_j)}|y-y_k|^{\frac{n-4}{2}}v_j(y)\leq C_2\,\,\text{for all}\,\,|y|\leq \bar{\gamma}_j^{-1}.
\end{align*}
We {\bf claim} that
\begin{align}
v_j(0)\to \infty,\,\,\text{and}\,\,v_j(y_{2,j})\to \infty.
\end{align}
To see this, we first assume that one of them tends to infinity up to a subsequence, say $v_j(0)\to \infty$ for instance. It is clear that $0$ is an isolated blowup point, and by Proposition \ref{propisolatedsingularpoints} it is an isolated simple blowup point. Then $v_j(y_{2,j})\to \infty$ in this subsequence since otherwise, by the control $(\ref{ineqlimitbehavior2})$ at $p_{2, j}$ in Lemma \ref{lemalargefunction} and the rescaling, $v_j$ is uniformly bounded in a uniform neighborhood of $y_{2,j}$ and therefore by Harnack inequality $(\ref{ineqH})$ and Proposition \ref{propupperbound}, $v_j\to 0$ near $p_{2,j}$, contradicting with $v_j(y_{2,j})\geq C_3$. If both $v_j(0)$ and $v_j(y_{2,j})$ are uniformly bounded, similar argument shows that $v_j$ is uniformly bounded on any fixed compact subset of $\mathbb{R}^n$. Then as discussed in Lemma \ref{lembehaviornearlargepoint}, $v_j\to v$ in $C_{loc}^4(\mathbb{R}^n)$ with $v>0$ and
\begin{align*}
\Delta^2v=\frac{n-4}{2}\bar{Q}v^{\frac{n+4}{n-4}}
\end{align*}
 in $\mathbb{R}^n$. Also, $0$ and $\bar{y}$ are local maximum points of $v$. That contradicts with the classification theorem in \cite{Lin}. The {\bf claim} is established.
 Therefore, both $0$ and $\bar{y}$ are isolated simple blowup points of $v_j$. Let $K_0$ be the set of blowup points of $\{v_j\}$ after passing to a subsequence. It is clear that $0, \bar{y}\in K_0$ and for any two distinct points $y, z \in K$, $d_g(y, z)\geq 1$. By Proposition \ref{propupperbound}, $v_j(0)v_j$ is uniformly bounded in any fixed compact subset of $\mathbb{R}^n-K_0$. Multiplying $v_j(0)$ on both sides of $(\ref{ineq7.1})$ and $(\ref{ineq7.2})$, we have that up to a subsequence,
 \begin{align*}
 \lim_jv_j(0)v_j\to F\geq 0\,\,\text{in}\,\,C_{loc}^4(\mathbb{R}^n-K_0),
 \end{align*}
 such that
 \begin{align}
 &\label{ineq7-a}\Delta^2F=0,\,\,\text{in}\,\,\mathbb{R}^n-K_0,\\
 &\label{ineq7-b}\Delta F\leq 0,\,\,\text{in}\,\,\mathbb{R}^n-K_0.
 \end{align}
 Since all the blowup points in $K_0$ are isolated simple blowup points, by Proposition \ref{propupperbound},
 \begin{align*}
 F(y)=a_1|y|^{4-n}+\Phi_1(y)=a_1|y|^{4-n}+a_2|y-\bar{y}|^{4-n}+\Phi_2(y)
 \end{align*}
 for $y \in \mathbb{R}^n-K_0$ with the constants $a_1, a_2>0$. Moreover, $\Phi_2\in C^4(\mathbb{R}^n-(K_0-\{0, \bar{y}\}))$ and $\Phi_2$ satisfies $(\ref{ineq7-a})$ in $\mathbb{R}^n-(K_0-\{0, \bar{y}\})$. We define $\xi= \Delta \Phi_1$ in $\mathbb{R}^n-(K_0-\{0\})$. By $(\ref{ineq7-b})$, $F>0$ in $\mathbb{R}^n-K_0$. Therefore,
 \begin{align}
 &\label{ineq71a}\liminf_{|y|\to \infty}\Phi_1(y)=\liminf_{|y|\to \infty} (F(y)-a_1|y|^{4-n})\geq 0,\\
 &\liminf_{|y|\to \infty}\xi(y)=\liminf_{|y|\to \infty}\Delta (F(y)-a_1|y|^{4-n})\leq 0.
 \end{align}
 Moreover, $\xi<0$ near any isolated point in $\mathbb{R}^n-(K_0-\{0\})$ by Proposition \ref{propupperbound}. Applying strong maximum principle to $\xi$ and the equation
 \begin{align*}
 \Delta \xi=\Delta^2(F-a_1|y|^{4-n})=0
 \end{align*}
 in $\mathbb{R}^n-(K_0-\{0\})$, we have that
 \begin{align*}
 \xi=\Delta \Phi_1<0
 \end{align*}
 in $\mathbb{R}^n-(K_0-\{0\})$. Since $\Phi_1>0$ near any isolated point in $\mathbb{R}^n-(K_0-\{0\})$ by Proposition \ref{propupperbound}, and also $(\ref{ineq71a})$ holds, applying strong maximum principle to $\Phi_1$ and $\Delta \Phi_1<0$ in $\mathbb{R}^n-(K_0-\{0\})$, we have that $\Phi_1>0$ in $\mathbb{R}^n-(K_0-\{0\})$.
 It follows that
 \begin{align*}
 F(y)=a_1|y|^{4-n}+\Phi_1(0)+O(|y|)\,\,\text{with}\,\,\Phi_1(0)>0\,\,\text{near}\,\,y=0,
 \end{align*}
 contradicting with Proposition \ref{propinequality}.(It is easy to check that Proposition \ref{propinequality} applies for the scaled metrics $h_j$ instead of $g$.) Proposition \ref{propdistanceofsingularities} is then established.
\end{proof}
We are now ready to prove the compactness theorem of positive solutions to the equation $(\ref{equation1})$.

\begin{proof}[Proof of Theorem \ref{thm27}]
By Lemma \ref{lemlowerbound1} and ellipticity theorem for $(\ref{equation1})$, we only need to show that there is a constant $C>0$ depending on $M$ and $g$ such that
\begin{align*}
u\leq C.
\end{align*}
Suppose the contrary, then there exists a sequence of positive solutions $u_j$ to $(\ref{equation1})$ such that
\begin{align*}
\max_{p\in M}u_j\to \infty
\end{align*}
as $j\to \infty$. By Proposition \ref{propdistanceofsingularities}, after passing to a subsequence, there exists $N$ isolated simple blowup points $p_{1, j}\to p_1,\,...,\,p_{N, j}\to p_N$ with $N\geq 1$ independent of $j$. Applying Proposition \ref{propupperbound}, we have that up to a subsequence,
\begin{align*}
u_j(p_{1,j})u_j(p)\to F(p)=\sum_{k=1}^Na_kG_g(p_k, p)+b(p)\,\,\text{in}\,\,C_{loc}^4(M-\{p_1,..,p_N\}),
\end{align*}
where $a_1>0,\, ... ,\,a_N>0$ are  some constants, $G_g$ is Green's function of $P_g$ under the metric $g$ and $b(p)\in C^4(M)$ satisfying
\begin{align*}
P_gb=0
\end{align*}
in $M$. Since $Q_g\geq 0$ on $M$ with $Q_g>0$ at some point, by the strong maximum principle of $P_g$, $b=0$ in $M$. We know that $G_g(p_k, p)>0$ for $1\leq k\leq N$ by Theorem \ref{thm1}. Let $x=(x^1,..,x^n)$ be conformal normal coordinates( see \cite{Lee-Parker}) centered at $p_{1,j}$( resp. $p_1$) with respect to the conformal metric $h_j=\phi_j^{-\frac{4}{n-4}}g$(resp. $h=\phi^{-\frac{4}{n-4}}g$) such that
\begin{align*}
\det(h_{ij})=1+O(|x|^{10n}).
\end{align*}
Then there exists $C_1>0$ independent of $j$ such that
\begin{align*}
C_1^{-1}\leq \phi_j\leq C_1,
\end{align*}
and
\begin{align*}
\|\phi_j-\phi\|_{C^5(M)}\to 0\,\,\text{as}\,\,j\to \infty.
\end{align*}
As shown in Theorem \ref{thm1}, under the conformal normal coordinates $x$ centered at $p_1$, the Green's function under metric $h$ satisfies
\begin{align*}
G_h(p_1, p)=\phi^2(p)G_g(p_1, p)=d_h(p_1, p)^{4-n}+A+o(1)
\end{align*}
near $p_1$ with the constant $A>0$ and $o(1)\to 0$ as $p\to p_1$. Therefore,
 \begin{align*}
 \phi(p)^2F(p)=a_1d_h(p_1, p)^{4-n}+B+o(1)
 \end{align*}
 with $B=a_1A+\sum_{k=2}^Na_k\phi(p_1)^2G_g(p_k, p_1)>0$ and $o(1)\to 0$ as $p\to p_1$. Note that since $\phi_j$ are uniformly controlled in the construction of conformal normal coordinates and the corresponding metrics, the conclusions in Corollary \ref{corerrorterms} and consequently in Proposition \ref{propinequality} still hold for $g$ replaced by the conformal metrics $h_j$ and $u_j$ replaced by $\tilde{u}_j=\phi_j u_j$. This leads to a contradiction. Therefore, Theorem \ref{thm27} is established.
\end{proof}

\section{Appendix: Positive solutions of certain linear fourth order elliptic equations in punctured balls}
Assume $B_{\delta}(\bar{x})$ is a geodesic $\delta$-ball on $\mathbb{R}^n$ under the metric $g$ with $2\delta$ less than the injectivity radius. For application, for $5\leq n \leq 9$ it could sometime be assumed as a geodesic $\delta$-ball embedded in a closed Riemannian manifold $(M^n, g)$, where $(M, g)$ is as in Proposition \ref{propupperbound}.
\begin{lem}\label{lemlinear9.1}
Let $u \in C^4(B_{\delta}(\bar{x})-\{\bar{x}\})$ be a solution to
\begin{align}\label{equlinear}
P_g u=0\,\,\text{in}\,\,B_{\delta}(\bar{x})-\{\bar{x}\}.
\end{align}
If $u(p)=o(d_g(p, \bar{x}))^{4-n}$ as $p\to \bar{x}$, then $u\in C_{loc}^{4, \alpha}(B_{\delta}(\bar{x}))$ for $0<\alpha <1$.
\end{lem}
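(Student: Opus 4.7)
The plan is to show that the pointwise growth condition $u(p) = o(d_g(p,\bar{x})^{4-n})$ forces $u$ to satisfy $P_g u = 0$ on all of $B_\delta(\bar{x})$ in the distributional sense, and then to invoke interior regularity for the fourth-order elliptic operator $P_g$ (whose coefficients are smooth, since $g$ is smooth) to conclude $u \in C^{4,\alpha}_{loc}(B_\delta(\bar{x}))$ for every $0<\alpha<1$.

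First I would observe that the hypothesis, combined with local integrability of $d_g(\cdot,\bar x)^{4-n}$ on an $n$-dimensional ball with $n\geq 5$, gives $u\in L^1_{loc}(B_\delta(\bar x))$, after extending $u$ arbitrarily at the single point $\bar x$. I would then exploit the fact that $P_g$ is formally self-adjoint (each of its three pieces -- $\Delta_g^2$, the divergence-form second-order term, and multiplication by $\tfrac{n-4}{2} Q_g$ -- is self-adjoint), so that the statement $P_g u = 0$ distributionally on $B_\delta(\bar{x})$ reduces to proving
\begin{align*}
\int_{B_\delta(\bar x)} u\, P_g \varphi \, dV_g = 0 \qquad \text{for all } \varphi \in C^\infty_c(B_\delta(\bar{x})).
\end{align*}

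To prove this identity I would use a standard radial cutoff argument. Fix $\varphi$ and choose a smooth function $\chi_\epsilon$ which equals $0$ on $B_\epsilon(\bar{x})$, equals $1$ outside $B_{2\epsilon}(\bar{x})$, and satisfies $|\nabla^k \chi_\epsilon|\leq C_k\epsilon^{-k}$ for $0\leq k\leq 4$. Decompose $\varphi = \chi_\epsilon\varphi + (1-\chi_\epsilon)\varphi$. The first piece $\chi_\epsilon\varphi$ is smooth and compactly supported in the region where $u$ is $C^4$ and satisfies $P_g u=0$ classically, so a standard integration by parts gives $\int u\, P_g(\chi_\epsilon\varphi)\, dV_g = 0$. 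For the second piece, the derivative bounds on $\chi_\epsilon$ and smoothness of the coefficients of $P_g$ yield $\|P_g((1-\chi_\epsilon)\varphi)\|_{L^\infty} \leq C(\varphi, g)\,\epsilon^{-4}$, while the hypothesis gives $\sup_{B_{2\epsilon}(\bar x)}|u| = o(\epsilon^{4-n})$ and $|B_{2\epsilon}(\bar x)|_g \leq C\epsilon^n$. Combining these,
\begin{align*}
\Big|\int_{B_\delta(\bar x)} u\, P_g((1-\chi_\epsilon)\varphi)\, dV_g\Big| \leq C(\varphi,g)\cdot o(\epsilon^{4-n})\cdot \epsilon^{-4}\cdot \epsilon^n = o(1)
\end{align*}
as $\epsilon\to 0$, which establishes the distributional identity.

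The last step is routine: since $P_g$ is a fourth-order uniformly elliptic operator with smooth coefficients, any $L^1_{loc}$ distributional solution of $P_g u = 0$ is smooth in the interior, so in particular $u\in C^{4,\alpha}_{loc}(B_\delta(\bar x))$ for every $0<\alpha<1$. The only real obstacle is the remainder estimate above; this is the place where the precise rate $o(d_g^{4-n})$ is used, rather than the borderline $O(d_g^{4-n})$. Without the little-$o$ improvement one would only control the distributional $P_g u$ up to a term of the form $c\,\delta_{\bar x}$, which is precisely what the assumption is designed to eliminate (it corresponds to $u$ having a Green's function singularity of strength $c$ at $\bar x$).
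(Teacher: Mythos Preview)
Your proposal is correct and follows essentially the same route as the paper: both use a radial cutoff at scale $\epsilon$ together with self-adjointness of $P_g$ to remove the singularity distributionally, exploiting the bound $u=o(d_g^{4-n})$ to kill the remainder, and then finish by elliptic regularity. The only cosmetic difference is that the paper explicitly records $u\in L^p_{loc}$ for $1<p<\tfrac{n}{n-4}$ and invokes $W^{4,p}$ estimates (citing Agmon) before bootstrapping, whereas you pass directly from $L^1_{loc}$ to smoothness.
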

\begin{proof} The proof is standard.

\noindent Step 1. We show that $(\ref{equlinear})$ holds in $B_{\delta}(\bar{x})$ in distribution sense.

To see this, given any small $\epsilon>0$, we define the cutoff function $\eta_{\epsilon}$ on $B_{\delta}(\bar{x})$ with $0<\eta_{\epsilon}<1$ so that
\begin{align*}
&\eta_{\epsilon}(p)=1\,\,\,\,\,\,\text{for}\,\,d_g(p, \bar{x})\leq \epsilon,\\
&\eta_{\epsilon}(p)=0\,\,\,\,\,\,\text{for}\,\,d_g(p, \bar{x})\geq 2\epsilon,\\
&|\nabla\eta_{\epsilon}(p)|\leq C\epsilon^{-1}\,\,\text{for}\,\,\epsilon\leq d_g(p, \bar{x})\leq 2\epsilon.
\end{align*}
For any given $\phi \in C_c^{\infty}(B_{\delta}(\bar{x}))$ we multiply $\phi (1-\eta_{\epsilon})$ on both side of $(\ref{equlinear})$ and do integration by parts,
\begin{align*}
\int_{B_{\delta}(\bar{x})}P_g(\phi (1-\eta_{\epsilon}))u dV_g=0.
\end{align*}
Let $\epsilon\to 0$, then
\begin{align*}
\int_{B_{\delta}(\bar{x})}P_g\phi (1-\eta_{\epsilon})u dV_g=O(1)(C\epsilon^{-4}\int_{B_{2\epsilon}(\bar{x})-B_{\epsilon}(\bar{x})}|u|)+C\int_{B_{\epsilon}(\bar{x})}|u|\to 0,
\end{align*}
where in the last step we have used $u(p)=o(d_g(p, \bar{x}))^{4-n}$. Therefore, Step 1 is established.

\noindent Step 2. The assumption of $u$ near $\bar{x}$ implies that $u\in L^p_{loc}(B_{\delta}(\bar{x}))$ for any $1<p<\frac{n}{n-4}$. By $W^{4, p}$ estimates of the elliptic equation we obtain that $u\in W_{loc}^{4,p}(B_{\delta}(\bar{x}))$, see \cite{S.Agmon} for instance. Then standard bootstrap argument gives $u\in C_{loc}^{4, \alpha}(B_{\delta}(\bar{x}))$.

\end{proof}
For later use, we now employ Lemma 9.2 from \cite{Li-Zhu} without proof.
\begin{lem}\label{lemGreenfunction}
There exists some constant $0<\delta_0\leq \delta$ depending on $n,\,\|g_{ij}\|_{C^2(B_{\delta}(\bar{x}))}$ and $\|R_g\|_{L^{\infty}(B_{\delta}(\bar{x}))}$ such that the maximum principle for $-\frac{4(n-1)}{n-2}\Delta_g+R_g$ holds on $B_{\delta_0}(\bar{x})$, and there exists a unique $G_1(p)\in C^2(B_{\delta_0}(\bar{x})-\{\bar{x}\})$ satisfying
\begin{align*}
&-\frac{4(n-1)}{n-2}\Delta_gG_1+R_gG_1=0\,\,\text{in}\,\,B_{\delta_0}(\bar{x})-\{\bar{x}\},\\
&G_1=0\,\,\text{on}\,\,\partial B_{\delta_0}(\bar{x}),\\
&\lim_{p\to \bar{x}}d_g(p, \bar{x})^{n-2}G_1(p)=1.
\end{align*}
Furthermore, $G_1(p)=d_g(p, \bar{x})^{2-n}+\mathcal {R}(p)$ where $\mathcal {R}(p)$ satisfies for all $0<\epsilon <1$ that
\begin{align*}
d_g(p, \bar{x})^{n-4+\epsilon}|\mathcal {R}(p)|+ d_g(p, \bar{x})^{n-3+\epsilon}|\nabla \mathcal {R}(p)|\leq C(\epsilon),\,\,p\in\,B_{\delta_0}(\bar{x}),\,\,n\geq 4,
\end{align*}
where $C(\epsilon)$ depends on $\epsilon$, $n$, $\|g_{ij}\|_{C^2(B_{\delta}(\bar{x}))}$ and $\|R_g\|_{L^{\infty}(B_{\delta}(\bar{x}))}$.
\end{lem}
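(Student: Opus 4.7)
\textbf{Proof proposal for Lemma \ref{lemGreenfunction}.} The plan follows the classical parametrix construction for the conformal Laplacian $L_g := -\frac{4(n-1)}{n-2}\Delta_g + R_g$, adapted to a small geodesic ball. First I would establish the maximum principle on $B_{\delta_0}(\bar{x})$ for $\delta_0$ sufficiently small. Since $\|R_g\|_{L^\infty(B_\delta(\bar{x}))}$ is finite while the first Dirichlet eigenvalue $\lambda_1(-\Delta_g, B_{\delta_0}(\bar{x}))$ grows like $\delta_0^{-2}$ (by comparison with the Euclidean model in geodesic normal coordinates, where $g_{ij} = \delta_{ij} + O(|x|^2)$), for $\delta_0$ sufficiently small depending only on $n$, $\|g_{ij}\|_{C^2}$, and $\|R_g\|_{L^\infty}$, the quadratic form of $L_g$ is coercive on $H^1_0(B_{\delta_0}(\bar{x}))$. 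The strong and weak maximum principles for $L_g$ follow in the usual way.

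Next I would construct $G_1$ using a parametrix. Work in geodesic normal coordinates $x$ centered at $\bar{x}$, so $|x| = d_g(p, \bar{x})$, and set $\Phi(x) := |x|^{2-n}$. Using $g^{ij}(x) = \delta^{ij} + O(|x|^2)$ and $\Gamma^k_{ij}(x) = O(|x|)$, a direct computation gives
\begin{align*}
L_g \Phi(x) = O(|x|^{-n+2}) \quad \text{for } x \neq 0,
\end{align*}
which is borderline integrable: it lies in $L^p_{loc}$ for every $1 \leq p < n/(n-2)$. I then solve the Dirichlet problem
\begin{align*}
L_g \mathcal{R} = -L_g \Phi \text{ in } B_{\delta_0}(\bar{x}), \qquad \mathcal{R} = -\Phi \text{ on } \partial B_{\delta_0}(\bar{x}),
\end{align*}
via Lax--Milgram (together with the coercivity established above), obtaining a weak solution $\mathcal{R}$ which is $W^{2,p}_{loc}$ for $p < n/(n-2)$. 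Setting $G_1 := \Phi + \mathcal{R}$ gives a function in $C^2(B_{\delta_0}(\bar{x}) - \{\bar{x}\})$ that satisfies $L_g G_1 = 0$ in the punctured ball and vanishes on $\partial B_{\delta_0}(\bar{x})$. Uniqueness follows because two such Green's functions differ by a solution of $L_g w = 0$ in the punctured ball that is $o(|x|^{2-n})$ near the origin; Lemma \ref{lemlinear9.1}-style reasoning (after relaxing to the conformal Laplacian) removes the singularity, and then the maximum principle yields $w \equiv 0$.

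The main technical obstacle is the weighted pointwise estimate $|\mathcal{R}(p)| \leq C(\epsilon)|x|^{4-n+\epsilon}$ with the $\epsilon$-loss, together with the corresponding gradient estimate. I would prove this by a dyadic rescaling/bootstrap argument. On the annulus $A_k := \{2^{-k-1}\delta_0 \leq |x| \leq 2^{-k}\delta_0\}$, rescale to unit size via $y = 2^{k} x / \delta_0$; the rescaled right-hand side satisfies $\|L_g \Phi\|_{L^p(A_k)} \leq C \cdot 2^{k(n-2-n/p)}$. A first application of interior $W^{2,p}$ estimates on the rescaled ball, combined with Morrey/Sobolev embedding, gives an initial $L^\infty$ bound $|\mathcal{R}(x)| \leq C \cdot |x|^{4-n-\epsilon'}$ for some small loss coming from the borderline integrability. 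Iterating this scheme (i.e., reinserting the improved bound on $\mathcal{R}$ into the equation viewed on successive annuli) upgrades the exponent to $4-n+\epsilon$ for arbitrary $\epsilon > 0$, with the gradient estimate following from $C^{1,\alpha}$ interior estimates on the same dyadic shells. The delicate point throughout is tracking that the constants depend only on $n$, $\|g_{ij}\|_{C^2}$, $\|R_g\|_{L^\infty}$, and $\epsilon$; this is the heart of the argument and parallels Lemma~9.2 of \cite{Li-Zhu} verbatim, which is why the statement is invoked here without reproducing the calculation.
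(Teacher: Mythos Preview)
The paper does not prove this lemma at all: it states explicitly ``we now employ Lemma 9.2 from \cite{Li-Zhu} without proof.'' Your sketch is a correct outline of the standard parametrix construction for the conformal Laplacian, and you rightly identify the source at the end. Two minor remarks on the sketch itself. First, your appeal to ``Lemma \ref{lemlinear9.1}-style reasoning'' for uniqueness is slightly misdirected, since that lemma treats the fourth-order Paneitz operator; what you actually need here is the classical second-order removable-singularity statement (if $L_g w=0$ in the punctured ball and $w=o(|x|^{2-n})$ then $w$ extends smoothly), which is more elementary. Second, for $n\geq 5$ the weighted bound on $\mathcal{R}$ can be read off directly from the Newton-potential estimate
\[
\int_{B_{\delta_0}}|x-y|^{2-n}\,|y|^{2-n}\,dy \leq C\,|x|^{4-n}
\]
(split into $|y|<|x|/2$, $|x-y|<|x|/2$, and the remainder), so the dyadic bootstrap you describe, while correct, is heavier than necessary; the $\epsilon$-loss in the statement is only genuinely forced when $n=4$, where a logarithm appears.
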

\begin{lem}\label{lem9.3}
Suppose a positive function $u\in C^4(B_{\delta}(\bar{x})-\{\bar{x}\})$ satisfies $(\ref{equlinear})$ in $B_{\delta}(\bar{x})-\{\bar{x}\}$, and assume that there exists a constant $C>0$ such that for $0< r < \delta$, the Harnack inequality holds:
\begin{align*}
\max_{d_g(p, \bar{x})=r}u(p)\leq C \min_{d_g(p, \bar{x})=r}u(p).
\end{align*}
If moreover,
\begin{align*}
-\frac{4(n-1)}{n-2}\Delta_gu^{\frac{n-2}{n-4}}+R_gu^{\frac{n-2}{n-4}}\geq 0\,\,\text{in}\,\,B_{\delta}(\bar{x})-\{\bar{x}\},
\end{align*}
then
\begin{align*}
a=\limsup_{p\to \bar{x}} d_g(p, \bar{x})^{n-4}u(p)<+\infty.
\end{align*}
\end{lem}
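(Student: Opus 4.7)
The plan is to reduce the problem to a second-order supersolution problem and compare with the conformal-Laplacian Green's function $G_1$ from Lemma \ref{lemGreenfunction}. Set $v := u^{\frac{n-2}{n-4}}$, so the hypothesis becomes $L_g v \geq 0$ in $B_\delta(\bar x) - \{\bar x\}$ with $L_g := -\frac{4(n-1)}{n-2}\Delta_g + R_g$. Since $u = v^{\frac{n-4}{n-2}}$ is a monotone power of $v$, the Harnack inequality on spheres carries over from $u$ to $v$. Because $G_1(p) \sim d_g(p,\bar x)^{2-n}$, showing $a < \infty$ is equivalent to showing $v \leq C G_1$ on $B_{\delta_0}(\bar x) - \{\bar x\}$ for some constant $C$.

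The key computation is that the ratio $\tilde v := v/G_1 > 0$ satisfies a divergence-form inequality with \emph{no} zeroth-order term. Indeed, using $L_g G_1 = 0$ and expanding $L_g(\tilde v\, G_1)$, one gets
\[
0 \leq L_g v = -c\, G_1\, \Delta_g \tilde v - 2c\, g(\nabla_g \tilde v, \nabla_g G_1) = -\frac{c}{G_1}\,\text{div}_g(G_1^2 \nabla_g \tilde v),
\]
where $c = \frac{4(n-1)}{n-2}$, so $\text{div}_g(G_1^2 \nabla_g \tilde v) \leq 0$ on the punctured ball. Applying the divergence theorem on a geodesic annulus $A(r, r')$ with $0 < r < r' < \delta_0$, the flux
\[
\theta(r) := \int_{\partial B_r(\bar x)} G_1^2\, \partial_\nu \tilde v \, dS_g
\]
is \emph{non-increasing} in $r$. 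In geodesic normal coordinates at $\bar x$, using the expansion $G_1 = d^{2-n} + \mathcal R$ with the decay of $\mathcal R$ from Lemma \ref{lemGreenfunction}, together with the Harnack inequality for $\tilde v$ (inherited from $v$ and the almost-radial behavior of $G_1$), one gets to leading order $\theta(r) \approx \omega_{n-1}\, r^{3-n}\, \bar{\tilde v}'(r)$, where $\bar{\tilde v}$ is the spherical mean of $\tilde v$.

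From the monotonicity of $\theta$, the spherical mean $\bar{\tilde v}(r)$ is bounded above as $r \to 0$. Either $\theta \geq 0$ near $0$, in which case $\bar{\tilde v}' \geq 0$ and $\bar{\tilde v}(r) \leq \bar{\tilde v}(\delta_0/2)$ for $r < \delta_0/2$; or there is a small $r_0$ with $\theta(r_0) < 0$, in which case $\theta(r) \geq \theta(r_0)$ for $0 < r \leq r_0$ yields $\bar{\tilde v}'(r) \geq -C\, r^{n-3}$, and since $r^{n-3}$ is integrable near $0$ for $n \geq 4$, integration from $r$ to $r_0$ gives $\bar{\tilde v}(r) \leq \bar{\tilde v}(r_0) + \frac{C}{n-2}\, r_0^{n-2}$, a uniform bound. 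The Harnack inequality then turns this mean bound into a pointwise bound $\tilde v \leq C$ near $\bar x$; equivalently, $v \leq C G_1$, hence $u = v^{\frac{n-4}{n-2}} \leq C d_g(\cdot,\bar x)^{4-n}$, which gives $a < \infty$.

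The main technical obstacle is to pass rigorously from the exact divergence inequality for $\tilde v$ to the effective ODE monotonicity for $\bar{\tilde v}$. The error terms come from three sources: the deviation of $g$ from Euclidean in normal coordinates, the remainder $\mathcal R$ in the expansion of $G_1$, and the non-radial dependence of $\tilde v$ (which affects how well $\int_{\partial B_r} \partial_\nu \tilde v\, dS_g$ is approximated by $|\partial B_r| \bar{\tilde v}'(r)$). All three are of lower order and can be controlled using the quantitative decay estimates in Lemma \ref{lemGreenfunction} together with the Harnack inequality on spheres, but bookkeeping is required to verify they do not destroy the leading-order monotonicity argument.
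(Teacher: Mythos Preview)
Your approach is correct in spirit but considerably more elaborate than the paper's. The paper gives a short contradiction argument: if $a=+\infty$, then by the spherical Harnack inequality, for every $A>0$ there is a sequence $r_i\to 0$ with $u > A\, r_i^{4-n}$ on all of $\partial B_{r_i}(\bar x)$, hence $u^{\frac{n-2}{n-4}} \geq \tfrac{1}{2}A^{\frac{n-2}{n-4}} G_1$ there. Since $G_1 = 0$ on $\partial B_{\delta_0}(\bar x)$ the outer boundary comparison is free, and the maximum principle for $L_g$ on $B_{\delta_0}$ (granted by Lemma~\ref{lemGreenfunction}) forces $u^{\frac{n-2}{n-4}} \geq \tfrac{1}{2}A^{\frac{n-2}{n-4}} G_1$ on the whole punctured ball. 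Letting $A\to\infty$ gives $u\equiv +\infty$, a contradiction.

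Your route via the divergence identity $\operatorname{div}_g(G_1^2\nabla_g\tilde v)\leq 0$ and the monotone flux $\theta(r)$ is a genuinely different argument and would be the natural tool had the Dirichlet condition $G_1|_{\partial B_{\delta_0}}=0$ been unavailable, but it costs you the error analysis you flag at the end. One point to tighten: the spherical Harnack inequality controls $\tilde v$ pointwise but not $\partial_\nu\tilde v$, and the error in passing from $\theta(r)$ to $\omega_{n-1}r^{3-n}\bar{\tilde v}'(r)$ involves exactly this radial derivative. To close the loop you need the interior gradient estimate $|\nabla u|\leq C r^{-1}u$ on dyadic annuli, which comes from the fourth-order equation $P_g u=0$ together with Harnack, not from the second-order supersolution property alone. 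With that in hand the errors are of size $O(r^{1-\epsilon})\bar{\tilde v}(r)$ relative to the main term and a Gr\"onwall-type integration finishes. So your strategy can be completed, but the paper's direct barrier argument is shorter and sidesteps all of this bookkeeping.
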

\begin{proof}
If the lemma is not true, then for any $A>0$, there exists $r_i\to 0^+$ satisfying
\begin{align*}
u(p)>\,A\,r_i^{4-n},\,\,\text{for all}\,\,d_g(p, \bar{x})=r_i.
\end{align*}
Let $v_A=\frac{A^{\frac{n-2}{n-4}}}{2}G_1$ with $G_1$ in Lemma \ref{lemGreenfunction}. For $i$ large, by maximum principle,
\begin{align*}
u(p)^{\frac{n-2}{n-4}}\geq v_A(p)\,\,\text{for}\,\,r_i <d_g(p, \bar{x})<\delta_0.
\end{align*}
As $i\to \infty$, it holds that
\begin{align*}
u(p)^{\frac{n-2}{n-4}}\geq v_A(p)\,\,\,\,\text{for}\,\,0<d_g(p, \bar{x})<\delta_0.
\end{align*}
Since $A$ can be arbitrarily large, $u(p)=\infty$ in $0<d_g(p, \bar{x})<\delta_0$, which is a contradiction.
\end{proof}
\begin{prop}\label{propsingularity}
Let $u$ be as in Lemma \ref{lem9.3}. Then there exists a constant $b\geq 0$ such that
\begin{align}\label{equsingularity9.1}
u(p)=bG(p, \bar{x})+E(p)\,\,\text{for}\,\,p\in B_{\delta_0}(\bar{x})-\{\bar{x}\},
\end{align}
where $G$ is Green's function of $P_g$, ( for the existence of the Green's function in our application, it is limit of Green's function of Paneitz operator of a sequence of metrics on $M$ restricted to certain domains, and when $g$ is the flat metric, let $G(x,y)=c_n|x-y|^{4-n}$) and $\delta_0$ is defined in Lemma \ref{lemGreenfunction}. Here $E\in C^4(B_{\delta_0}(\bar{x}))$ satisfies $P_g E=0$ in $B_{\delta_0}(\bar{x})$.
\end{prop}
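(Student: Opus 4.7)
The plan is to identify the coefficient $b \geq 0$ of the leading singular profile of $u$ at $\bar{x}$, and then to argue that the remainder $E := u - bG(\cdot, \bar{x})$ has a removable singularity via Lemma \ref{lemlinear9.1}. By Lemma \ref{lem9.3}, the quantity $a := \limsup_{p \to \bar{x}} d_g(p,\bar{x})^{n-4} u(p)$ is finite, and combined with the spherical Harnack hypothesis this yields a two-sided bound $C^{-1} d_g(p,\bar{x})^{4-n} \leq u(p) \leq C d_g(p,\bar{x})^{4-n}$ near $\bar{x}$ when $a > 0$; the case $a = 0$ forces $u(p) = o(d_g(p,\bar{x})^{4-n})$, and Lemma \ref{lemlinear9.1} applied directly to $u$ gives $b = 0$ and $E = u$.

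To construct $b$ when $a > 0$, I would study the spherical average $\bar{u}(r) := |\partial B_r(\bar{x})|^{-1} \int_{\partial B_r(\bar{x})} u \, ds_g$. Integrating $P_g u = 0$ over the annulus $B_r(\bar{x}) - B_\epsilon(\bar{x})$ and using the divergence form of $P_g$ from $(\ref{operatorP})$ produces a flux identity that translates, after averaging, into an ODE for $\bar{u}(r)$ whose curvature error terms are controlled by the upper bound $u \leq Cr^{4-n}$. The four radial fundamental solutions of the flat biharmonic are $1, r^2, r^{2-n}, r^{4-n}$; the bound $u \leq Cr^{4-n}$ rules out the $r^{2-n}$ mode, and a Gronwall-type argument on the resulting ODE shows that the limit $\lim_{r \to 0} r^{n-4} \bar{u}(r) = b\, c_n$ exists, defining $b \in [0, a/c_n]$.

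With $b$ in hand, set $E := u - bG(\cdot,\bar{x})$, which satisfies $P_g E = 0$ on $B_{\delta_0}(\bar{x}) - \{\bar{x}\}$ and $\bar{E}(r) = o(r^{4-n})$ by construction. Using the Harnack inequality for $u$ together with the asymptotics $(\ref{expansion1})$--$(\ref{ineqGrp})$ for $G$ to transfer spherical-average decay into pointwise decay gives $E(p) = o(d_g(p,\bar{x})^{4-n})$ as $p \to \bar{x}$. Lemma \ref{lemlinear9.1} then yields $E \in C^{4,\alpha}_{loc}(B_{\delta_0}(\bar{x}))$ with $P_g E = 0$ classically, completing the decomposition. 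When $g$ is the flat metric so that $G(x,y) = c_n |x-y|^{4-n}$ is stipulated in the statement, the same argument applies and in fact simplifies, since the ODE for $\bar{u}(r)$ is exactly the radial biharmonic ODE with no curvature errors.

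The main obstacle is establishing that $\lim_{r \to 0} r^{n-4}\bar{u}(r)$ exists, rather than only its $\limsup$ being finite. The Harnack hypothesis controls only angular variation at each fixed scale, so one must combine the fourth-order equation $P_g u = 0$ with the superharmonicity hypothesis $-\tfrac{4(n-1)}{n-2}\Delta_g u^{(n-2)/(n-4)} + R_g u^{(n-2)/(n-4)} \geq 0$ --- precisely the pair of structural assumptions entering Lemma \ref{lem9.3} --- to rule out radial oscillation of the $r^{4-n}$ coefficient near $\bar{x}$. An alternative route is a blow-up argument: rescaling $u_r(y) = r^{n-4}u(\exp_{\bar{x}}(ry))$ on fixed annuli and passing to a limit gives a positive biharmonic function on $\mathbb{R}^n - \{0\}$ with superharmonic $(n-2)/(n-4)$-power, which by a Liouville-type classification must be of the form $\alpha |y|^{4-n} + \beta$; showing uniqueness of this limit (independent of the subsequence) is equivalent to the desired existence of $b$.
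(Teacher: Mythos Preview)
Your outline has a genuine gap at the transfer step, and the acknowledged obstacle (existence of $\lim_{r\to 0} r^{n-4}\bar u(r)$) is not actually resolved either.

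On the transfer: suppose you have established $\bar E(r)=o(r^{4-n})$. The spherical Harnack inequality for $u$ gives only $\max_{\partial B_r}u\le C\min_{\partial B_r}u$ with a fixed constant $C>1$; it does \emph{not} say that the oscillation of $u$ on $\partial B_r$ is $o(r^{4-n})$. Concretely, if $\bar u(r)=bc_n r^{4-n}+o(r^{4-n})$, Harnack yields only $C^{-1}bc_n r^{4-n}\le u(p)\le Cbc_n r^{4-n}$ on $\partial B_r$, hence $|E(p)|\le (C-1)bc_n r^{4-n}+o(r^{4-n})$, which is $O(r^{4-n})$, not $o(r^{4-n})$. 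So the hypothesis of Lemma~\ref{lemlinear9.1} is not reached, and the argument stalls. The Harnack hypothesis controls angular variation \emph{multiplicatively}, which is too weak to kill a nonradial $O(r^{4-n})$ remainder.

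On the existence of $b$: your two suggested routes are both incomplete. The Gronwall-type ODE argument for the $r^{4-n}$ coefficient of a fourth-order radial ODE with $O(1)$ curvature errors is not standard and would itself require work comparable to the proposition. The blow-up route reduces to uniqueness of the limit along subsequences, which is precisely the statement you are trying to prove; the Liouville classification alone does not give uniqueness.

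The paper's proof avoids both issues by a different mechanism: it factors $P_g$ through two applications of $\Delta_g$. Writing $\Delta_g(\Delta_g u)=\text{lower order in }u$, interior estimates from the bound $u\le Cd_g^{4-n}$ give $|\Delta_g u|\le Cd_g^{2-n}$ and the right-hand side is $O(d_g^{2-n})$. One then represents $\Delta_g u$ via the Dirichlet Green's function $G_2$ of $\Delta_g$ on $B_{\delta_0}$, invokes the \emph{second-order} B\^ocher-type result (Proposition~9.1 of Li--Zhu) to write $\Delta_g u=b_2G_1+\phi_1+\varphi_1$ with $G_1$ as in Lemma~\ref{lemGreenfunction}, and integrates once more against $G_2$. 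The bound $0<u\le Cd_g^{4-n}$ kills the $d_g^{2-n}$ mode, leaving $u=b\,d_g^{4-n}+o(d_g^{4-n})$ \emph{pointwise}; Lemma~\ref{lemlinear9.1} then applies. The convolution with $G_2$ is what upgrades integral/average control to pointwise control, replacing the failed Harnack transfer in your outline.
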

\begin{proof}

We rewrite $(\ref{equlinear})$ as
\begin{align*}
\Delta_g(\Delta_g u)= \text{div}_g(a_nR_g g -b_nRic_g)\nabla_gu - \frac{n-4}{2}Q_gu.
\end{align*}
By Lemma \ref{lem9.3}, $0 < u(p) \leq a_1 G(p, \bar{x})$ with some constant $a_1>a$ in $B_{\delta_0}(\bar{x})-\{\bar{x}\}$ with $\delta_0>0$ in Lemma \ref{lemGreenfunction}. Combining with the interior estimates, there exists a constant $C>0$ such that
\begin{align}\label{ineqboundlowerorderterms}
&|\text{div}_g(a_nR_g g -b_nRic_g)\nabla_gu - \frac{n-4}{2}Q_gu|\leq C d_g^{2-n}(p, \bar{x}),\,\,\text{and}\\
&\label{ineqLaplacian}|\Delta_g u(p)|\leq C\,d_g^{2-n}(p, \bar{x}),
\end{align}
for $p\in \overline{B}_{\delta_0}(\bar{x})-\{0\}$. We define $G_2$ to be a Green's function of $\Delta_g$ on $\overline{B}_{\delta_0}(\bar{x})$ such that
\begin{align}\label{inequkernel0}
0< G_2(p, q)\leq C d_g(p, q)^{2-n},
\end{align}
for some constant $C>0$ and any two distinct points $p$ and $q$ in $B_{\delta_0}(\bar{x})$.
Then
\begin{align*}
\phi_1(p)=\int_{B_{\delta_0}(\bar{x})}G_2(p, q)(\text{div}_g(a_nR_g g -b_nRic_g)\nabla_gu(q) - \frac{n-4}{2}Q_gu(q)) d V_g(q)
\end{align*}
is a special solution to the equation
\begin{align*}
\Delta_g \phi= \text{div}_g(a_nR_g g -b_nRic_g)\nabla_gu - \frac{n-4}{2}Q_gu,\,\,\text{in}\,\,B_{\delta_0}(\bar{x})-\{\bar{x}\}.
\end{align*}
Combining $(\ref{ineqboundlowerorderterms})$ and $(\ref{inequkernel0})$, we have that there exists a constant $C>0$ such that
\begin{align*}
|\phi_1(p)|\leq C d_g(p, \bar{x})^{4-n},
\end{align*}
for $p \in B_{\delta_0}(\bar{x})-\{\bar{x}\}$. Therefore,
\begin{align*}
\Delta_g(\Delta_g u- \phi_1)=0,\,\,\text{in}\,\,B_{\delta_0}(\bar{x})-\{\bar{x}\}.
\end{align*}
Since we also have $(\ref{ineqLaplacian})$, proof of Proposition 9.1 in \cite{Li-Zhu} applies and there exists a constant $-C\leq b_2\leq C$ such that
\begin{align*}
(\Delta_g u(p)- \phi_1(p))=b_2 G_1(p)+\varphi_1(p),\,\,\text{in}\,\,B_{\delta_0}(\bar{x})-\{\bar{x}\},
\end{align*}
with $G_1$ as in Lemma \ref{lemGreenfunction} and $\varphi_1$ a harmonic function on $\overline{B}_{\delta_0}(\bar{x})$. Therefore,
\begin{align*}
\Delta_g u(p)=b_2 G_1(p)+\phi_1(p)+\varphi_1(p),\,\,\text{in}\,\,B_{\delta_0}(\bar{x})-\{\bar{x}\}.
\end{align*}
By the same argument, there exists $ b_3 \in \mathbb{R}$ such that
\begin{align*}
u(p)&=b_3G_1(p)+\phi_2(p)+\int_{B_{\delta_0}(\bar{x})}G_2(p, q)[b_2 G_1(q)+\phi_1(q)+\varphi_1(q)] d V_g(q)\\
&=b_3G_1(p)+\phi_2(p)+O(d_g(p, \bar{x})^{4-n})
\end{align*}
in $B_{\delta_0}(\bar{x})-\{\bar{x}\}$, with $\varphi_2$ a harmonic function on $B_{\delta_0}(\bar{x})$. But since $0 < u(p) \leq a_1 G(p, \bar{x})$, we have $b_3=0$ and
\begin{align*}
u(p)=b_2\int_{B_{\delta_0}(\bar{x})}G_2(p, q) G_1(q) d V_g(q)+o(d_g(p, \bar{x})^{4-n})
\end{align*}
in $B_{\delta_0}(\bar{x})-\{\bar{x}\}$. Therefore, there exists a constant $b\geq 0$ such that
\begin{align*}
u(p)&=bd_g(p, \bar{x})^{4-n}+o(d_g(p, \bar{x})^{4-n})\\
&=bG(p, \bar{x})^{4-n}+o(d_g(p, \bar{x})^{4-n}).
\end{align*}
Then by Lemma \ref{lemlinear9.1}, there exists a function $E \in C^4(B_{\delta_0}(\bar{x}))$ satisfying $(\ref{equlinear})$ and
\begin{align*}
u(p)=bG(p, \bar{x})^{4-n}+E(p)
\end{align*}
for $p\in B_{\delta_0}(\bar{x})-\{\bar{x}\}$.

 This completes the proof of the proposition.
\end{proof}
Using Proposition \ref{propsingularity}, we immediately conclude the following corollary.

\begin{cor}\label{cor9.1}
For $n\geq 5$, assume that $u\in C^4(B_{\delta_0}(\bar{x})-\{\bar{x}\})$ is a positive solution of $(\ref{equlinear})$ with $\bar{x}$ a 
 singular point, and also the assumptions in Lemma \ref{lem9.3} holds for $u$. Then
\begin{align*}
\lim_{r\to 0}\int_{B_r(\bar{x})}(P_g u- \frac{n-4}{2}\bar{Q} u) d V_g&=\lim_{r\to 0}\int_{\partial B_r(\bar{x})}\big(\frac{\partial}{\partial \nu}\Delta_g u-(a_nR_g\frac{\partial}{\partial \nu} u-b_nRic_g(\nabla_g u, \nu))\big) ds_g\\
&=b\,\lim_{r\to 0}\int_{\partial B_r(\bar{x})}\frac{\partial}{\partial \nu}\Delta_gG(p, \bar{x}) ds_g(p)=2(n-2)(n-4)|\mathbb{S}^{n-1}|\,b>0,
\end{align*}
where $\nu$ is the outer unit normal and $b>0$ is as in $(\ref{equsingularity9.1})$.
\end{cor}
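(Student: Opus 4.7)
The plan is to combine the decomposition of $u$ near its singularity from Proposition \ref{propsingularity} with integration by parts for $P_g$, and then extract the leading singular contribution via the asymptotic expansion of $G(\cdot,\bar x)$. By Proposition \ref{propsingularity},
\[
u(p)=b\,G(p,\bar x)+E(p),\qquad p\in B_{\delta_0}(\bar x)\setminus\{\bar x\},
\]
with $b\geq 0$, $E\in C^4(B_{\delta_0}(\bar x))$ and $P_g E=0$. Since $\bar x$ is a genuine singularity of $u$ while $E$ is smooth, necessarily $b>0$; this will eventually yield the final strict positivity.

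For the first equality I would rewrite $P_g u=\Delta_g^2 u-\text{div}_g((a_n R_g g-b_n Ric_g)\nabla_g u)+\tfrac{n-4}{2}Q_g u$ and apply the divergence theorem on the annular region $B_r\setminus B_\epsilon$, where $u$ is smooth and $P_g u=0$ pointwise. Letting $\epsilon\to 0$, the inner boundary integrals over $\partial B_\epsilon$ converge, because $u=bG+E$, $G$ is controlled by the expansion $c_n d_g^{4-n}(1+f)$, and $E$ is smooth; the limit produces the claimed boundary expression on $\partial B_r$, provided the LHS is interpreted in the distributional sense so that $P_g u$ carries the point mass $b\,\delta_{\bar x}$. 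For the second equality I would substitute $u=bG+E$ into the boundary integral: the $E$-contributions are $O(r^{n-1})\to 0$ by smoothness of $E$, while the $G$-contributions coming from $a_n R_g\partial_\nu G$ and $b_n Ric_g(\nabla_g G,\nu)$ are $O(r^{3-n})\cdot O(r^{n-1})=O(r^2)\to 0$, using the derivative bound $|\nabla_g G|=O(d_g^{3-n})$ from \eqref{ineqGrp}. Only $b\int_{\partial B_r}\partial_\nu\Delta_g G\,ds_g$ survives.

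For the third equality I would work in $g$-normal coordinates at $\bar x$ with $r=|x|$. Since $\Delta_g-\Delta_{\text{Eucl}}$ acts as $O(|x|)D^2+O(1)D$ in such coordinates, and $|\nabla^j f|\leq C_j d_g^{1-j}$, one obtains
\[
\Delta_g G=2c_n(4-n)r^{2-n}+O(r^{3-n}),\qquad \partial_\nu\Delta_g G=2c_n(4-n)(2-n)r^{1-n}+O(r^{2-n}).
\]
Integrating against $ds_g=\omega_{n-1}r^{n-1}(1+O(r^2))$ and sending $r\to 0$ produces the stated constant $2(n-2)(n-4)|\mathbb{S}^{n-1}|$ multiplying $b$; together with $b>0$, this gives the strict positivity.

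The main technical point I expect to require care is tracking the $O(r^{3-n})$ correction to $\Delta_g G$ uniformly enough to rule out a spurious contribution on $\partial B_r$: since $\partial_\nu\Delta_g G$ has leading order $r^{1-n}$ and $|ds_g|=O(r^{n-1})$, the surviving integral is $O(1)$, so any mishandled correction of order $r^{1-n}$ would corrupt the limit. The sharp pointwise bounds $|\nabla^j f|\leq C_j d_g^{1-j}$ on the Green's function expansion from Theorem \ref{thm1} are precisely what make this work.
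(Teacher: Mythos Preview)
Your approach is exactly the one the paper has in mind: the paper offers no argument beyond ``Using Proposition \ref{propsingularity}, we immediately conclude the following corollary,'' and your three steps (divergence theorem for $P_g-\frac{n-4}{2}Q_g$, substitution of $u=bG+E$ with the smooth and lower-order terms vanishing on $\partial B_r$, and the explicit computation with the leading singularity of $G$) are precisely how one fills this in. Your justification of $b>0$ from the singularity hypothesis is also correct.

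One arithmetic point deserves a second look. With the normalization $c_n=\frac{1}{(n-2)(n-4)\omega_{n-1}}$ from \eqref{expansion1}, your own formulas give
\[
\int_{\partial B_r}\partial_\nu\Delta_g G\,ds_g \;\longrightarrow\; 2c_n(4-n)(2-n)\,\omega_{n-1}\;=\;2,
\]
not $2(n-2)(n-4)|\mathbb{S}^{n-1}|$. The paper is itself inconsistent here (compare the expansion in Theorem \ref{thm1}, which carries $c_n$, with the expansion used in the proof of Theorem \ref{thm27}, which does not), so the displayed constant in the corollary appears to assume the normalization $G\sim d_g^{4-n}$ without the $c_n$ factor. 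For the application in Lemma \ref{lemupperboundsphere} only strict positivity of the limit matters, so this normalization mismatch is harmless for the paper's argument, but you should not claim your computation reproduces the stated constant without flagging the discrepancy.
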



\begin{thebibliography}{s2}

\bibitem{S.Agmon} S. Agmon, {\em The $L^p$ approach to the Dirichlet problem. I. Regularity theorems. }, Ann. Scuola Norm. Sup. Pisa {\bf{13}} (1959), 405 - 448.

\bibitem{Armitage} D. H. Armitage, {\it A polyharmonic generalization of a theorem on harmonic functions,} J.
London Math. Soc. {\bf 7} (1973), no. 2, 251 - 258.

\bibitem{Branson} T. Branson, {\it Differential operators canonically associated to a conformal structure,} Math. Scand. {\bf 57} (1985), no. 2, 295 - 345.

\bibitem{Brendle} S. Brendle, {\it Blow-up phenomena for the Yamabe equation,} J. Am. Math. Soc.{\bf 21} (2008), no. 4, 951 - 979.

\bibitem{Brendle-Marques} S. Brendle, F. C. Mqraues, {\it Blow-up phenomena for the Yamabe equation II,} J. Differ. Geom. {\bf 81} (2009), no. 2, 225 - 250.

\bibitem{Chen-Lin} C. C. Chen, C. S. Lin,  {\it Estimate of the conformal scalar curvature equation via the method of
moving planes. II.,} J. Differential Geom. {\bf 49} (1998), 115 - 178.

\bibitem{Djadli-Hebey-Ledoux} Z. Djadli, E. Hebey, M. Ledoux,  {\it Paneitz-type operators and applications,} Duke Math. J. {\bf 104} (2000), no. 1, 129 - 169.

\bibitem{Druet} O. Druet,  {\it Compactness for Yamabe metrics in low dimensions,} Int. Math. Res.
Not. {\bf 23} (2004), 399 - 473.

\bibitem{Gursky-Hang-Lin} M. J. Gursky, F. B. Hang, Y. Lin, {\it Riemannian manifolds with positive Yamabe invariant and Paneitz operator}. Preprint, arXiv:1502.01050v3, (2015).

\bibitem{Gursky-Malchiodi} M. J. Gursky,  A. Malchiodi, {\it A strong maximum
principle for the Paneitz operator and a nonlocal flow for the $Q$ -curvature}. J. Eur. Math. Soc., to appear.

\bibitem{Hang-Yang} F. B. Hang,  P. Yang, {\it $Q$-curvature on a class of
manifolds with dimension at least $5$}. Preprint, arXiv:1411.3926v1, (2014).

\bibitem{Hebey-Robert} E. Hebey, F. Robert,  {\it Compactness and global estimates for the geometric
Paneitz equation in high dimensions,} Electron. Res. Announc. Amer. Math. Soc. {\bf 10} (2004), 135 - 141.

\bibitem{Humbert-Raulot} E. Humbert, S. Raulot,  {\it Positive mass theorem for the Paneitz-Branson
operator,} Calc. Var. and PDEs {\bf 36} (2009), no. 4, 525 - 531.

\bibitem{Khuri-Marques-Schoen} M. A. Khuri, F. C. Marques, R. Shoen, {\it A Compactness Theorem for the Yamabe Problem,} J. Differential Geom. {\bf 81} (2009), 143 - 196.

\bibitem{Jin-Li-Xiong} T. L. Jin, Y. Y. Li, J. G. Xiong,  {\it The Nirenberg problem and its generalizations:
A unified approach,} Preprint, arXiv:1411.5743v1, (2014).

\bibitem{Lee-Parker} J. Lee, T. Parker, {\it The Yamabe problem,} Bull. Amer. Math. Soc. {\bf 17} (1987), 37 - 91.

\bibitem{Li-Zhang} Y. Y. Li, L. Zhang,  {\it Compactness of solutions to the Yamabe problem II,} Calc. Var. and PDEs {\bf 25} (2005), 185 - 237.

\bibitem{Li-Zhang1} Y. Y. Li, L. Zhang,  {\it Compactness of solutions to the Yamabe problem III,} J. Funct. Anal. {\bf 245} (2006) no. 2, 438 - 474.

\bibitem{Li-Zhu} Y.Y. Li, M. Zhu, {\it Yamabe type equations on three dimensional Riemannian manifolds,} Commun. Contemp. Math. {\bf 1} (1999), 1 - 50.

\bibitem{Lin} C.-S. Lin, {\it A classification of solutions of a conformally invariant fourth order equation in $\mathbb{R}^n$,} Comment.
Math. Helv. {\bf 73} (1998), 206 - 231.

\bibitem{Marques} F. C. Marques, {\it A priori estimates for the Yamabe problem in the non-locally
conformally flat case,} J. Differential Geom. {\bf 71} (2005), 315 - 346.

\bibitem{Paneitz} S. Paneitz, {\it A quartic conformally covariant differential operator for arbitrary pseudo-
Riemannian manifolds,} Preprint (1983).

\bibitem{Qing-Raske} J. Qing, D. Raske, {\it Compactness for conformal metrics with Constant $Q$-curvature on locally
conformally flat manifolds,} Calc. Var. {\bf 26} (2006), no. 3, 343 - 356.

\bibitem{Qing-Raske1} J. Qing, D. Raske, {\it On positive solutions to semilinear conformally invariant equa-
tions on locally conformally flat manifolds,} Int. Math. Res. Not. {\bf Art. ID 94172} (2006).

\bibitem{Schoen} R. Schoen, Courses at Stanford University, 1988, and New York University,1989.

\bibitem{Schoen1} R. Schoen, {\it On the number of constant scalar curvature metrics in a conformal
class,} in 'Differential Geometry: A symposium in honor of Manfredo Do Carmo'
(H.B. Lawson and K. Tenenblat, eds.), Wiley, 311 - 320, (1991).

\bibitem{Schoen-Yau} R. Schoen, S.-T. Yau, {\it On the proof of the positive mass conjecture in General
Relativity,} Comm. Math. Phys. {\bf 65} (1979), 45 - 76.

\bibitem{Schoen-Zhang} R. Schoen, L. Zhang, {\it Prescribed scalar curvature on the $n$-sphere,} Calc. Var.
and PDEs {\bf 4} (1996), 1 - 25.

\bibitem{Wei-Zhao} J. Wei, C. Zhao, {\it Non-compactness of the prescribed $Q$-curvature problem
in large dimensions,} Calc. Var.
and PDEs {\bf 46} (2013), 123 - 164.

\end{thebibliography}
 \end{document}